\newtheorem{thm}{Theorem}[section]
\newtheorem{proposition}[thm]{Proposition}
\newtheorem{corollary}[thm]{Corollary}
\newtheorem{lemma}[thm]{Lemma}
\newtheorem{definition}[thm]{Definition}
\newtheorem{example}[thm]{Example}
\newtheorem{remark}[thm]{Remark}
\title{A new approach to the representation theory of the symmetric groups.
IV. $ \Bbb Z_{2}$-graded groups and algebras}
\author{A.~M.~Vershik \and A.~N.~Sergeev}
\address{St.~Petersburg Department of Steklov Institute of Mathematics}
\begin{document}

\maketitle

\begin{abstract}
We start with definitions of the general notions of the theory of $\Bbb Z_{2}$-graded algebras.
Then we consider theory of inductive families of $\Bbb Z_{2}$-graded semisimple finite-dimensional
algebras and its representations in the spirit of approach of the papers \cite{VO,OV} to
representation theory of symmetric groups. The main example is the classical - theory of the
projective representations of symmetric groups.
\end{abstract}

\tableofcontents

\section{Introduction}
In this paper we formulate the main notions of the theory of locally
semisimple $\Bbb Z_{2}$-graded finite-dimensional algebras and their representations.
The main result is a translation of the inductive method
of constructing the representation theory (the method
of Gelfand--Tsetlin algebras) developed in
\cite{OV, VO, V} to the case of $\Bbb Z_{2}$-graded algebras. In particular,
it allows us to use this method for constructing projective
representations of the symmetric groups.

Although the general theory of $\Bbb Z_{2}$-graded semisimple algebras and
their representations was partially described in
\cite{KL, J}, nevertheless we present a systematic treatment of the
main notions of this theory, keeping in mind their application to inductive chains
of semisimple $\Bbb Z_{2}$-graded algebras. In this case, we have a number of interesting
new phenomena, which are related to the fact that
there are two types of simple
$\Bbb Z_{2}$-graded finite-dimensional algebras (algebras of type
$M(n,m)$ and $Q(n)$) and, consequently, two types of
simple modules, and the branching graph of an inductive family
has an additional structure (involution). The central question
concerns the relation between
representations of a graded algebra (group) and representations of its even
part, for example, representations of symmetric and alternating groups.
The answer to this question is comparatively simple: there is a bijection
between these two classes of modules
(see Proposition~\ref{eqv}), and this relation can be used in both directions.
The classical example is the description of representations of
the alternating groups via representations of the symmetric groups.
The converse problem, which is much more difficult,
is to describe projective representations of the symmetric groups. We
investigate this problem below.
Surprisingly enough, the knowledge of ordinary representations of the symmetric group
does not help to solve this problem
and in fact is not used in the related papers.
In other interesting examples, both representations of a
$\Bbb Z_{2}$-graded algebra and representations of its even part are unknown, and
we look for them simultaneously. Moreover, sometimes, having
an explicit description of a graded algebra, we cannot explicitly describe its even part.

We suggest to construct representations of inductive chains
of $\Bbb Z_{2}$-graded groups and algebras using the same techniques
as were used in \cite{OV,VO,V} for describing representations of the symmetric groups;
this is the main goal of the paper. One example that we consider in more
detail is the problem of describing projective representations of the symmetric group.
This problem is reduced to that of describing simple modules of a certain
$\Bbb Z_{2}$-graded algebra which was considered by I.~Schur.

Projective representations of the symmetric groups were studied by
many authors (e.g., \cite{Mo,Se3,N1,N2}). We use this example to illustrate
the new approach to the problem concerning representations of
$\Bbb Z_{2}$-graded chains of algebras. First of all, we find conditions
under which the branching of representations of a chain of semisimple
$\Bbb Z_{2}$-graded algebras is simple. In what follows, the most important part
is played by a generalization of the notion of Gelfand--Tsetlin algebra
for a chain
$\frak A=<A(1)\subseteq A(2)\subseteq \dots \subseteq A(n)>$
of semisimple $\Bbb Z_{2}$-graded algebras. First of all, one should generalize
the notion of center and define the so-called supercenter of a
$\Bbb Z_{2}$-graded algebra. For a chain of $\Bbb Z_{2}$-graded algebras,
the notion of the Gelfand--Tsetlin algebra splits into several notions,
because, in contrast to the nongraded case, the algebra
$SGZ(\frak Y)$ generated by the supercentralizers of the successive subalgebras
(which in what follows will be called the {\it Gelfand--Tsetlin
superalgebra}) does not coincide with the algebra
$SZ(\frak Y)$ generated by the supercenters of the algebras
$A(k)$. In general, the algebras $SGZ(\frak Y)$ are not commutative,
even in the case of a simple branching, but their structure turns out
to be standard: it is the tensor product of a commutative algebra and a
Clifford algebra. Between the algebra
$SGZ(\frak Y)$ and its supercenter
$SZ(\frak Y)$ there is a place for the ordinary Gelfand--Tsetlin algebra
$GZ(\frak Y)$ of the even part of the chain
$\frak Y$. The analysis of representations and characters of these algebras
is the essence of the method.
Similarly to
\cite{OV,VO}, we find the spectrum (the list of irreducible representations)
of the algebra $SGZ(\frak Y)$. This is done by using essentially
the same technique: we
reduce the problem to the description of an analog of the Hecke algebra, which in turn
allows us to describe the irreducible representations of the algebra
$A(n)$. This method is used for describing the projective representations
of the symmetric groups, constructing an analog of Young's forms, bases, etc.
As in \cite{OV,VO}, the so-called strict Young tableaux, which parameterize
a distinguished basis
in representations,
turn out to be the points of the spectrum
of an appropriate Gelfand--Tsetlin algebra, and the irreducible projective
representations of $S_n$ are indexed by the strict diagrams, i.e., the orbits
of admissible substitutions of points of the spectrum. This description
of projective representations is one of the goals of the paper.

In the representation theory of
$ \Bbb Z_{2}$-graded groups and algebras and their branching diagrams,
there arise many combinatorial problems, which apparently have not
yet been studied. Even the quite well-known question about representations
of the alternating group $S_n^+$ regarded as the even part of the
 $\Bbb Z_{2}$-graded group $S_n$ is not sufficiently studied from this point of view.
Note that, for example, a description of the alternating group similar
to the description of Coxeter groups (more exactly, its presentation
as a ``local group'' in the sense of \cite{V1}) was obtained only
quite recently in \cite{VV}. This presentation
should be used for obtaining a direct construction of representations
of $S_n^+$ independently of representations of  $S_n$.
To this end,
one should develop and apply the whole machinery of Gelfand--Tsetlin
algebras. We hope to return to this problem in another paper. By analogy
with the symmetric groups, one should consider projective representations
of other classical Weyl groups, construct a normal form, describe the subalgebra
generated by the supercentralizers, etc. One may also hope that the ideology
of an inductive construction of representation theory will be applicable
also in the theory of superalgebras, in particular, Lie superalgebras.

Let us briefly describe the contents of the paper. The second section
contains the main definitions of the theory of
$\Bbb Z_{2}$-graded associative finite-dimensional semisimple algebras.
In particular, we describe the structure of these algebras and give a description
of simple algebras. Like in the classical case, an important role is played
by the notions of center and centralizer, which in the graded case have
several versions. The third section contains a brief treatment of the theory
of modules over $\Bbb Z_{2}$-graded algebras. Here the main theorem reduces
the description of graded modules over a graded algebra to the description
of nongraded modules over some other algebra. We also present theorems
describing the relations between graded modules and nongraded modules;
graded modules over an algebra and nongraded modules over its
even part. Note that a brief and clear introduction to the
theory of associative superalgebras and modules over them
can be found in \cite{KL}. In the fourth section we introduce one of the main
objects of the paper: inductive families of
$\Bbb Z_{2}$-graded semisimple finite-dimensional algebras. We define
the branching graph of such a family
and prove a theorem characterizing these graphs. Besides,
we show that the branching graphs of simple modules and
the branching graphs of inductive families
of algebras, which {\it a priori} are defined in different ways, coincide.
We also give a criterion for the simplicity of branching, which
is based on the notion of graded centralizer.

The fifth section is devoted to Gelfand--Tsetlin algebras. Here, in contrast to the
nongraded case, there arise several natural analogs of this algebra. The
main result of this section is a theorem describing the relation
between representations of the Gelfand--Tsetlin superalgebra and representations
of the original algebra. In the case of a simple branching, these results
are interpreted in terms of the corresponding branching graph. In particular,
this allows us to define a natural equivalence relation on the path
space of a graded graph.

In the most important last section, we present an application of the theory
developed in the previous sections to the study of projective representations of the
symmetric groups. We explicitly describe the Gelfand--Tsetlin superalgebra
in terms of odd analogs of the Young--Jucys--Murphy elements, which allows us
to describe the spectrum of this superalgebra
following the method of
\cite{OV}.

\section{Main definitions}

In what follows, we assume that the ground field is  $\mathbb C$.

Recall the definition of a  $\Bbb Z_{2}$-graded algebra.

\begin{definition}
A $\Bbb Z_{2}$-graded algebra is an algebra $A$ that has a direct sum decomposition
$A=A_{0}\oplus A_{1}$ such that if $a\in
A_{i}$, $b\in A_{j}$, then $ab\in A_{i+j}$, for $i,j\in\Bbb Z_{2}$.
We will also write the condition $a\in A_{i}$ in the form
$p(a)=i$ and call $p$ the parity function. The even part
$A_0$ is a subalgebra of $A$, and the odd part $A_1$ is an $A_0$-module.
\end{definition}

An equivalent definition is as follows:  a $\Bbb Z_{2}$-graded algebra is an algebra
$A$ with an automorphism
 $\theta$ such that $\theta^2=1$. Here $ A_{0}$ is the eigensubspace corresponding
to the eigenvalue $1$, and
$ A_{1}$ is the eigensubspace corresponding
to the eigenvalue $-1$. We will refer to
$\theta$ as the parity automorphism.

A homomorphism of $\Bbb Z_{2}$-graded algebras is a homomorphism
of ordinary algebras that is also grading-preserving.

{\it An algebra  $A$ regarded without the grading will be denoted by
$|A|$.}

A subalgebra of a $\Bbb Z_{2}$-graded algebra $A$ is a subalgebra
 $B \subset A$ in the ordinary sense that inherits the grading:
  $B=B_{0}+B_{1}$,
$B_0=B\cap A_{0}$, $B_1=B\cap A_{1}$.

In a similar way, a two-sided ideal in a $\Bbb Z_{2}$-graded algebra $A$
is a two-sided ideal $I$ that inherits the grading:
$I=I\cap A_{0}+I\cap A_{1}$. Analogously, we define the notions of left and
right ideals.

As we will see below, the following algebra is useful when considering
  $\Bbb Z_{2}$-graded algebras:
 $$
 A[\varepsilon]=\{a+b\varepsilon\mid a, b\in A, \; \varepsilon^2=1,
 \; \varepsilon a=\theta(a)\varepsilon\}.
 $$

\begin{definition} A $\Bbb Z_{2}$-graded algebra is called
{\it simple} if it contains no two-sided
$\Bbb Z_{2}$-graded ideals other than $0$ and the algebra itself.
\end{definition}

 \begin {definition} A $\Bbb Z_{2}$-graded algebra $A$ is called
  {\it semisimple} if every two-sided graded ideal of $A$ has a
  graded complement, i.e., for every such ideal $I$ there exists a two-sided graded ideal
$I^{\prime}$ such that $ A=I\oplus I^{\prime}$.
 \end {definition}

  \begin{example}\label{M}\footnote{Regarding the algebras below as
  Lie superalgebras, we obtain two principal series of Lie superalgebras:
    $\mathfrak{gl}(n,m)$ and $\mathfrak{q}(n)$.} {\rm For $n,m\ge0$, denote by $M(n,m)$
the set of matrices of the following form:
$$
M(n,m)_{0}=\left\{\begin{pmatrix}A&0\\
0&D\end{pmatrix}\right\},\quad M(n,m)_{1}=\left\{\begin{pmatrix}0&B\\
C&0\end{pmatrix} \right\},
$$
where $A$ is a square matrix of order $n$, $D$ is a square matrix of order
$m$, and $B,C$ are rectangular matrices of orders
$n\times m$ and $m\times n$, respectively. Here the parity automorphism $\theta$
is an inner automorphism: there is
a unique (up to sign) element $J\in M(n,m)_{0}$ satisfying
$J^{2}=1$ such that $\theta(M)=JMJ^{-1}$; namely,
$$
J=\begin{pmatrix}1_{n}&0\\
0&-1_{m}\end{pmatrix},
$$
where $1_k$ is the identity matrix of order $n$.
}\end{example}

It is not difficult to check that the algebra
$M(n,m)$ is simple both as a $\Bbb Z_{2}$-graded algebra and as
a nongraded algebra.

\begin{example} {\rm Denote by $Q(n)$ the set of matrices of the following form:
$$
Q(n)_{\bar0}=\left\{\begin{pmatrix}A&0\\
0&A\end{pmatrix}\right\},\quad Q(n)_{\bar1}=\left\{\begin{pmatrix}0&B\\
B&0\end{pmatrix} \right\},
$$
where $A,B$ are square matrices of order  $n$ and the parity automorphism
is of the same form as in the previous example; note that
$J\notin Q$.}
\end{example}

It is not difficult to check that
$Q(n)$ is simple as a graded algebra and is not simple as a nongraded algebra.

The following well-known theorem describes the structure of simple
$\Bbb Z_{2}$-graded algebras; its proof can be found, e.g., in
\cite{J,KL}.

\begin{thm}\label{structure}Every finite-dimensional simple
$\Bbb Z_{2}$-graded algebra over
$\mathbb C$  is graded isomorphic either to
$M(n,m)$ or to  $Q(n)$, where $n,m$ are arbitrary positive integers.
\end{thm}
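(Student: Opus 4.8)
The plan is to reduce the classification to a statement about the underlying ungraded algebra $|A|$ together with the involutive automorphism $\theta$, and then to treat separately the two ways in which $\theta$ can act on the simple factors of $|A|$. The starting observation is that a subspace $V\subseteq A$ is graded if and only if it is $\theta$-invariant: if $V$ is $\theta$-invariant, then for $v\in V$ both $\tfrac12(v+\theta(v))\in A_0$ and $\tfrac12(v-\theta(v))\in A_1$ lie in $V$, so $V=V\cap A_0\oplus V\cap A_1$. Consequently every $\theta$-invariant two-sided ideal of $|A|$ is a graded ideal of $A$, and graded simplicity of $A$ says exactly that $|A|$ has no proper nonzero $\theta$-stable two-sided ideal.

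First I would show that $|A|$ is semisimple. The Jacobson radical $\mathrm{rad}(|A|)$ is characteristic, hence $\theta$-stable, hence graded; since $A^2$ is a $\theta$-stable nonzero two-sided ideal we get $A^2=A$, so $A$ is not nilpotent and $\mathrm{rad}(|A|)\neq A$. Graded simplicity then forces $\mathrm{rad}(|A|)=0$. By Wedderburn's theorem over $\mathbb{C}$ we have $|A|\cong\bigoplus_i B_i$ with each $B_i\cong M_{n_i}(\mathbb{C})$, and these minimal two-sided ideals are permuted by $\theta$. For any $\theta$-orbit of factors, the sum of the factors in the orbit is $\theta$-stable, hence a graded ideal; by graded simplicity there is exactly one orbit. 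Since $\theta^2=1$, that orbit has size $1$ or $2$, and this dichotomy produces the two cases $M(n,m)$ and $Q(n)$.

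In the size-$1$ case $|A|\cong M_k(\mathbb{C})$ is simple. By the Skolem--Noether theorem every automorphism is inner, so $\theta(a)=JaJ^{-1}$ for some invertible $J$; from $\theta^2=1$ the element $J^2$ is central, say $J^2=\lambda\cdot 1$, and here is where algebraic closedness of $\mathbb{C}$ is used: rescaling $J$ by $\lambda^{-1/2}$ we may assume $J^2=1$. Then $J$ is diagonalizable with eigenvalues $\pm 1$ of multiplicities $n$ and $m$, and in an adapted basis $J=\mathrm{diag}(1_n,-1_m)$. The conditions $JaJ^{-1}=a$ and $JaJ^{-1}=-a$ single out the block-diagonal and block-off-diagonal matrices, respectively, so the grading on $A$ is precisely the one on $M(n,m)$, giving a graded isomorphism $A\cong M(n,m)$.

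In the size-$2$ case $|A|=B\oplus\theta(B)$ with $B\cong M_n(\mathbb{C})$ and $\theta$ interchanging the two factors. Writing $e=1_B$ for the identity of $B$, one checks $e\,\theta(e)=\theta(e)\,e=0$ and $e+\theta(e)=1_A$, so the odd element $\varepsilon=e-\theta(e)$ satisfies $\varepsilon^2=1$. A direct computation then shows that $\varepsilon$ commutes with the even part $A_0=\{u+\theta(u):u\in B\}\cong B\cong M_n(\mathbb{C})$, and that $a\mapsto a\varepsilon$ is a linear bijection $A_0\to A_1$. Hence $A=A_0\oplus A_0\varepsilon$ with $A_0\cong M_n(\mathbb{C})$ and a central odd involution $\varepsilon$; identifying $A_0$ with the equal diagonal blocks and $\varepsilon$ with the odd permutation matrix yields a graded isomorphism $A\cong Q(n)$. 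I expect the main obstacle to be the size-$1$ case: controlling the parity automorphism through Skolem--Noether and normalizing $J^2=1$ (which genuinely requires the field to be algebraically closed) so that the abstract grading is matched with the concrete block grading of $M(n,m)$.
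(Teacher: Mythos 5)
The paper itself does not prove this theorem: it is stated as well known, with the proof delegated to \cite{J,KL}, so there is no internal argument to compare yours against. Your proof is correct, and it is essentially the standard argument that appears in those references: pass to the underlying ungraded algebra $|A|$, observe that graded ideals are exactly the $\theta$-stable ones, deduce semisimplicity of $|A|$ from $\theta$-stability of the radical, and then split according to whether $\theta$ fixes the single Wedderburn factor (Skolem--Noether makes $\theta$ inner, and normalizing $J^2=1$ over the algebraically closed field yields $M(n,m)$) or interchanges two factors (the odd involution $\varepsilon=e-\theta(e)$ commuting with $A_0\cong M_n(\mathbb C)$ yields $Q(n)$). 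Both halves check out, including the computations $\varepsilon^2=1$ and $A=A_0\oplus A_0\varepsilon$ in the swapped case. One small point to flag: your claim that $A^2$ is a \emph{nonzero} $\theta$-stable ideal implicitly assumes $A^2\neq0$, which holds because the algebras here are unital; without some such convention the one-dimensional algebra with zero multiplication would violate the statement itself, so it is worth saying explicitly that $A$ has an identity (as all algebras in this paper do). Also note that your argument naturally produces $M(n,m)$ with $n,m\ge0$ (the case $m=0$ being the trivial grading), which matches Example~\ref{M} rather than the theorem's literal wording ``positive integers''; that is an imprecision in the statement, not in your proof.
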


The previous theorem shows that there are more
$\Bbb Z_{2}$-graded simple finite-dimensional algebras
than nongraded ones. Nevertheless, it turns out that the class of semisimple
finite-dimensional algebras does not depend on the grading
\cite{J,KL}.

\begin{thm}\label{semisimple}

$1)$ A finite-dimensional graded algebra is semisimple as a graded algebra
if and only if it is semisimple as a nongraded algebra.

$2)$ Every finite-dimensional semisimple graded algebra is the sum
of finitely many simple algebras, i.e.,
$M(n,m)$ and $Q(k)$.
\end{thm}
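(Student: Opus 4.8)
The plan is to reduce everything to the behavior of the Jacobson radical $J(|A|)$ of the underlying nongraded algebra. The crucial observation is that $J(|A|)$ is automatically a graded ideal. Indeed, the Jacobson radical is preserved by every algebra automorphism, and in particular by the parity automorphism $\theta$; and any $\theta$-invariant two-sided ideal $I$ is graded, since each $x\in I$ splits as $x=\tfrac12(x+\theta(x))+\tfrac12(x-\theta(x))$ with the two summands lying in $I\cap A_0$ and $I\cap A_1$ respectively. I will freely use the two standard facts about finite-dimensional algebras over a field: $J(|A|)$ is nilpotent, and $|A|$ is semisimple (nongraded) if and only if $J(|A|)=0$.

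For the forward implication of part $1)$, suppose $|A|$ is semisimple. Write $|A|=\bigoplus_i B_i$ as the direct sum of its minimal two-sided ideals (the Wedderburn components). Since $\theta$ is an automorphism, it permutes the $B_i$. Now let $I$ be any graded two-sided ideal. As a two-sided ideal of a semisimple algebra, it is the sum of those $B_i$ it contains, say $I=\bigoplus_{i\in S}B_i$; because $I$ is graded it is $\theta$-invariant, so $S$ is stable under the permutation induced by $\theta$. Hence the complement $I'=\bigoplus_{i\notin S}B_i$ is also $\theta$-invariant, and therefore graded, and $A=I\oplus I'$. Thus $A$ is semisimple as a graded algebra.

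For the reverse implication, suppose $A$ is graded-semisimple. By the observation above, $J(|A|)$ is a graded ideal, so by hypothesis it admits a graded complementary ideal $C$ with $A=J(|A|)\oplus C$. A direct-sum decomposition of a unital ring into two-sided ideals yields a central idempotent $e$ with $J(|A|)=eA$; but $e\in J(|A|)$ and $J(|A|)$ is nilpotent, so $e=e^n=0$ for large $n$, whence $J(|A|)=0$ and $|A|$ is semisimple. This proves part $1)$.

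For part $2)$, let $A$ be semisimple as a graded algebra. Splitting off minimal graded two-sided ideals one at a time (each such ideal is a direct summand by the definition of graded-semisimplicity, hence a unital graded subalgebra with central idempotent unit) and inducting on dimension, we obtain a decomposition $A=\bigoplus_k C_k$ into simple graded algebras. Applying Theorem~\ref{structure} to each summand identifies $C_k$ with some $M(n,m)$ or $Q(p)$, which is the claim. Concretely, this matches the Wedderburn picture of part $1)$: a $\theta$-fixed block $B_i\cong M_N(\mathbb C)$ carries the inner grading $\theta=\operatorname{Ad}(J)$ and is a copy of $M(n,m)$ with $n+m=N$, while a pair $B_i,B_j\cong M_p(\mathbb C)$ interchanged by $\theta$ combines into a copy of $Q(p)$. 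I expect the main obstacle to be the reverse implication of part $1)$: the bookkeeping directions are routine, but turning graded-semisimplicity into the vanishing of $J(|A|)$ requires exactly the radical-is-graded lemma together with the nilpotency argument, and it is here that the definition via graded complements does real work.
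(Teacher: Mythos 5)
Your argument is correct, but there is no proof in the paper to compare it with: the authors state Theorem~\ref{semisimple} as a known result and refer to Jozefiak and Kleshchev for its proof, so your write-up is necessarily an independent, self-contained argument. It is a sound one. The two pillars are exactly right: the observation that $J(|A|)$ is $\theta$-invariant (hence graded, by the eigenspace splitting $x=\tfrac12(x+\theta(x))+\tfrac12(x-\theta(x))$) turns graded-semisimplicity into $J(|A|)=0$ via the nilpotent-central-idempotent trick, and the permutation of Wedderburn blocks by $\theta$ gives the converse. One point you should spell out in part $2)$: your induction requires that the graded complement $I'$ of a minimal graded ideal is itself graded-semisimple, which is not literally the definition (graded-semisimplicity as defined is a property of the ambient algebra $A$, not automatically inherited by summands). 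This is routine --- any graded ideal $K$ of $I'$ satisfies $KI\subseteq I\cap I'=0$, so $K$ is a graded ideal of $A$, and the modular law applied to its graded complement in $A$ produces a graded complement inside $I'$ --- but it deserves a sentence. Alternatively, your closing remark already gives a cleaner route to part $2)$ that avoids the induction entirely: by part $1)$ decompose $|A|$ into Wedderburn blocks, group them into $\theta$-orbits, and note that each orbit-sum is a graded ideal with no proper nonzero graded subideals (a fixed block has no nonzero proper ideals at all; in a swapped pair the only candidates are the two blocks, neither $\theta$-invariant); Theorem~\ref{structure} then identifies fixed blocks as algebras of type $M$ and swapped pairs as algebras of type $Q$, the latter because $M(n,m)$ is simple as a nongraded algebra while a swapped pair is not.
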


\begin{corollary} The even part of a graded semisimple finite-dimensional
algebra is semisimple.
\end{corollary}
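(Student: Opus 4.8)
The plan is to reduce the statement to the two simple building blocks supplied by Theorem~\ref{semisimple} and then to read off the even parts directly from their matrix descriptions. By part $2)$ of that theorem, a finite-dimensional semisimple graded algebra $A$ is a direct sum of simple graded algebras, each graded isomorphic to some $M(n,m)$ or $Q(k)$. Since the grading of a direct sum is the direct sum of the gradings, the even part $A_0$ is the direct sum of the even parts of these simple summands. A direct sum of semisimple algebras is again semisimple, so it suffices to check that the even part of each of $M(n,m)$ and $Q(k)$ is semisimple as an ordinary (nongraded) algebra.

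For $M(n,m)$, the even part consists by definition of the block-diagonal matrices $\begin{pmatrix} A & 0 \\ 0 & D \end{pmatrix}$ with $A$ of order $n$ and $D$ of order $m$; sending such a matrix to the pair $(A,D)$ exhibits an isomorphism $M(n,m)_0 \cong M_n(\mathbb C) \oplus M_m(\mathbb C)$, a direct sum of two full matrix algebras and hence semisimple. For $Q(n)$, the even part consists of the matrices $\begin{pmatrix} A & 0 \\ 0 & A \end{pmatrix}$ with $A$ of order $n$; sending such a matrix to $A$ gives an isomorphism $Q(n)_0 \cong M_n(\mathbb C)$, which is simple and in particular semisimple. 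Combining these two computations with the reduction above finishes the argument.

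Essentially all of the content is already packaged in the structure theorem, so I do not expect a genuine obstacle once the classification of simple graded algebras is granted. The only point deserving a word of care is that ``semisimple'' for $A_0$ is meant here in the ordinary ungraded sense: the even part sits entirely in degree zero and so carries only the trivial grading, and it is precisely the ordinary semisimplicity of the matrix algebras $M_n(\mathbb C)$ and $M_n(\mathbb C)\oplus M_m(\mathbb C)$ that is being invoked.
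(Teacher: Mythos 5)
Your proof is correct and takes exactly the route the paper intends: the corollary appears there without proof as an immediate consequence of Theorem~\ref{semisimple}, and your reduction to the simple components together with the explicit identifications $M(n,m)_0 \cong M_n(\mathbb C)\oplus M_m(\mathbb C)$ and $Q(n)_0 \cong M_n(\mathbb C)$ is precisely the verification the paper leaves to the reader.
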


\begin{corollary} The number of simple components of a semisimple
finite-dimensional graded algebra is equal to the dimension of the even
part of its ordinary center, and the number of simple components of type $Q$
is equal to the dimension of the odd
part of its ordinary center.
\end{corollary}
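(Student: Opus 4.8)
The plan is to reduce the statement to a computation of ordinary centers of the two types of simple graded algebras and then add up the contributions. First I would invoke Theorem~\ref{semisimple}$(2)$ to write $A = A^{(1)}\oplus\cdots\oplus A^{(r)}$ as a direct sum of graded ideals, each graded isomorphic to some $M(n,m)$ or $Q(k)$; suppose $p$ of them are of type $M$ and $q$ of type $Q$, so that the total number of simple components is $p+q$. Since the ordinary center of a direct sum of algebras is the direct sum of the ordinary centers, and since the grading of $A$ restricts to each ideal $A^{(i)}$, the center splits as a graded direct sum, with $Z(|A|)_0 = \bigoplus_i Z(|A^{(i)}|)_0$ and $Z(|A|)_1 = \bigoplus_i Z(|A^{(i)}|)_1$. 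Thus it suffices to determine the even and odd parts of the ordinary center of each simple type separately.

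Next I would treat the two cases. As a nongraded algebra $|M(n,m)|$ is the full matrix algebra $M_{n+m}(\mathbb C)$, so its ordinary center is one-dimensional, spanned by the identity; since the identity is even, the even part of the center is one-dimensional and the odd part vanishes. For $Q(n)$, conjugation by $\begin{pmatrix}1_n & 1_n\\ 1_n & -1_n\end{pmatrix}$ identifies $|Q(n)|$ with $M_n(\mathbb C)\oplus M_n(\mathbb C)$, so its ordinary center is two-dimensional. I would then exhibit two linearly independent central elements spanning it: the identity, which is even, and $\begin{pmatrix}0 & 1_n\\ 1_n & 0\end{pmatrix}$, which one checks commutes with every element of $Q(n)$ and which lies in $Q(n)_1$, hence is odd. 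So for $Q(n)$ both the even and the odd parts of the ordinary center are one-dimensional.

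Finally I would sum the contributions: the even part of $Z(|A|)$ has dimension $p+q$, equal to the number of simple components, while the odd part has dimension $q$, equal to the number of components of type $Q$. The only genuine computation is locating the odd central element of $Q(n)$; everything else is bookkeeping. I expect the main (modest) obstacle to be checking carefully that the grading on $A$ is compatible with the decomposition of the center as a direct sum, i.e., that an element of $Z(|A|)$ is even precisely when each of its components in the $Z(|A^{(i)}|)$ is even. This, however, follows at once from the fact that the decomposition of $A$ in Theorem~\ref{semisimple}$(2)$ is one of graded ideals, so that the grading of $A$ is the direct sum of the gradings of the components.
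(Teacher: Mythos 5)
Your proof is correct and follows exactly the route the paper intends: the corollary is stated there without proof as an immediate consequence of Theorem~\ref{semisimple}(2), and your argument supplies precisely the expected details, namely the graded decomposition into simple ideals, the one-dimensional even center of $|M(n,m)|$, and the two-dimensional center of $|Q(n)|\cong M_n(\mathbb C)\oplus M_n(\mathbb C)$ spanned by the identity and the odd element $\begin{pmatrix}0&1_n\\ 1_n&0\end{pmatrix}$.
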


\begin{remark}\label{chet}{\rm It follows from the previous theorem that every
finite-di\-men\-sion\-al semisimple graded algebra $A$ admits a unique
decomposition of the form
$A=A_{M}\oplus A_{Q}$, where
$A_{M}$  is the sum of all simple components of type $M$ and
$A_{Q}$ is the sum of all simple components of type $Q$.
Further, as follows from Example~\ref{M}, there exists an element $J_{A}\in A_{M}$ such that
$\theta(a)=J_{A}aJ_{A}^{-1}$, $a\in A_{M}$.}
\end{remark}

One of the most important notions of the classical theory of algebras
is that of the center of an algebra. In the graded case, a natural analog
of the center is the following algebra, which arises from considering
the center of the algebra $A[\varepsilon]$.

\begin{definition} The center of a graded algebra $A$ is the algebra that is the sum
of the even part
of the center of the nongraded algebra and the even part of the twisted center:
$$
Z(A)=Z(|A|)_{0}+Z^{\theta}(|A|)_{0},
$$
where $Z(|A|)_{0}=\{a\in A_{0}\mid ab=ba\text{ for every } b\in A\}$
is the even part of the ordinary center
and $Z^{\theta}(|A|)_{0}=\{a\in A_{0}\mid ab=\theta(b)a\text{ for every } b\in A\}$
is the even part of the twisted center.
\end{definition}

\begin{definition} The graded centralizer of a subalgebra $B$ in a graded  algebra
$A$ is the algebra that is the sum
of the even part of the nongraded centralizer and the
even part of the twisted centralizer:
$$
Z(A,B)=Z_{0}(|A|,|B|)+Z^{\theta}_{0}(|A|,|B|),
$$
where  $Z_{0}(|A|,|B|)=\{a\in A_{0}\mid ab=ba\text{ for every } b\in B\}$
is the even part of the ordinary centralizer and $Z_{0}^{\theta}(|A|,|B|)
=\{a\in A_{0}\mid ab=\theta(b)a\text{ for every } b\in B\}$
is the even part of the twisted centralizer.
\end{definition}

Note that if the algebra $A$ is finite-dimensional and semisimple, then
its graded center coincides with the center of its even part:
$Z(A)=Z(A_{0})$. This can easily be checked by reducing to the case
of a simple algebra. A similar assertion is also true for the centralizer,
see Lemma~\ref{centraliser}.

The following notion, which is used in the theory of Lie superalgebras, turns
out to be useful.

 \begin{definition}\label{super}
The supercentralizer of a subalgebra
$B\subset A$ is the algebra
$$
SZ(A,B)=
\{a\in A\mid ab=(-1)^{p(a)p(b)}ba\text{ for every }b\in B\}.
$$
In particular, the supercentralizer  $SZ(A,A)$ is called the supercenter of
$A$ and is denoted by $SZ(A)$.
\end{definition}

Note that for a semisimple finite-dimensional algebra, the supercenter
coincides with the even part of the ordinary center:
$SZ(A)= Z(|A|)_{0}$.

We will also need the notion of a
$\Bbb Z_{2}$-graded group.

\begin{definition}
A finite group $G$ is called $\Bbb Z_{2}$-graded if it contains
a distinguished normal subgroup
$G_{0}$ of index $2$, and the decomposition into the cosets of
$G_{0}$ is the decomposition into the even and odd parts:
$G=G_{0} \bigcup G_{1}$.
\end{definition}

The group algebra of a  $\Bbb Z_{2}$-graded group has a natural
$\Bbb Z_{2}$-grading. It is not difficult to prove the following fact.

\begin{thm} The group algebra of a finite graded group with the natural
$\Bbb Z_{2}$-grading is a semisimple
$\Bbb Z_{2}$-graded algebra.
\end{thm}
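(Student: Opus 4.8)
The plan is to reduce graded semisimplicity to ordinary semisimplicity and then invoke the classical Maschke theorem. First I would make the grading explicit: writing $G=G_0\cup G_1$ with $G_0$ a normal subgroup of index $2$, one has the coset relations $G_iG_j\subseteq G_{i+j}$ for $i,j\in\Bbb Z_2$ (immediate from the fact that $G_0$ is a subgroup and $G_1$ is its nontrivial coset). Hence, setting $\mathbb C[G]_0=\mathrm{span}_{\mathbb C}\,G_0$ and $\mathbb C[G]_1=\mathrm{span}_{\mathbb C}\,G_1$, and extending the rule $g\mapsto(-1)^{p(g)}g$ linearly to an automorphism $\theta$ with $\theta^2=1$, we obtain a genuine $\Bbb Z_2$-graded algebra structure on $\mathbb C[G]$. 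This verification is the only routine point and presents no real difficulty.

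Next, since the ground field is $\mathbb C$ and $G$ is finite, the group algebra $\mathbb C[G]=|\mathbb C[G]|$ is semisimple as an ordinary (nongraded) algebra by Maschke's theorem. I would then apply part $1)$ of Theorem~\ref{semisimple}, which asserts that a finite-dimensional graded algebra is semisimple as a graded algebra precisely when it is semisimple as a nongraded algebra. Combining these two facts immediately yields that $\mathbb C[G]$ is semisimple as a $\Bbb Z_2$-graded algebra, as required.

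For completeness I would record a direct argument avoiding the cited theorem, since it exposes what is really going on. By Maschke, $|\mathbb C[G]|=\bigoplus_i M_i$ decomposes into its minimal two-sided ideals (simple components). The parity automorphism $\theta$, being an algebra automorphism, permutes this finite family of components. A two-sided graded ideal $I$ is exactly a $\theta$-invariant two-sided ideal, i.e.\ a sum $\bigoplus_{i\in S}M_i$ over a $\theta$-stable subset $S$ of indices; its complement $\bigoplus_{i\notin S}M_i$ is then also $\theta$-stable, hence a graded two-sided ideal, providing the desired graded complement.

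The main, and rather mild, obstacle is not any single step but recognizing that no new averaging argument is needed beyond classical semisimplicity: the $\Bbb Z_2$-action merely permutes the simple components, so ordinary semisimplicity automatically refines to graded semisimplicity. Once Theorem~\ref{semisimple}$(1)$ is available, the proof is essentially one line.
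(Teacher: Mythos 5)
Your proof is correct, but it cannot coincide with ``the paper's approach'' for a simple reason: the paper gives no proof of this theorem at all --- it is stated as ``not difficult to prove'' and immediately referred to \cite{NS}, where it is said to be a special case of a more general result. Your argument therefore fills a gap the authors left to the reader, and it does so in two legitimate ways. The short route (Maschke's theorem for $\mathbb C[G]$ combined with part 1) of Theorem~\ref{semisimple}, the equivalence of graded and nongraded semisimplicity) is presumably the one-line deduction the authors had in mind, though it leans on a structure theorem they themselves only cite from \cite{J,KL}. Your direct argument is the more valuable one: graded two-sided ideals are precisely the $\theta$-invariant two-sided ideals, every two-sided ideal of the semisimple algebra $|\mathbb C[G]|$ is a sum of minimal two-sided ideals, and the automorphism $\theta$ permutes these finitely many components, so a $\theta$-stable subset of components has a $\theta$-stable complementary subset; this produces a graded complement exactly as the paper's definition of graded semisimplicity demands. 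The only point you assert without verification is the equivalence ``graded ideal $\Leftrightarrow$ $\theta$-invariant ideal''; it deserves one line, since it uses $\mathrm{char}\,\mathbb C\neq 2$: if $\theta(I)=I$ and $a\in I$, then $a_0=\frac12\left(a+\theta(a)\right)\in I$ and $a_1=\frac12\left(a-\theta(a)\right)\in I$, so $I$ inherits the grading, and the converse is immediate. With that line added, your second argument is a complete, self-contained proof that avoids the cited structure theory altogether.
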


This theorem is a special case of a more general result proved in
\cite{NS}.

For brevity, in what follows we often use the term ``a graded algebra (group,
module)'' instead of
``a $\Bbb Z_{2}$-graded algebra (group,
module).'' Here are the main examples of graded groups and algebras.

\medskip
 $1)$ The symmetric group $S_n$ in which the parity of an element
 $g \in S_n$ is defined as the parity of the permutation $g$, and
 the group algebra ${\Bbb C}[S_n]$ endowed with the corresponding grading.

\medskip
 $2)$ The $\Bbb C$-algebra  $\frak A_{n}$ corresponding to projective
 representations of the symmetric group $S_n$; it is generated by elements
 $\tau_{1},\dots,\tau_{n}$ satisfying the relations
$$
\tau_{k}^2=1,\quad(\tau_{k}\tau_{k+1})^3=1,\quad(\tau_{k}\tau_{l})^2=-1
\;\text{ if }\; \mid k-l\mid >1.
$$
The graded algebra $\frak A_{n}$ is not the group algebra of any graded group.

\medskip
 $3)$ The semidirect product $S_{n}\ltimes C_{n}$ of the Clifford algebra
 $C_n$ (with the natural action of the symmetric group) and the symmetric group
 $S_n$;
 here all generators
 of the Clifford algebra are assumed to be odd, and all elements of the symmetric
 group are assumed to be even. Note that we have the relation
 $$
  S_{n}\ltimes C_{n}=\frak A_{n}\otimes C_{n},
 $$
where the tensor product in the right-hand side is understood as the
tensor product of
graded algebras. The corresponding definition is as follows.

\smallskip
\begin{definition}
The tensor product $A\otimes B$ of graded algebras
$A$ and $B$ is the graded algebra that coincides as a vector space
with the ordinary tensor product of the algebras $A$ and $B$, with
the parity automorphism and multiplication defined by the following rules:
$$
\theta_{A\otimes B}(a\otimes b)=\theta_{A}(a)\otimes\theta_{B}(b),\quad
(a\otimes b)(a^{\prime}\otimes b^{\prime})=(-1)^{ji}aa^{\prime}\otimes bb^{\prime},
$$
where $a^{\prime}\in A_{i}$, $b\in B_{j}$.
\end{definition}

Note that the tensor product of graded algebras, regarded
as a nongraded algebra, is not, in general, isomorphic to their tensor product
as ordinary algebras.

\begin{lemma} The tensor product of semisimple finite-dimensional associative graded
algebras is semisimple.
\begin{proof}
The proof reduces to the case of a simple graded algebra. In this case,
it is easy to check the equalities
$$
M(n,m)\otimes M(n^{\prime},m^{\prime})=M(nn^{\prime}+mm^{\prime},nm^{\prime}+nm^{\prime}),
$$
$$
M(n,m)\otimes Q(l)=Q((m+n)l),\quad Q(n)\otimes Q(m)=M(nm,nm).
$$
\end{proof}
\end{lemma}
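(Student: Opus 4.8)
The plan is to reduce to the case of simple factors and then verify the three displayed product formulas. First I would invoke Theorem~\ref{semisimple}: a finite-dimensional semisimple graded algebra is a finite direct sum of simple graded algebras, each of type $M(n,m)$ or $Q(n)$. The graded tensor product distributes over such direct sums — if $A=\bigoplus_i A^{(i)}$ and $B=\bigoplus_j B^{(j)}$, then $A\otimes B=\bigoplus_{i,j}A^{(i)}\otimes B^{(j)}$ and each summand is a graded ideal — so it suffices to show that $A^{(i)}\otimes B^{(j)}$ is simple whenever both factors are. Everything thus comes down to the three cases $M\otimes M$, $M\otimes Q$, and $Q\otimes Q$.

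The cleanest route is the module picture. Realize $M(n,m)$ as $\mathrm{End}(V)$, where $V=V_0\oplus V_1$ is a super vector space with $\dim V_0=n$ and $\dim V_1=m$. The structural fact I would use is that the graded tensor product of endomorphism algebras is the endomorphism algebra of the graded tensor product of the underlying spaces: $\mathrm{End}(V)\otimes\mathrm{End}(W)\cong\mathrm{End}(V\otimes W)$, the isomorphism sending $f\otimes g$ to the operator $v\otimes w\mapsto(-1)^{p(g)p(v)}f(v)\otimes g(w)$. Since $(V\otimes W)_0=(V_0\otimes W_0)\oplus(V_1\otimes W_1)$ has dimension $nn'+mm'$ while $(V\otimes W)_1$ has dimension $nm'+mn'$, this gives the first formula $M(n,m)\otimes M(n',m')\cong M(nn'+mm',\,nm'+mn')$, and simplicity is inherited from $M$.

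For the other two cases I would realize the queer algebra through the Clifford algebra $C_1=\mathbb C[c]/(c^2-1)$ with $c$ odd. A direct check on the matrix descriptions gives $Q(n)\cong C_1\otimes M(n,0)$, with $M(n,0)$ the purely even matrix algebra. Letting $C_1$ act on $V\otimes C_1$ by right multiplication produces an odd involution; the image of $\mathrm{End}(V)\otimes C_1$ lies visibly in the centralizer of this involution, which is exactly $Q(n+m)$, the action is faithful, and comparing dimensions (as $2(n+m)^2$ is never a perfect square, type $M$ is excluded) shows the map is an isomorphism $M(n,m)\otimes C_1\cong Q(n+m)$. Granting the associativity and (Koszul-sign) symmetry of the graded tensor product, the case $M\otimes Q$ then reads $M(n,m)\otimes Q(l)\cong M(n,m)\otimes C_1\otimes M(l,0)\cong Q(n+m)\otimes M(l,0)\cong Q((n+m)l)$. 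For $Q\otimes Q$ everything reduces to the single computation $C_1\otimes C_1\cong M(1,1)$ — the odd generators $c\otimes1$ and $1\otimes c$ anticommute and square to $1$, so $C_1\otimes C_1$ is the Clifford algebra on two generators, which is $M(1,1)$ — whence $Q(n)\otimes Q(m)\cong(C_1\otimes C_1)\otimes M(nm,0)\cong M(1,1)\otimes M(nm,0)\cong M(nm,nm)$.

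The obstacle I expect is bookkeeping rather than conceptual: each identification must be checked to be an isomorphism of graded algebras, so the sign $(-1)^{ji}$ in the product on $A\otimes B$ has to be tracked through every map to confirm multiplicativity and grading-preservation. In particular, the symmetry isomorphism used to move the second $C_1$ factor past $M(n,0)$ in the $Q\otimes Q$ case carries Koszul signs, and one must verify that they cancel; and the anticommutation of the odd element $c$ with the odd part of $\mathrm{End}(V)$ is precisely the mechanism that converts type $M$ into type $Q$. Once the sign conventions are fixed, each step is a routine verification on generators, and simplicity of every product follows because it is explicitly exhibited as an algebra of type $M$ or $Q$.
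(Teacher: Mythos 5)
Your proposal is correct and follows essentially the same route as the paper: reduction to the simple graded summands followed by verification of the three product formulas for $M\otimes M$, $M\otimes Q$, and $Q\otimes Q$; the paper merely asserts these identities are ``easy to check,'' while you supply the actual verification via $\mathrm{End}(V)\otimes\mathrm{End}(W)\cong\mathrm{End}(V\otimes W)$ and the factorization $Q(n)\cong C_1\otimes M(n,0)$. Note in passing that your first formula $M(nn'+mm',\,nm'+mn')$ silently corrects a typo in the paper, whose displayed equality reads $nm'+nm'$ in the second argument.
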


\section{$\Bbb Z_{2}$-graded modules}

\def\GMod{{\rm GMod}}
\def\Gdim{\operatorname{Gdim}}

We will consider the category $\GMod$ of graded modules
over a $\Bbb Z_{2}$-graded algebra $A$.

\begin{definition}
A graded module over a graded algebra $A$ is an $A$-module
$V$ that has a decomposition
$V=V_{0}\oplus V_{1}$ such that the corresponding grading is preserved by
the action of $A$, i.e.,
$A_{i}V_{j}\subset V_{i+j}$, where $i,j\in\Bbb Z_{2}$.
\end{definition}

Recall also that the graded dimension
$\Gdim V$ of a graded module $V$
is the pair $(\dim V_{0}, \dim V_{1})$.
The set of morphisms in the category
$\GMod$ is the set of homomorphisms $ f:V\rightarrow U$
of ordinary modules that preserve the grading, i.e.,
satisfy $f(V_{i})\subset U_{i}$, $i\in \Bbb Z_{2}$.

On the category $\GMod$ there exists a natural functor of changing the parity:
$V\rightarrow P(V)$, where $P(V)_{0}=V_{\bar1}$, $P(V)_{1}=V_{\bar0}$,
and the new action coincides with the old one.

\begin{definition}
A graded module $V$ is called simple or irreducible if it has no
graded submodules other than the zero one and $V$ itself.
\end{definition}

\smallskip
Examples.

\medskip
 1) The algebra $M(n,m)$ has
 two nonisomorphic
 graded simple modules
 $V$ and $ P(V)$. Note that the graded dimension of one of them equals
 $(n,m)$, while the graded dimension of the other one equals $(m,n)$. It is easy to check that both modules
 are simple and isomorphic as nongraded modules. Such modules will be called
 a pair of {\it antipodal  modules} of type $M$.
 Note that the algebra $M(n,m)$ is the algebra of all graded endomorphisms
 of the $\Bbb Z_2$-graded vector spaces $V$ and $P(V)$.

\medskip
 2) The algebra $Q(n)$ has only one simple graded module
  $ V=P(V)$, where $V$ is the standard representation and
$\Gdim V=(n,n)$. It is easy to check that $V$ is reducible
as a nongraded module and that it is the direct sum of two nonisomorphic simple
nongraded modules. Irreducible modules of this type will be called
modules of type $Q$.

\smallskip
\begin{lemma}\label{centraliser} If $A $ is a semisimple finite-dimensional
algebra and $B$ is a semisimple subalgebra of $A$, then
$$
Z(A,B)=Z(A_{0},B_{0}).
$$
\begin{proof}
Clearly, $Z(A,B)=Z_{0}(|A|,|B|)+ Z_{0}^{\theta}(|A|,|B|)\subset Z(A_{0},B_{0})$.
Thus it suffices to show that both algebras have the same
dimension over $\mathbb C$. Further, it is not difficult to check that
we may restrict ourselves to the case where $A$ is a simple graded algebra,
i.e., $A=M(n,m)$ or $A=Q(n)$. Consider the first case. Let
$V$ be one of the irreducible modules over $A$. We may restrict ourselves
to the case where $V$ is the sum of $B$-modules
$U,P(U)$ for some irreducible $B$-module $U$. The following
cases are possible: 1) $B=B_{0}$;
 2) $B\ne B_{0}$.

In the first case, $Z_{0}(A,B)=Z_{0}^{\theta}(|A|,|B|)=Z(A_{0},B_{0})$.
In the second case, we have two possibilities:  $U\ne P(U)$ and  $U=P(U)$.

In the first one, $V=U^k\oplus P(U)^l$ for some $k,l$, whence
$\dim(Z_{0}(|A|,|B|)+Z_{0}^{\theta}(|A|,|B|))=2k^2+2l^2=\dim Z(A_{0},B_{0})$.
In the second one,  $V=U^k$ for some $k$, whence
$\dim(Z_{0}(|A|,|B|)+Z_{0}^{\theta}(|A|,|B|))=4k^2=\dim Z(A_{0},B_{0})$.
 \end{proof}
\end{lemma}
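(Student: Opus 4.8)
The plan is to establish the nontrivial inclusion $Z(A_0,B_0)\subseteq Z(A,B)$ by a dimension count, the opposite inclusion being immediate: any $a\in Z_0(|A|,|B|)$ commutes with $B\supseteq B_0$, and any $a\in Z_0^{\theta}(|A|,|B|)$ satisfies $ab=\theta(b)a=ba$ for $b\in B_0$ since $\theta$ fixes the even part; as the author notes, this already gives $Z(A,B)\subseteq Z(A_0,B_0)$. Being an inclusion of finite-dimensional subspaces of $A_0$, it suffices to prove $\dim Z(A,B)=\dim Z(A_0,B_0)$. First I would reduce to $A$ simple. Writing $A=\bigoplus_i A^{(i)}$ as a sum of simple graded ideals (Theorem~\ref{semisimple}) and letting $B^{(i)}$ be the projection of $B$ to $A^{(i)}$, an element of $A_0$ ordinary- (resp.\ twisted-) centralizes $B$ exactly when each component does so for $B^{(i)}$, because the conditions decouple across the mutually commuting ideals. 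Since each $B^{(i)}$ is a semisimple graded subalgebra of $A^{(i)}$, both $Z(A,B)$ and $Z(A_0,B_0)$ split as direct sums over $i$, reducing everything to $A=M(n,m)$ or $A=Q(n)$.

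Next I would realize both sides as endomorphism algebras of a defining graded $A$-module $V$. For $A=M(n,m)$ we have $A_0=\mathrm{End}(V_0)\oplus\mathrm{End}(V_1)$, hence $Z(A_0,B_0)=\mathrm{End}_{B_0}(V_0)\oplus\mathrm{End}_{B_0}(V_1)$, while $Z_0(|A|,|B|)$ is the even part of the ordinary centralizer $\mathrm{End}_{|B|}(V)$. Decomposing $V$ as a graded $B$-module and using the parity functor $P$, one reduces to a single isotypic block $V\cong U^{k}\oplus P(U)^{l}$ for a simple graded $B$-module $U$. The essential input is the graded form of Schur's lemma distinguishing the two types of Example: for a type $M$ simple module the graded endomorphism algebra is $\mathbb C$ and $\mathrm{Hom}_{\mathrm{gr}}(U,P(U))=0$ when $U\not\cong P(U)$, whereas for a type $Q$ module ($U\cong P(U)$) the ordinary endomorphism algebra is two-dimensional, with one-dimensional even part spanned by the identity and one-dimensional odd part spanned by an intertwiner $\xi$ with $\xi^{2}$ a nonzero scalar.

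Finally I would handle the twisted centralizer and run the case analysis. If $B=B_0$ then $\theta$ is trivial on $B$, so $Z_0^{\theta}(|A|,|B|)=Z_0(|A|,|B|)=Z(A_0,B_0)$ and equality is immediate; if $B\neq B_0$, the odd part of $B$ makes the sum direct, so $\dim Z(A,B)=\dim Z_0(|A|,|B|)+\dim Z_0^{\theta}(|A|,|B|)$. For $A=M(n,m)$ the parity automorphism is inner, $\theta(x)=J_{A}xJ_{A}^{-1}$ with $J_{A}\in A_0$ (Remark~\ref{chet}), whence $Z_0^{\theta}(|A|,|B|)=J_{A}\,Z_0(|A|,|B|)$ and in particular $\dim Z_0^{\theta}=\dim Z_0$; evaluating $\dim Z_0$ from the graded Schur data in the subcases $U\not\cong P(U)$ and $U\cong P(U)$ and comparing with $\dim\mathrm{End}_{B_0}(V_0)+\dim\mathrm{End}_{B_0}(V_1)$ then yields the desired equality. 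The case $A=Q(n)$ is analogous but requires separate treatment, since there $\theta$ is \emph{not} inner ($J\notin Q$) and $A_0$ acts on $V_0$ and $V_1$ through a single copy of a matrix algebra; here I would relate $Z_0^{\theta}$ to $Z_0$ through the odd intertwiner generating $Q(n)_1$ rather than through conjugation by $J_A$. I expect the main obstacle to be precisely this bookkeeping—tracking, in each of the four type-by-parity combinations, how the even centralizer and the twisted even centralizer sit inside $\mathrm{End}_{|B|}(V)$ and matching the resulting dimensions against $Z(A_0,B_0)$—rather than any single deep idea.
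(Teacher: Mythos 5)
Your proposal follows essentially the same route as the paper's proof: the easy inclusion, reduction to a dimension count, reduction to simple $A$ via the decomposition into simple graded ideals, restriction to a single $B$-isotypic block $V\cong U^{k}\oplus P(U)^{l}$, and the identical case analysis ($B=B_{0}$ or not; $U\cong P(U)$ or not) powered by the graded form of Schur's lemma. If anything, your plan is slightly more complete than the paper's own argument, which after the words ``Consider the first case'' treats only $A=M(n,m)$ and never returns to $A=Q(n)$, and which leaves implicit both the directness of the sum $Z_{0}(|A|,|B|)+Z_{0}^{\theta}(|A|,|B|)$ and the mechanism $Z_{0}^{\theta}(|A|,|B|)=J_{A}\,Z_{0}(|A|,|B|)$ that you spell out.
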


Let $\theta$ be an automorphism of an algebra $A$.
Then on the category of nongraded modules we have the natural
functor $V\rightarrow V^{\theta}$, where $V^{\theta}=V$
and the new action is defined by the formula
$a\star v=\theta(a)v$.

\def\Rep{{\rm Rep}}

\begin{corollary}
Let $A$ be a graded algebra, $\theta$ be the parity automorphism of $A$, and
$$
\Rep(|A|)=(E_{1},E^{\theta}_{1},\dots,E_{r},E^{\theta}_{r}, F_{1}=F^{\theta}_{1},\dots,F_{s}=F^{\theta}_{s})
$$
be the complete set of pairwise nonisomorphic nongraded modules. Then
$$
\Rep(A)=(F_{1},P(F_{1}),\dots,F_{s},P(F_{s}),E_{1}\oplus E^{\theta}_{1},\dots,E_{r}\oplus E^{\theta}_{r} )
$$
is the complete set of pairwise nonisomorphic graded modules.
\end{corollary}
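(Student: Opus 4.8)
The plan is to reduce to the simple components via the structure Theorem~\ref{semisimple} and then read off the answer from the two explicit module examples above, the only genuine work being a correct identification of the functor $V\mapsto V^{\theta}$ on the simple nongraded modules of each component. First I would use Theorem~\ref{semisimple} to write $A=\bigoplus_k A^{(k)}$ as a direct sum of graded two-sided ideals, each of which is a simple graded algebra of type $M(n,m)$ or $Q(l)$. Since every $A^{(k)}$ is a graded ideal it is $\theta$-invariant, so both functors $P$ and $(-)^{\theta}$ respect this decomposition; moreover every simple module, graded or not, is annihilated by all but one of the $A^{(k)}$. Hence the lists $\Rep(A)$ and $\Rep(|A|)$ are the disjoint unions of the corresponding lists for the individual components, and it suffices to establish the claimed correspondence for a single simple graded algebra.

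For a component of type $M(n,m)$ the parity automorphism is inner: by Example~\ref{M} (see also Remark~\ref{chet}) we have $\theta(a)=JaJ^{-1}$ with $J\in M(n,m)$, so for the unique simple module $W$ of the simple matrix algebra $|M(n,m)|$ the twist $W^{\theta}$ is isomorphic to $W$; thus $W$ appears in $\Rep(|A|)$ as a single $\theta$-fixed module $F=F^{\theta}$. On the graded side, the example of simple modules over $M(n,m)$ provides exactly two graded simple modules $V$ and $P(V)$, both of which restrict to $W$ as nongraded modules and are non-isomorphic as graded modules. This produces precisely the pair $F,P(F)$ in $\Rep(A)$.

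For a component of type $Q(l)$ the automorphism $\theta$ is \emph{outer}, since $J\notin Q(l)$: here $|Q(l)|$ is a direct sum of two simple matrix algebras with simple modules $E,E'$, and conjugation by $J$ interchanges the two blocks, so $E^{\theta}\cong E'$ and $E\not\cong E^{\theta}$; thus $E$ and $E^{\theta}$ enter $\Rep(|A|)$ as a genuine $\theta$-pair. By the example of the simple module over $Q(l)$ there is a single graded simple module $V=P(V)$ of graded dimension $(l,l)$, and as a nongraded module it is the sum of the two non-isomorphic simple modules, i.e.\ $V\cong E\oplus E^{\theta}$. This produces the single entry $E\oplus E^{\theta}$ in $\Rep(A)$.

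Assembling the components gives exactly the two lists in the statement, and the entries are pairwise non-isomorphic because modules supported on different components cannot be isomorphic, while within an $M$-component $F$ and $P(F)$ are distinct graded modules. The step I expect to be the main obstacle is the precise identification of the $\theta$-action on the simple nongraded modules --- namely, showing that $\theta$ is inner on each $M$-component (so it fixes the simple module and forces the case $F=F^{\theta}$) but outer on each $Q$-component (so it swaps the two simple modules and forces the pair $E,E^{\theta}$) --- together with checking that the nongraded restriction of the graded $Q$-module is $E\oplus E^{\theta}$ rather than $E\oplus E$. Everything else is bookkeeping over the direct sum decomposition.
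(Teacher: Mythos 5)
Your proposal is correct and follows essentially the route the paper intends: the paper states this result without proof, as an immediate consequence of the structure Theorem~\ref{semisimple} together with the two examples of graded simple modules over $M(n,m)$ and $Q(n)$, and your component-by-component reduction is exactly that derivation. Your identification of the $\theta$-action on each block (inner on $M$-components via $J$, hence $W^{\theta}\cong W$; block-swapping on $Q$-components since conjugation by $J$ exchanges the two central idempotents of $|Q(l)|$, hence $E^{\theta}\not\cong E$) supplies precisely the details the paper leaves implicit.
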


It turns out that the category of graded modules is isomorphic
to a certain category of nongraded modules over some other algebra. Consider the algebra
 $$
 A[\varepsilon]=\{a+b\varepsilon\mid a, b\in A, \; \varepsilon^2=1,
 \; \varepsilon a=\theta(a)\varepsilon\}.
 $$
Note that  $A[\varepsilon]$
has the canonical automorphism
$\varphi$ defined by the formulas $\varphi(a)=a$, $a\in A$, and $\varphi(\varepsilon)=-\varepsilon$.

\begin{proposition} The category of nongraded  $A[\varepsilon]$-modules
is isomorphic to the category $\GMod$ of graded $A$-modules. Under
this isomorphism,
the functor of changing the parity goes to the functor
$V\rightarrow V^{\varphi}$. A graded irreducible
$A$-module is of type $M$ if and only if $V\ne V^{\varphi}$; it is of type
$Q$ if and only if $V=V^{\varphi}$.
\end{proposition}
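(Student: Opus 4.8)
The plan is to build the isomorphism of categories explicitly, by letting the extra generator $\varepsilon$ act as the grading operator. First I would define a functor from $\GMod$ to the category of nongraded $A[\varepsilon]$-modules: given a graded $A$-module $V=V_{0}\oplus V_{1}$, keep the $A$-action and let $\varepsilon$ act by $(-1)^{p(v)}$, i.e.\ as $+1$ on $V_{0}$ and $-1$ on $V_{1}$. The only thing to verify is compatibility with the defining relation $\varepsilon a=\theta(a)\varepsilon$ of $A[\varepsilon]$: for $a\in A_{i}$ and $v\in V_{j}$ one has $av\in V_{i+j}$, so $\varepsilon(av)=(-1)^{i+j}av$, while $\theta(a)(\varepsilon v)=(-1)^{i}a\,(-1)^{j}v=(-1)^{i+j}av$; these agree, and $\varepsilon^{2}=1$ is immediate. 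A grading-preserving $A$-morphism is precisely an $A$-morphism commuting with this action of $\varepsilon$, so morphisms are sent to morphisms.

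Conversely, given a nongraded $A[\varepsilon]$-module $W$, the relation $\varepsilon^{2}=1$ makes $\varepsilon$ an involution, and I would take $W_{0},W_{1}$ to be its $(+1)$- and $(-1)$-eigenspaces. The same relation shows that for $a\in A_{i}$ and $w\in W_{j}$ one gets $\varepsilon(aw)=\theta(a)\varepsilon w=(-1)^{i+j}aw$, so $aw\in W_{i+j}$; hence $W=W_{0}\oplus W_{1}$ is a graded $A$-module. These two constructions are manifestly mutually inverse on objects, and on morphisms commuting with $\varepsilon$ is the same as preserving the eigenspace decomposition, so we obtain an isomorphism (not merely an equivalence) of categories. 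Since the argument is purely formal, no finiteness or semisimplicity hypothesis is needed at this stage; the only real content is the relation check, which is the natural place for a sign error.

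Next I would identify the image of the parity-change functor. Applying $P$ swaps $V_{0}$ and $V_{1}$, so the grading operator of $P(V)$ is the negative of that of $V$; under the isomorphism this says that $\varepsilon$ acts on $P(V)$ with the opposite sign. On the $A[\varepsilon]$-side, $V^{\varphi}$ has the same $A$-action, because $\varphi$ fixes $A$, but $\varepsilon$ acts through $\varphi(\varepsilon)=-\varepsilon$. Thus $P$ corresponds exactly to the functor $V\mapsto V^{\varphi}$.

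Finally, for the type characterization I would invoke the classification of simple graded modules recorded in the examples: an irreducible graded module is of type $M$ precisely when $V$ and $P(V)$ are nonisomorphic (the antipodal pair), and of type $Q$ precisely when $V\cong P(V)$. Transporting this through the isomorphism together with the identification of $P$ with $(-)^{\varphi}$ yields the stated dichotomy: type $M$ iff $V\not\cong V^{\varphi}$, and type $Q$ iff $V\cong V^{\varphi}$. The main obstacle is not any single hard step but rather keeping the sign bookkeeping consistent across the three descriptions (the grading, the eigenspaces of $\varepsilon$, and the twist by $\varphi$); once the grading operator is identified with the action of $\varepsilon$, everything else is routine verification.
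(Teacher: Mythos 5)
Your proposal is correct and follows essentially the same route as the paper: both directions of the isomorphism are given by identifying the action of $\varepsilon$ with the parity operator, with the $(\pm 1)$-eigenspaces of $\varepsilon$ recovering the grading, and the parity functor matching the twist by $\varphi$. You in fact verify more than the paper does explicitly (the relation $\varepsilon a=\theta(a)\varepsilon$ on modules, and the type $M$/$Q$ dichotomy via $V\cong P(V)$), which the paper leaves as routine checks.
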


\def\Mod{{\rm Mod}}
\def\End{{\rm End}}

 \begin{proof}
Let us construct functors
$$
F:\Mod( A[\varepsilon])\longrightarrow \GMod( A)\quad\text{and}\quad G:\GMod(A) \longrightarrow \Mod( A[\varepsilon]).
$$
Let $V$ be an object of $\Mod( A[\varepsilon])$; then
$\varepsilon\in \End(V)$. Since
$\varepsilon^2=1$, it follows that $V$ can be regarded as an object of the category
$\GMod(A)$
with the following grading:
$$
V_{0}=\{v\in V\mid \varepsilon v=v\}\quad\text{and}\quad
V_{1}=\{v\in V\mid \varepsilon v=-v\}.
$$
The functor $F$ acts identically on morphisms. Since every morphism commutes
with $\varepsilon$, it preserves the above grading and hence is
a morphism of $\GMod(A)$.

The inverse functor also acts identically on objects and morphisms; the action
of $\varepsilon$ coincides with the action of the parity operator. It
is not difficult to check that
$FG$ and $GF$ are identical functors on the corresponding categories.
\end{proof}

\begin{lemma}\label{epsilon}
Assume that a graded algebra contains an odd element
$p $ such that $p^{2}=1 $. Then one can introduce a grading
on the algebra $ A_{0} $ such that $A$ will be isomorphic to
$A_{0}[\varepsilon] $.
\end{lemma}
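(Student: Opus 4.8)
The plan is to let the odd involution $p$ itself generate both the grading on $A_{0}$ and the adjoined element $\varepsilon$. The starting point is that, since $p\in A_{1}$ and $p^{2}=1$ (so $p^{-1}=p$), conjugation $\sigma(x)=pxp$ is an algebra automorphism of $A$ with $\sigma^{2}=\mathrm{id}$, and it preserves $A_{0}$ because conjugating an even element by an odd involution yields an even element. Thus $\theta:=\sigma|_{A_{0}}$ is an order-two automorphism of $A_{0}$, and I would take the grading on $A_{0}$ to be the one whose parity automorphism is $\theta$; explicitly $(A_{0})_{0}=\{a\in A_{0}\mid pa=ap\}$ and $(A_{0})_{1}=\{a\in A_{0}\mid pa=-ap\}$. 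This is exactly the datum the construction $A_{0}[\varepsilon]$ requires, since that construction needs an order-two automorphism of $A_{0}$ to make sense of the twisting relation $\varepsilon a=\theta(a)\varepsilon$.

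Next I would define the candidate map $\Phi\colon A_{0}[\varepsilon]\to A$ on generators by $\Phi|_{A_{0}}=\mathrm{id}$ and $\Phi(\varepsilon)=p$, that is $\Phi(a+b\varepsilon)=a+bp$ for $a,b\in A_{0}$. To see that $\Phi$ is a well-defined algebra homomorphism, I would check that it respects the defining relations of $A_{0}[\varepsilon]$: first $\Phi(\varepsilon)^{2}=p^{2}=1$, and second, for $a\in A_{0}$, $\Phi(\theta(a))\Phi(\varepsilon)=(pap)p=pa=\Phi(\varepsilon)\Phi(a)$, so the relation $\varepsilon a=\theta(a)\varepsilon$ is preserved. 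Multiplicativity in general then follows from a short direct comparison: expanding $(a+b\varepsilon)(a'+b'\varepsilon)=(aa'+b\theta(b'))+(ab'+b\theta(a'))\varepsilon$ inside $A_{0}[\varepsilon]$ and $(a+bp)(a'+b'p)$ inside $A$ using $pa'=\theta(a')p$ and $pb'p=\theta(b')$ gives the same element, so $\Phi$ is a homomorphism.

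The crucial step is then to verify that $\Phi$ is bijective and grading-preserving, and this rests on the identity $A_{1}=A_{0}\,p$. This holds because right multiplication by $p$ is a linear bijection of $A$ (as $p$ is invertible) sending $A_{0}$ into $A_{1}$ (even times odd is odd), while conversely any $x\in A_{1}$ satisfies $xp\in A_{0}$ (odd times odd is even) and hence $x=(xp)p\in A_{0}p$. Granting this, the natural grading on $A_{0}[\varepsilon]$, for which $A_{0}$ is even and $A_{0}\varepsilon$ is odd, is carried by $\Phi$ identically onto the even part $A_{0}$ of $A$ and isomorphically (via right multiplication by $p$) onto $A_{0}p=A_{1}$. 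The decomposition $A=A_{0}\oplus A_{1}$ then yields injectivity and surjectivity simultaneously and confirms that $\Phi$ preserves the grading, so $\Phi$ is a graded algebra isomorphism.

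I expect the only genuine obstacle to be conceptual rather than computational: one must recognize that the correct grading to place on $A_{0}$ is the \emph{inner} one coming from conjugation by $p$, not the trivial grading, since it is precisely this $\theta$ that matches the twisting relation built into $A_{0}[\varepsilon]$. A small point worth stating carefully is that $\theta=\sigma|_{A_{0}}$ is genuinely an involution of $A_{0}$, which is what legitimizes forming $A_{0}[\varepsilon]$ at all; once the grading is fixed in this way, all the verifications above are routine.
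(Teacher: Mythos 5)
Your proof is correct and takes essentially the same approach as the paper: the paper's (one-line) proof likewise takes the inner automorphism $a\mapsto pap^{-1}$ as the parity automorphism on $A_{0}$ and then identifies $A_{0}[\varepsilon]$ with $A$ via $\varepsilon\mapsto p$. Your write-up merely supplies the routine verifications (the homomorphism property, the decomposition $A_{1}=A_{0}p$, and grading preservation) that the paper leaves implicit.
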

\begin{proof}
Take the automorphism $a\rightarrow pap^{-1}$ as the parity automorphism
on $ A_{0} $. Then  $A_{0}[\varepsilon]=A $.
\end{proof}

Let us say that a $\Bbb Z_{2} $-grading of a semisimple algebra is {\it
essential}
if for all its simple components of type $M(n,m)$ both indices
are positive: $n,m>0$. For such an algebra,
the odd and even parts of every simple module are nontrivial.

 \begin{proposition}\label{eqv}
Let $A$ be a semisimple finite-dimensional
$\Bbb Z_{2}$-graded algebra with an essential grading.
Then the functor
$$
I:V_{0}\longrightarrow A\otimes_{A_{0}} V_{0}
$$
from the category $\Mod(A_{0})$ to the category $\GMod(A)$
is an isomorphism of categories.
\end{proposition}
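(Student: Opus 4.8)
The plan is to produce an explicit quasi-inverse to $I$ and to recognize $I$ as an induction functor, so that the whole statement collapses to checking that one natural transformation is an isomorphism. First I would verify that $I$ is well defined: since $A=A_0\oplus A_1$ as a left $A_0$-module, we have $A\otimes_{A_0}V_0=(A_0\otimes_{A_0}V_0)\oplus(A_1\otimes_{A_0}V_0)$, and declaring the first summand even and the second odd makes $A\otimes_{A_0}V_0$ a graded $A$-module, because $A_i(A_j\otimes_{A_0}V_0)\subseteq A_{i+j}\otimes_{A_0}V_0$. The candidate inverse is the even-part functor $R\colon\GMod(A)\to\Mod(A_0)$, $R(W)=W_0$, which is an $A_0$-module since $A_0W_0\subseteq W_0$ and which acts on morphisms by restriction. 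A morphism $A\otimes_{A_0}V_0\to W$ of graded modules is determined by the image of the even subspace $1\otimes V_0$, which must lie in $W_0$; this gives a natural bijection $\mathrm{Hom}_{\GMod(A)}(I(V_0),W)\cong\mathrm{Hom}_{A_0}(V_0,W_0)$, so $I$ is left adjoint to $R$. The unit $V_0\to R(I(V_0))=(A\otimes_{A_0}V_0)_0=A_0\otimes_{A_0}V_0$ is the canonical isomorphism $A_0\otimes_{A_0}V_0\cong V_0$, so $R\circ I$ is the identity up to this identification. Hence the proposition reduces entirely to showing that the counit, the action map $\mu_W\colon A\otimes_{A_0}W_0\to W$, $a\otimes w\mapsto aw$, is an isomorphism of graded $A$-modules for every $W$.

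The first half of this is surjectivity, and it is exactly where essentiality enters. I would observe that the image of $\mu_W$ is $W_0+A_1W_0$ and that this is a graded $A$-submodule of $W$ (using $A_0A_1\subseteq A_1$ and $A_1A_1\subseteq A_0$). For $W$ simple this submodule is nonzero, since $W_0\ne0$ by essentiality, hence it equals $W$; the general case follows because $A$ is semisimple, so $W$ decomposes into simple graded modules and $\mu_W$ is the corresponding direct sum of surjections. Without essentiality a component $M(n,0)$ or $M(0,m)$ would produce a simple module with $W_0=0$, and $\mu_W$ would fail to be surjective, so this hypothesis is genuinely used here.

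The hard part, the injectivity of $\mu_W$, I would settle by a dimension count after reducing to a simple graded algebra via Theorem~\ref{semisimple}. Since the simple components of $A$ are graded two-sided ideals, $A_0$ decomposes accordingly and $A\otimes_{A_0}W_0$ splits along components (cross terms vanish because modules over distinct components tensor to zero), so it suffices to treat $A=M(n,m)$ and $A=Q(n)$ separately. For $M(n,m)$ with $n,m>0$ the even part is a sum of two matrix algebras with simple modules $U_n,U_m$ of dimensions $n$ and $m$; a direct computation gives $I(U_n)\cong V$ and $I(U_m)\cong P(V)$, matching the graded dimensions $(n,m)$ and $(m,n)$, while $R$ sends $V$ and $P(V)$ back to $U_n$ and $U_m$. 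For $Q(n)$ the even part is a single matrix algebra with unique simple module $U_n$, and $I(U_n)$ is the unique type-$Q$ module $V=P(V)$ of graded dimension $(n,n)$. In every case $\dim I(W_0)=\dim W$, so the surjection $\mu_W$ of the previous step is an isomorphism. This establishes $I\circ R\cong\mathrm{id}$, which together with $R\circ I=\mathrm{id}$ shows that $I$ and $R$ are mutually inverse and completes the proof; the only delicate point is the component-wise verification, the rest being formal adjunction bookkeeping.
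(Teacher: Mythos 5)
Your proof is correct, and its skeleton coincides with the paper's: both take the restriction functor $R\colon V\mapsto V_{0}$ as the candidate two-sided inverse, both dispose of $R\circ I=\mathrm{id}$ by the same one-line computation $A_{0}\otimes_{A_{0}}V_{0}=V_{0}$, and both reduce the hard direction $I\circ R\cong\mathrm{id}$ to simple modules by semisimplicity. The difference lies in how that hard step is finished. You split the counit $\mu_{W}\colon A\otimes_{A_{0}}W_{0}\to W$ into surjectivity (via essentiality, exactly as in the paper) plus injectivity, which you obtain from the classification: reduce to $A=M(n,m)$ or $Q(n)$ via Theorem~\ref{semisimple} and match graded dimensions, identifying $I(U_{n})\cong V$, $I(U_{m})\cong P(V)$ in the first case and $I(U_{n})\cong V=P(V)$ in the second. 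The paper avoids the classification and any dimension count: it argues that for simple $V$ the even part $V_{0}$ is itself a simple $A_{0}$-module (if $0\ne U\subseteq V_{0}$ is an $A_{0}$-submodule, then $AU=U\oplus A_{1}U$ is a nonzero graded submodule of $V$, hence equals $V$, and its even part is $U$, forcing $U=V_{0}$), and then, since by essentiality every simple graded summand of $A\otimes_{A_{0}}V_{0}$ has nonzero even part while the total even part is the simple module $V_{0}$, the module $A\otimes_{A_{0}}V_{0}$ must be irreducible; the nonzero counit onto $V$ is then an isomorphism by Schur's lemma. The trade-off: your route is longer and leans on Theorem~\ref{semisimple}, but it yields explicit information (what $I$ does to each simple module, recovering the type-$M$/type-$Q$ dictionary along the way); the paper's route is classification-free and shorter, though stated so tersely that the simplicity of $V_{0}$ and the irreducibility of $A\otimes_{A_{0}}V_{0}$ are left for the reader to supply. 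Your adjunction framing, which isolates the entire content of the proposition in the invertibility of the counit, is a tidy addition not made explicit in the paper.
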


\begin{proof}
It suffices to prove that the restriction functor
$$
R:V\longrightarrow V_{0}
$$
from the category  $\GMod(A)$ to the category  $\Mod(A_{0})$
is the two-sided inverse to $I$. Indeed,
$R\circ I(V_{0})=A_{0}\otimes_{A_{0}}V_{0}=V_{0}$. Conversely, let us prove
the equality
$I\circ R(V)=V$. By additivity, it suffices to prove it
for a simple module $V$. But the previous equality is equivalent to
$A\otimes_{A_{0}}V_{0}=V$. Since $V$ is a simple
$A$-module, this equality implies that
$V_{0}$ is a simple
$A_{0}$-module. By the assumptions of the lemma, the even
part of every simple
$A$-module is nontrivial. But the even part of the module
$A\otimes_{A_{0}}V_{0}$ equals
$V_{0}$. Thus this module is irreducible and coincides with $V$.
The proposition is proved.
\end{proof}

It follows from the proposition that the theory of graded modules
of every semisimple $\Bbb Z_{2}$-graded algebra $A$ reduces
to the theory of ordinary representations of its even part $A_0$.
However, this reduction does not always help to describe all
representations of the algebra $A$, and, conversely, can be used
in the opposite direction. This is the case for projective
representations of the symmetric groups, where the even part
has no simple realization, see Section~6 below.

\section{Inductive families (chains) of $\Bbb Z_{2}$-graded algebras}

\begin{definition} A $\Bbb Z_{2}$-graded graph is a graph that has
an involutive automorphism.
\end{definition}

The main examples of $\Bbb Z_{2}$-graded graphs appear in the following situation.
Consider a (finite) chain of
$\Bbb Z_{2}$-graded algebras
 $$
\frak Y=< \mathbb C= A(1)\subset A(2)\subset\dots \subset A(n)>,\quad n=1,2, \dots.
 $$
Denote by $GA_{i}^{\wedge}$ the set of isomorphism classes
of irreducible objects of the category
$\GMod$.

\def\GHom{{\rm GHom}}

\begin{definition} The branching
graph
of simple modules of the chain $\frak Y$
is the directed graded\footnote{One should not
confuse the term ``graded'' in the context related to the branching graph
with the $\Bbb Z_{2}$-grading of algebras, modules, etc.}
graph $Y(\frak Y)$ whose set of vertices is the disconnected union
$$
\bigcup_{i=1}^n GA(i)^{\wedge}
$$
and the number of edges connecting a vertex
$U\in GA_{i}^{\wedge}$ with a vertex
$V\in GA_{i+1}^{\wedge}$ and directed from $U$ to $V$ is equal
to the dimension of the vector space $\GHom_{A_{i}}(U,V)$
of $A_i$-homomorphisms between $U$ and $V$.
\end{definition}

The corresponding branching graph of nongraded modules will be denoted by
 $Y(|\frak Y|)$. The branching graph of the even subalgebras will be denoted by
 $Y(\frak Y_{0})$.

On the category of $\Bbb Z_{2}$-graded modules we have the involutive
functor $V\rightarrow P(V)$ of changing the parity. Hence every graph of the form
$Y(\frak Y)$ has an automorphism
$\omega$ such that $\omega^2=1$. Thus $\omega$ determines the structure
of a $\Bbb Z_{2}$-graded graph on $Y(\frak Y)$. Besides, the zero level of every such graph
contains two vertices, which are swapped by
$\omega$. It turns out that these properties are characteristic.

A characterization of the graded branching graphs of ordinary modules
is as follows (see \cite{VK}).

\begin{thm}
Let $Y=\cup_{i=0}^n Y_{i}$ be a finite graded  graph with positive integer
multiplicities of edges. Then it is the branching graph of a chain of
nongraded modules if and only if the following conditions hold:

{\rm1)} For every vertex $y$ of $Y$ that does not belong to $Y_{n}$
there exists a vertex that immediately follows $y$.

{\rm2)} For every vertex $y$ of $Y$ that does not belong to $Y_{0}$
there exists a vertex that immediately precedes $y$.

{\rm3)} The set $Y_{0}$ consists of a single element.
\end{thm}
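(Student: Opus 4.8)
The plan is to recognize this as the classical characterization of Bratteli diagrams of chains of semisimple algebras over $\mathbb{C}$, and to prove the two implications separately. Throughout I identify each level $Y_i$ with the set $A(i)^{\wedge}$ of isomorphism classes of irreducible (nongraded) modules, and I write $m(U,V)$ for the multiplicity of the edge joining $U\in Y_i$ to $V\in Y_{i+1}$; by definition of the branching graph this is the multiplicity of $U$ in the restriction $V|_{A(i)}$. To match the levels of $Y$ I reindex the chain of the introduction as $\mathbb{C}=A(0)\subset\cdots\subset A(n)$, all inclusions being unital.

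For necessity I would argue directly from the inclusions. Condition~3) is immediate: since $A(0)=\mathbb{C}$ has a unique irreducible module, $Y_0$ is a single vertex. For condition~2), if $V\in Y_{i+1}$ then $V|_{A(i)}$ is nonzero (the common identity acts as the identity operator), so it contains some irreducible $A(i)$-module $U$, giving an edge $U\to V$, i.e.\ a predecessor of $V$. For condition~1), if $U\in Y_i$ with $i<n$, then $U$ occurs in the left regular $A(i)$-module, which is a direct summand of $A(i+1)$ restricted to $A(i)$ (since $A(i)=A(i)\cdot 1$ is an $A(i)$-submodule and everything is semisimple); decomposing the regular module $A(i+1)\cong\bigoplus_V V^{\dim V}$ shows that $U$ occurs in $V|_{A(i)}$ for at least one $V\in Y_{i+1}$, so $U$ has a successor.

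For sufficiency I would build the chain level by level, letting the graph prescribe the multiplicity matrix. Set $A(0)=\mathbb{C}$ with its unique irreducible of dimension $d_{*}=1$, which matches $Y_0$ by condition~3). Given $A(i)=\bigoplus_{U\in Y_i}\operatorname{Mat}_{d_U}(\mathbb{C})$ already constructed with positive dimensions $d_U$, I define for each $V\in Y_{i+1}$
$$
d_V=\sum_{U\in Y_i} m(U,V)\,d_U,
$$
and set $A(i+1)=\bigoplus_{V\in Y_{i+1}}\operatorname{Mat}_{d_V}(\mathbb{C})$. Here condition~2) guarantees that every $V$ has a predecessor, so the sum is nonempty and $d_V>0$, and each block is genuinely present. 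The embedding $A(i)\hookrightarrow A(i+1)$ is the standard block-diagonal one, placing $m(U,V)$ copies of the $U$-block inside the $V$-block; the defining identity for $d_V$ says precisely that these copies fill the $V$-block, so the map is unital.

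Finally I would verify injectivity and the realized branching. Injectivity is exactly where condition~1) enters: under the block-diagonal map the $U$-summand is annihilated precisely when $\sum_V m(U,V)=0$, i.e.\ when $U$ has no successor; since $i<n$ at every step of the construction, condition~1) forbids this, so $A(i)$ embeds as a genuine subalgebra of $A(i+1)$. By construction the multiplicity of $U$ in $V|_{A(i)}$ equals $m(U,V)$, so the branching graph of the resulting chain is exactly $Y$. The only point requiring care—the main, if modest, obstacle—is to check that conditions~1) and 2) are not merely sufficient for the numerical recursion to make sense, but correspond \emph{exactly} to the two requirements that the constructed homomorphism be a unital injection; once that dictionary is in place, the remainder is the routine matrix-unit bookkeeping of the Bratteli construction.
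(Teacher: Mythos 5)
Your proof is correct, but it follows a different route from the paper's. The paper does not prove this theorem directly at all: it quotes it from the Vershik--Kerov paper \cite{VK} and, for the graded analog stated immediately after, sketches the \emph{path (loop) algebra} construction --- one takes the space of functions on pairs of paths $(s,t)$ in $Y$ with common endpoints, with convolution product $(f\ast g)(s,t)=\sum_{r}f(s,r)g(r,t)$, and the chain of subalgebras arises canonically from truncating paths. That construction realizes the whole chain at once, with embeddings that are canonical (no choices), and makes it manifest that the branching graph of the resulting chain is $Y$; it also adapts directly to the graded case by imposing $\omega$-invariance, which is why the paper uses it. Your argument is instead the explicit Bratteli/matrix-block construction: the dimension recursion $d_V=\sum_U m(U,V)\,d_U$, block-diagonal unital embeddings, and the observation that condition 2) is exactly positivity of $d_V$ (unitality) while condition 1) is exactly injectivity of the embedding. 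This is more elementary and self-contained, and your necessity argument (uniqueness of the irreducible over $\mathbb{C}=A(0)$; nontriviality of restriction via the common unit; occurrence of every irreducible in the regular module, which is an $A(i)$-direct summand of $A(i+1)$) is complete and correct. The only costs of your route are that the embeddings depend on a choice of block arrangement (harmless here, since the theorem asserts existence, and any two such choices give equivalent chains) and that it would require reworking to produce the graded version with type $M$/type $Q$ components, which the paper's loop construction handles uniformly.
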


In the case of a chain of graded algebras, we have the following theorem.

\begin{thm}
Let $Y=\cup_{i=0}^n Y_{i}$ be a finite graded graph with positive integer
multiplicities of edges. Then it is the branching graph of a chain
of graded modules if and only if the following conditions hold.

{\rm1)} For every vertex $y$ of $Y$ that does not belong to $Y_{n}$
there exists a vertex that immediately follows $y$.

{\rm2)} For every vertex $y$ of $Y$ that does not belong to $Y_{0}$
there exists a vertex that immediately precedes $y$.

{\rm3)} There exists an involutive automorphism
$\omega$ of the graph $Y$ that preserves the grading.

{\rm4)} The set $Y_{0}$ consists of two elements, and the automorphism
$\omega$ swaps these elements.
\end{thm}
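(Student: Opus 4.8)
The plan is to prove the two implications separately: necessity by functoriality of restriction and of the parity functor, and sufficiency by an explicit induction on $n$ that parallels the nongraded theorem (cf.\ \cite{VK}) while keeping track of the involution $\omega$ and of the two types of simple components.

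For necessity, suppose $Y=Y(\frak Y)$ for a chain $\Bbb C=A(0)\subset\dots\subset A(n)$. Conditions (1) and (2) I would obtain exactly as in the nongraded case: any simple graded $A(i)$-module $U$ occurs in the restriction of the regular $A(i+1)$-module and hence in the restriction of some simple $V\in Y_{i+1}$, giving a successor; and the restriction of a simple $A(i+1)$-module to the semisimple subalgebra $A(i)$ is nonzero and so contains some simple, giving a predecessor. For (3) I would take $\omega$ to be the vertex map induced by the parity-changing functor $P$; since $(PV)|_{A(i)}=P(V|_{A(i)})$ and $\GHom_{A(i)}(PU,PV)\cong\GHom_{A(i)}(U,V)$, this $\omega$ is a grading-preserving automorphism with $\omega^2=1$. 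For (4) I would use that the bottom algebra $\Bbb C$ has exactly two graded simple modules, the even and the odd one-dimensional module, interchanged by $P$, so $Y_0$ has two vertices swapped by $\omega$.

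For sufficiency I would induct on $n$. The truncation $Y'=\cup_{i=0}^{n-1}Y_i$ again satisfies (1)--(4), so by the induction hypothesis it is realized by a chain $\Bbb C=A(0)\subset\dots\subset A(n-1)$ whose parity functor induces $\omega|_{Y'}$; it then remains to build $A(n)\supset A(n-1)$ realizing the top level. First I would assign graded dimensions by the recursion $\Gdim V=\sum_{U\in Y_{n-1}}m_{UV}\,\Gdim U$ with $(1,0),(0,1)$ at level $0$, where $m_{UV}$ is the edge multiplicity; $\omega$-invariance of $(m_{UV})$ then gives $\Gdim(\omega V)=(b_V,a_V)$ when $\Gdim V=(a_V,b_V)$. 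I would then set $A(n)=\bigoplus_{[V]}C_V$ over the $\omega$-orbits of $Y_n$, with $C_V=Q(a_V)$ for an $\omega$-fixed vertex (where forcibly $a_V=b_V$) and $C_V=M(a_V,b_V)$ for a free pair $\{V,\omega V\}$. With this assignment the two antipodal $M$-modules carry the labels $V,\omega V$ and the single $Q$-module carries the fixed label, so the parity functor on $A(n)$-modules reproduces $\omega|_{Y_n}$ by design.

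The crux, which I expect to be the main obstacle, is to produce a graded embedding $A(n-1)\hookrightarrow A(n)$ whose branching equals the prescribed $\omega$-invariant matrix $(m_{UV})$. I would build it block by block via maps $x\mapsto x\otimes1$ into $B_U\otimes D$ for a suitable simple factor $D$, using the tensor-product computations recorded in Section~2: $D=M(k,l)$ gives $M(a,b)\hookrightarrow M(a,b)\otimes M(k,l)=M(ak+bl,al+bk)$ with multiplicities $k$ for $U$ and $l$ for $\omega U$; $D=Q(m)$ gives $M(a,b)\hookrightarrow M(a,b)\otimes Q(m)=Q((a+b)m)$ and $Q(n)\hookrightarrow Q(n)\otimes Q(m)=M(nm,nm)$ for the $M$-to-$Q$ and $Q$-to-$M$ branchings; and the diagonal $Q(n)\hookrightarrow Q(mn)$ realizes arbitrary $Q$-to-$Q$ multiplicities. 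The hard part is the assembly: I must check that these block maps combine into a single grading-preserving homomorphism, that the two sides have matching graded dimensions (guaranteed by the recursion for $\Gdim$ together with the $\omega$-invariance forcing antipodal blocks to receive mirror multiplicities), and that the parity involution induced on the simple $A(n)$-modules agrees with $\omega$ on $Y_n$. Granting this, $\dim\GHom_{A(n-1)}(U,V)=m_{UV}$ holds by construction, the branching graph of the new chain is $Y$, and the induction closes.
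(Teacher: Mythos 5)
Your necessity argument is correct and agrees with what the paper itself observes just before the theorem: restriction to the subalgebra gives conditions (1)--(2), the parity functor $P$ induces a grading-preserving involution $\omega$ of the branching graph, and the bottom algebra $\mathbb{C}$ has exactly two graded simple modules (the even and the odd line), swapped by $P$, which gives condition (4).

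The gap is in the sufficiency direction, at exactly the step you call ``the crux'' and then grant. Your tensor embeddings $B_U\hookrightarrow B_U\otimes D$ realize only branchings in which a \emph{single} simple component of $A(n-1)$ feeds a \emph{single} simple component of $A(n)$. But $(m_{UV})$ is an arbitrary $\omega$-invariant matrix subject to (1)--(2), and a level-$n$ orbit will in general receive edges from several distinct level-$(n-1)$ orbits: already for $A(n-1)=B_{U_1}\oplus B_{U_2}$ and one target component $C_V$ with $m_{U_1V}=m_{U_2V}=1$, there is no factor $D$ and no map of the form $x\mapsto x\otimes 1$ defined on all of $A(n-1)$, and the separate block maps you list do not assemble into a homomorphism $A(n-1)\to A(n)$ (your slicing is by source blocks, whereas a homomorphism must be specified on all of $A(n-1)$ for each target block). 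The workable inductive step goes block-by-block in the \emph{target}: for each $\omega$-orbit $[V]$ of $Y_n$ one builds a graded $A(n-1)$-module $W_V$ containing each $U$ with multiplicity $m_{UV}$ and takes the action map $A(n-1)\to \mathrm{End}(W_V)$; when $V$ is a free vertex this lands in $M(a_V,b_V)$, but when $V$ is $\omega$-fixed one must additionally construct an odd $A(n-1)$-module isomorphism $W_V\cong P(W_V)$ squaring to the identity so that the image lies in the subalgebra $Q(a_V)\subset M(a_V,a_V)$ --- this is precisely where the hypothesis $m_{UV}=m_{\omega U,V}$ enters, and your proposal never produces this odd intertwiner. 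The paper sidesteps the induction entirely: it realizes $Y$ at one stroke by a graded version of the Vershik--Kerov path algebra, namely the algebra of $\omega$-invariant functions on pairs of paths issuing from level $0$ and ending at a common vertex, with convolution product and with parity operator $\theta(f)(s,t)=\pm f(s,t)$ according to whether $s$ and $t$ start at the same vertex of $Y_0$; taking $A(k)$ to be this algebra for the truncated graph $\cup_{i\le k}Y_i$ gives the chain, and the simple components are read off from the $\omega$-orbits of the top level ($M(n_t,m_t)$ for two-point orbits, $Q(n_t)$ for fixed vertices), so that the induced parity involution is $\omega$ by construction. Until you supply the assembly step and the odd intertwiner for $Q$-type targets, your sufficiency argument is incomplete.
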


The construction of an algebra from the corresponding graph follows the same
scheme as in \cite{VK}. Namely, consider the set $B(Y)$ of loops of the graph $Y$,
i.e., the set of pairs of paths $(t,s)$ that begin at the same
vertex of level $0$ and end at the same vertex of level
$n$. Consider the vector space $A$ of functions $f$ on
$B(Y)$ that are invariant under $ \omega$, i.e., satisfy $f(\omega
s, \omega t)=f(s,t)$, and define the parity operator in this space by
the following formula:
$ \theta(f)(t, s) = f(t,s)$ if the paths
$s,t$ begin at the same vertex, and $\theta(f)(t, s) =-f(t,s)$
if the paths $s,t$ begin at different vertices. The multiplication
is defined as in \cite{VK}:
$$
(f\ast g)(s,t)=\sum_{r}f(s,r)g(r,t),
$$
where $(s,t)\in B(Y)$ and the sum is taken over all paths $r$ such that
$(s,r),(r,t)\in B(Y)$. We can also describe the decomposition of the algebra $A$
into simple components in terms of the graph $Y$. Denote by
$\tilde Y_{n}$ the set of orbits with respect to the action of
$\omega$ in $Y_{n}$.  Since $\omega $ is an involutive automorphism,
orbits can be of two types: those consisting of a single vertex
(type $Q$), and those consisting of two vertices (type $M$). Let
$t\in\tilde Y$, and let $n_{t},m_{t}$ be the numbers
of paths going from the two vertices of
$ Y_{0}$ to one of the vertices belonging to $t$. Note that in the case of
a vertex of type $Q$, we have $n_{t}=m_{t}$.
Then
 $$
 A=\oplus_{t\in\tilde Y_{n}}A_{t},
 $$
where $A_{t}=M(n_{t},m_{t})$ if the vertex is of type $M$, and
$A_{t}=Q(n_{t})$ if the vertex is of type $Q$.

\begin{definition} Let us say that two chains  $\frak Y,\frak Y^{\prime}$
of $\Bbb Z_{2}$-graded algebras are equivalent if for
$i=1,\dots, n$ there exist isomorphisms
$f_{i} : A(i)\rightarrow A(i)^{\prime}$ such that the diagrams
$$
\begin{array}{ccc}
A(i+1)&\stackrel{f_{i+1}}{\longrightarrow}&A^{\prime}(i+1) \\
\uparrow & &\uparrow \\
A(i)&\stackrel{f_{i}}{\longrightarrow}&A^{\prime}(i) \\
\end{array}
$$
are commutative for $i=1,\dots,n-1$.
\end{definition}

\begin{proposition}  Equivalence classes of chains are in a bijection
with the branching graphs of graded simple modules (regarded up to isomorphism).
\end{proposition}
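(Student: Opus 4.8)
The plan is to exhibit two mutually inverse maps: one sending a chain to its branching graph $Y(\frak Y)$, and one sending a graph to the chain generated by the loop construction already described in the excerpt, and then to check that they descend to inverse bijections between equivalence classes of chains and isomorphism classes of graded branching graphs.

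First I would verify that $\frak Y\mapsto Y(\frak Y)$ is well defined on equivalence classes. Given an equivalence $(f_i)$ between $\frak Y$ and $\frak Y^\prime$, each graded isomorphism $f_i\colon A(i)\to A^\prime(i)$ induces an equivalence of categories $\GMod(A(i))\cong\GMod(A^\prime(i))$ by pulling back the module structure along $f_i^{-1}$, hence a bijection $GA(i)^{\wedge}\cong GA^\prime(i)^{\wedge}$ of vertex sets. These equivalences intertwine the parity-change functor $P$, so the induced bijections commute with the involution $\omega$. Commutativity of the squares relating $A(i)\subset A(i+1)$ to $A^\prime(i)\subset A^\prime(i+1)$ guarantees compatibility with restriction of modules, so the bijections preserve the branching multiplicities $\dim\GHom_{A(i)}(U,V)$. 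Thus $(f_i)$ assembles into an isomorphism $Y(\frak Y)\cong Y(\frak Y^\prime)$ of graded graphs, and the map on classes is well defined.

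Next I would use the loop construction to return from a graph to a chain. Given a graded graph $Y$ satisfying conditions 1--4 of the characterization theorem, the construction produces, for each $k$, an algebra $A(k)$ of $\omega$-invariant functions on the loops $(t,s)$ joining level $0$ to level $k$; the standard inclusion $A(k)\hookrightarrow A(k+1)$ (extending a loop by a common tail) makes these into a chain $\frak Y(Y)$. The decomposition $A(k)=\oplus_{t\in\tilde Y_k}A_t$ with $A_t=M(n_t,m_t)$ or $Q(n_t)$ shows that the simple graded $A(k)$-modules correspond exactly to the $\omega$-orbits in $Y_k$, i.e.\ to the vertices of $Y$ at level $k$ with their $\omega$-action, and a direct count of paths shows that the branching multiplicities of $\frak Y(Y)$ reproduce the edge multiplicities of $Y$. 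Hence $Y(\frak Y(Y))\cong Y$.

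Finally I would show that $\frak Y(Y(\frak Y))$ is equivalent to $\frak Y$, which is the crux. By Theorem~\ref{semisimple} each $A(k)$ is a sum of simple graded algebras of type $M$ and $Q$, so $A(k)$ is abstractly isomorphic to the level-$k$ loop algebra built from $Y(\frak Y)$; fixing such isomorphisms amounts to choosing a graded path basis of matrix units in each simple component. The real content is that the inclusion $A(k)\subset A(k+1)$ is determined, up to composition with a graded inner automorphism, by the branching data alone. This is the graded analog of the Bratteli--Noether--Skolem rigidity: two graded embeddings of a semisimple graded algebra into another inducing the same branching multiplicities differ by conjugation by an even invertible element. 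I expect this rigidity to be the main obstacle, precisely because of the two types of simple algebras and the involution: one must treat separately how an $M$- or $Q$-type simple module of $A(k)$ sits inside a module of $A(k+1)$, keeping track of the antipodal pairs $V,P(V)$ (distinct for type $M$, equal for type $Q$) and verifying that the conjugating element can be chosen even and compatibly across successive levels. Once this is established, the conjugations are absorbed into the isomorphisms $f_k$, producing the commuting squares of the equivalence and showing that the two maps are mutually inverse.
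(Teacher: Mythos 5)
Your overall architecture --- checking that $\frak Y\mapsto Y(\frak Y)$ descends to equivalence classes, reconstructing a chain from a graded graph via the loop construction, and reducing injectivity to a rigidity statement for embeddings with the same branching --- is sound, and the first two steps are essentially routine (the paper compresses them into a reduction to two-level chains $B\subset A$, $B\subset A'$). The problem is the third step: you name the graded Skolem--Noether rigidity, call it ``the main obstacle,'' and then continue with ``once this is established.'' That unproven statement is not a technicality to be absorbed later; it \emph{is} the proof. The paper's entire argument consists of establishing exactly this point: after reducing to a simple graded algebra $A$ with standard module $V$, it proves that an automorphism $\varphi$ of $A$ acts identically on a graded subalgebra $B$ if and only if $\varphi(a)=faf^{-1}$, where $f$ is an isomorphism of $B$-modules either $V\to V$ or $V\to P(V)$. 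The proof uses that the parity automorphism of $A$ is conjugation by the grading operator $J$ of $V$, that consequently $\varphi(J)=\pm J$, and a Skolem--Noether argument to write $\varphi$ as conjugation by a linear map $f$, which the sign condition forces to be homogeneous and the condition $\varphi|_B=\mathrm{id}$ forces to intertwine the $B$-action. Without this lemma (or an equivalent one), your proposal is a plan, not a proof.

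Moreover, the form of rigidity you assert is more restrictive than what the paper proves, and the discrepancy sits exactly at the difficulty you flag but do not resolve. You claim the conjugating element ``can be chosen even,'' whereas the paper's dichotomy allows $f\colon V\to P(V)$, i.e., an \emph{odd} conjugator, and the odd case is non-vacuous: take $B=Q(n)$ inside $A=M(n,n)=\End(V)$ and let $u=\left(\begin{smallmatrix}0&1_n\\1_n&0\end{smallmatrix}\right)\in B_1$, so $u^2=1$. Conjugation by $u$ is a graded automorphism of $A$ fixing $B$ pointwise, yet it coincides with no even conjugation (it interchanges the two central idempotents of $A_0$, which every even conjugation fixes). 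Since an isomorphism of branching graphs is only required to commute with the involution $\omega$, it may swap an antipodal pair $V,P(V)$, and matching two chains compatibly with such an isomorphism in general requires precisely these odd conjugations at type-$Q$ and type-$M(n,n)$ components; whether one can always trade them away by modifying the graph isomorphism and the lower-level maps $f_k$ is itself a claim requiring proof. Keeping track of the $M$/$Q$ dichotomy and the $\omega$-action through the induction is what the paper's case analysis accomplishes, and it is exactly what is missing from your argument.
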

\begin{proof}
It suffices to consider the case of a chain of length $2$. First note that
it suffices to restrict ourselves to chains of the form
$B\subset A$, $B\subset A^{\prime}$ and prove that the diagram
 \begin{equation}\label{diagram}
\begin{array}{ccc}
A &\stackrel{f}{\longrightarrow}&A^{\prime} \\
\uparrow & &\uparrow \\
B&\stackrel{{\rm id}_{B}}{\longrightarrow}&B \\
\end{array}
\end{equation}
is commutative if and only if the corresponding simple
$A,A^{\prime}$-modules  $V,V^{\prime}$ are isomorphic as $B$-modules.
Replacing the algebra $B$ by the projection to the corresponding simple component,
we may assume that $A,A^{\prime}$ are simple algebras. Besides, we
may replace the algebra $A^{\prime} $ by
$ A$. Thus it suffices to prove the following assertion. Let
$ A$ be a simple $\Bbb Z_{2}$-graded algebra, $V$ be a standard $A$-module,
$ B$ be a $\Bbb Z_{2}$-graded subalgebra of $A$, and
$\varphi$ be an automorphism of $ A$. Then $\varphi$ acts on
$ B$ identically if and only if it is of the form
$\varphi(a)v=faf^{-1}v$, where $f$ is an isomorphism either of the $B$-module $V$,
or of the $B$-modules $V$ and  $P(V)$. Let us prove this assertion. Assume that
$\varphi$ is of this form. Then for
$b\in B$ we obtain $\varphi(b)v=fbf^{-1}v=bv$.
Hence  $\varphi(b)=b$. Conversely, assume that
$\varphi(b)=b$ for every $b\in B$. The parity automorphism of the algebra $A$
is of the form $a\rightarrow JaJ^{-1}$, where $J$ is the parity automorphism
of the module  $V$. Since $\varphi$ preserves the parity,
$\varphi(J)=\pm J$. Further, there exists a linear map
$f : V\rightarrow V$ such that $\varphi(a)v=faf^{-1}v$. Since
$\varphi(J)=\pm J$, it follows that  $f$  is a graded homomorphism either
$V\rightarrow V$ or
$V\rightarrow P(V)$. The conditions
$\varphi(b)=b$, $\varphi(a)v=faf^{-1}v$ imply that $f$ is a homomorphism of
$B$-modules. The proposition is proved.
\end{proof}

A branching graph is called simple if it has no multiple edges. In order
to formulate a simplicity criterion, we use the notion of centralizer
from Section~2. The following lemma gives a simplicity criterion for
graded modules.

\begin{lemma}Let $B\subset A$ be a short chain of graded algebras. The branching
of the corresponding graded modules is simple if and only if
the algebra $Z(A,B)$ is commutative, and in this case it is generated by the
graded centers $Z(A),
Z(B)$. (Cf.\ the simplicity criterion in  \cite{OV,VO}).
\end{lemma}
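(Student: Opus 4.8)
The plan is to reduce the entire statement to the classical (nongraded) simplicity criterion of \cite{OV,VO} applied to the inclusion of even parts $B_{0}\subset A_{0}$. Two facts already established make this reduction possible: by Lemma~\ref{centraliser} the graded centralizer coincides with the ordinary centralizer of the even parts, $Z(A,B)=Z(A_{0},B_{0})$; and by the observation recorded after the definition of the center, the graded centers of semisimple algebras are the ordinary centers of their even parts, $Z(A)=Z(A_{0})$ and $Z(B)=Z(B_{0})$. Thus the algebra $Z(A,B)$, its commutativity, and the property of being generated by $Z(A),Z(B)$ are all properties of the purely even data $B_{0}\subset A_{0}$.

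Granting this, the first step is to recall the nongraded picture for $B_{0}\subset A_{0}$. Writing the even branching multiplicities as $m_{\lambda\mu}$, where $\lambda,\mu$ index the simple components of $A_{0}$ and $B_{0}$, the Bratteli description of the centralizer of a semisimple subalgebra gives $Z(A_{0},B_{0})\cong\bigoplus_{\lambda,\mu}M_{m_{\lambda\mu}}(\mathbb C)$. This algebra is commutative if and only if every $m_{\lambda\mu}\le 1$, i.e.\ exactly when the even branching graph $Y(\mathfrak Y_{0})$ has no multiple edges; and in that case each nonzero summand is a copy of $\mathbb C$ whose minimal idempotent is the product $z^{A_{0}}_{\lambda}z^{B_{0}}_{\mu}$ of central idempotents of $A_{0}$ and $B_{0}$, so that $Z(A_{0},B_{0})$ is generated by $Z(A_{0})$ and $Z(B_{0})$. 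This is precisely the criterion of \cite{OV,VO}, and via the identifications of the first paragraph it translates verbatim into the desired statement about $Z(A,B)$, $Z(A)$, $Z(B)$ --- provided we know that simplicity of the even branching is the same as simplicity of the graded branching.

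The remaining, and main, step is therefore to identify the graded branching graph $Y(\mathfrak Y)$ with the even branching graph $Y(\mathfrak Y_{0})$. Here I would invoke Proposition~\ref{eqv}: for an essential grading the restriction functor $R\colon V\mapsto V_{0}$ is an isomorphism of categories $\GMod(A)\xrightarrow{\ \sim\ }\Mod(A_{0})$, and likewise for $B$. Since passing to the even part commutes with restriction along $B\subset A$ (restricting and then taking $V_{0}$ agrees with taking $V_{0}$ and then restricting along $B_{0}\subset A_{0}$), these two isomorphisms intertwine the two restriction functors. Consequently $\GHom_{B}(U,V)\cong\operatorname{Hom}_{B_{0}}(U_{0},V_{0})$ for all graded simple modules, so the edge multiplicities of $Y(\mathfrak Y)$ and $Y(\mathfrak Y_{0})$ coincide under the induced bijection of vertices; in particular one graph is simple exactly when the other is. Combining this with the two previous paragraphs proves the lemma.

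The delicate point --- where care is really needed --- is this last identification, because the vertex bijection is not the naive one: a component $M(n,m)$ of $A$ contributes the antipodal pair $V,P(V)$, matched with the two even simple modules of $M_{n}\oplus M_{m}$, while a component $Q(n)$ contributes a single self-antipodal module matched with the unique simple module of $Q(n)_{0}\cong M_{n}$. One must verify that the graded Schur lemma, $\GHom_{A}(V,V)=\mathbb C$ in both the $M$ and the $Q$ case (the extra odd endomorphism in the $Q$ case being invisible to the even $\GHom$), is exactly compensated by the ordinary Schur lemma on the even side, so that no spurious factors of two enter the multiplicities; this is what makes the dimension count underlying Proposition~\ref{eqv} come out right. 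For a non-essential grading $R$ is no longer an isomorphism, and one would instead track the parity-shift involution $\omega=P$ directly, using its symmetry $\GHom_{B}(P(U),P(V))\cong\GHom_{B}(U,V)$ on the graded graph; but for the chains relevant to the applications the grading is essential and this subtlety does not arise.
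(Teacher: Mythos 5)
Your route is genuinely different from the paper's: the paper never passes to the even parts, but instead uses the isomorphism of categories $\GMod(A)\cong\Mod(A[\varepsilon])$, applies the classical criterion of \cite{OV,VO} to the nongraded pair $B[\varepsilon]\subset A[\varepsilon]$, and then transports the answer through the identity $Z(A[\varepsilon],B[\varepsilon])=Z_{0}(|A|,|B|)+\varepsilon Z_{0}^{\theta}(|A|,|B|)$, whose image under the map sending $\varepsilon$ to $1$ is exactly $Z(A,B)$. That argument needs no hypothesis on the grading. Your argument, by contrast, hinges entirely on Proposition~\ref{eqv}, which is valid only for essential gradings, while the lemma is stated without any such hypothesis. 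This is a genuine gap, and your reason for dismissing it (``for the chains relevant to the applications the grading is essential'') is factually wrong: every chain in the paper begins with $A(1)=\mathbb C=M(1,0)$, a purely even algebra, so already the short chain $\mathbb C\subset A(2)$ --- the bottom edge of every branching graph in the paper, including the chains $\mathbb S$ and the $\frak A_{n}$ --- is a case your proof does not cover.

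The gap can be filled, but it requires an argument you only gesture at. What must be proved is that $\dim\GHom_{B}(U,V)=\dim\operatorname{Hom}_{B_{0}}(U_{0},V_{0})$ for every graded simple $U$ with $U_{0}\ne 0$, with no essentiality assumed. For $U$ lying in a purely even component $M(k,0)$ of $B$ this is not formal: a graded $B$-map $f:U\to V$ must satisfy $B_{1}f(U)=0$ in addition to being a $B_{0}$-map $U_{0}\to V_{0}$, and one has to observe that this condition is vacuous because $B_{1}e=(Be)_{1}=0$ for the central idempotent $e$ of a purely even component. Once this is in place, the remaining ``phantom'' vertices $U$ with $U_{0}=0$ (and likewise $V$ with $V_{0}=0$) carry only mirror-image multiplicities, since $\GHom_{B}(U,V)\cong\GHom_{B}(P(U),P(V))$, so the maximal edge multiplicities of $Y(\frak Y)$ and $Y(\frak Y_{0})$ coincide and your reduction goes through. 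With that supplement your proof is correct and is a legitimate alternative to the paper's; but as written, the key equivalence ``graded branching simple $\Leftrightarrow$ even branching simple'' is established only under a hypothesis the lemma does not make.
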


\begin{proof}
It is not difficult to check that
$Z( A[\varepsilon], B[\varepsilon] )=Z_{0}(|A|,|B|)+\varepsilon Z_{0}^{\theta}(|A|,|B|)$.

Hence the algebra $Z(A,B) $ is the image of the algebra
$Z(A[\varepsilon], B[\varepsilon])$ under the
homomorphism that
sends $\varepsilon$ to  $1$. It easily follows that
$Z(A,B)$ is commutative if and only if so is
$Z( A[\varepsilon],B[\varepsilon] )$.
In this case, it is well known that $Z(A,B) $
is generated  by the subalgebras $Z(A[\varepsilon])$, $Z(B[\varepsilon] )$.
 \end{proof}

Like in the nongraded case, two types of problems arise for the graphs
under consideration. The analysis problem: given a graded algebra, construct
the graph of simple modules. And the synthesis problem: given a graph with
involution, describe the algebra for which it is the graph
of simple modules.

Let $G$ be a  ${\Bbb Z}_2$-graded group; then
$G_0$, being a subgroup of index $2$, is a normal subgroup of $G$, so that
$G$ is a ${\Bbb Z}_2$-extension of $G_0$. Recall (see
\cite{FH}) the relation between irreducible
representations of these groups.
The group ${\Bbb Z}_2$ acts on the simple modules of the group
$G_0$ and divides them into two classes: those that are fixed
under the action of ${\Bbb Z}_2$ and those that are not.
The direct sum of a module of the second class and its ${\Bbb Z}_2$-image
is an irreducible ${\Bbb Z}_2$-graded $G$-module.
Note that for ${\Bbb Z}_2$-graded algebras, the situation is similar but
more complicated. The existence of such relations implies
the existence of relations between the corresponding
branching graphs $Y(\frak Y)$, $Y(|\frak Y|)$, and $Y(\mathfrak Y_{0})$.

In contrast to the tradition, we will denote the alternating
group by $S_{n}^+$ rather than $A_{n}$.

Proposition~ \ref{eqv} implies the following lemma.

\begin{lemma}\label{eqv1} Assume that the odd part of every algebra
of a chain  $\frak Y$ of
$\Bbb Z_{2}$-graded algebras is nontrivial. Then the branching graph
  $Y({\mathfrak Y})$  of this chain  coincides with the branching graph
$Y(|\frak Y_{0}|)$.
\end{lemma}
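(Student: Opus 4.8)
The plan is to apply Proposition~\ref{eqv} at each level of the chain and then check that the resulting isomorphisms of categories are compatible with restriction, so that both the vertices and the edge multiplicities of the two graphs agree. The hypothesis that the odd part of each algebra is nontrivial is used to guarantee that the grading of every $A(i)$ is essential (equivalently, that the even and odd parts of each simple $A(i)$-module are nonzero), which is exactly what is needed to invoke Proposition~\ref{eqv}. It provides, for each $i$, mutually inverse functors $I_i\colon\Mod(A(i)_0)\to\GMod(A(i))$ and $R_i\colon\GMod(A(i))\to\Mod(A(i)_0)$ with $R_i(V)=V_0$, establishing an isomorphism of categories. In particular $R_i$ restricts to a bijection $GA(i)^\wedge\to (A(i)_0)^\wedge$, $V\mapsto V_0$, on isomorphism classes of simple objects, and this is the required level-by-level bijection between the vertices of $Y(\mathfrak Y)$ and those of $Y(|\mathfrak Y_0|)$.

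It then remains to match edge multiplicities. By definition, the number of edges from $U\in GA(i)^\wedge$ to $V\in GA(i+1)^\wedge$ in $Y(\mathfrak Y)$ is $\dim\GHom_{A(i)}(U,V|_{A(i)})$, while the number of edges from $U_0$ to $V_0$ in $Y(|\mathfrak Y_0|)$ is $\dim\operatorname{Hom}_{A(i)_0}(U_0,V_0|_{A(i)_0})$. I would use two observations. First, since $R_i$ is an isomorphism of categories, it induces a linear isomorphism $\GHom_{A(i)}(U,W)\cong\operatorname{Hom}_{A(i)_0}(R_iU,R_iW)$ for all graded $A(i)$-modules $U,W$, so in particular these two Hom-spaces have equal dimension. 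Second, taking the even part commutes with restriction along $A(i)\subset A(i+1)$: for a graded $A(i+1)$-module $V$ one has $(V|_{A(i)})_0=V_0|_{A(i)_0}$, since the underlying even subspace is unchanged and the action of $A(i)_0$ is simply the restriction of the action of $A(i+1)_0$. Applying the first observation with $W=V|_{A(i)}$ and then the second yields
$$
\dim\GHom_{A(i)}(U,V|_{A(i)})=\dim\operatorname{Hom}_{A(i)_0}(U_0,(V|_{A(i)})_0)=\dim\operatorname{Hom}_{A(i)_0}(U_0,V_0|_{A(i)_0}),
$$
which is precisely the edge multiplicity in $Y(|\mathfrak Y_0|)$. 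Together with the vertex bijection, this gives $Y(\mathfrak Y)=Y(|\mathfrak Y_0|)$.

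The formal part of the argument is essentially automatic once Proposition~\ref{eqv} is available; the point that needs care, and which I regard as the main obstacle, is twofold. First, one must check that the hypothesis really yields an essential grading at each level: without it, a parity-reversed antipodal module $P(V)$ could have zero even part and would then restrict to the zero $A(i)_0$-module, destroying the vertex correspondence $V\mapsto V_0$. Second, one must verify that the level-$i$ and level-$(i+1)$ isomorphisms are compatible with the inclusion $A(i)\subset A(i+1)$, i.e.\ that $R_i$ and $R_{i+1}$ intertwine the graded restriction functor with the ordinary restriction functor of even parts; this is the content of the identity $(V|_{A(i)})_0=V_0|_{A(i)_0}$, and it is what allows the single-level Hom-space isomorphisms to be assembled into an equality of branching graphs.
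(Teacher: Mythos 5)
Your proof takes essentially the paper's own route: the paper simply remarks that Proposition~\ref{eqv} implies the lemma, and your argument spells out that implication correctly --- the isomorphism of categories $V\mapsto V_{0}$ gives the level-by-level vertex bijection, and the identity $(V|_{A(i)})_{0}=V_{0}|_{A(i)_{0}}$ transfers the edge multiplicities. The only caveat, inherited from the paper itself, is reading the hypothesis ``nontrivial odd part'' as an essential grading (i.e.\ nontrivial odd part of every \emph{simple component}), which is what Proposition~\ref{eqv} actually requires.
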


\def\sgn{{\rm sgn}}

\begin{example} {\rm Consider the involutive automorphism
$\theta$ of the group algebra
$\mathbb C[S_{n}]$ of the symmetric group defined by the formula
$\theta(\sigma)=\sgn(\sigma)\sigma$, where $\sgn(\sigma)$ is the sign
of a permutation $\sigma$, and the corresponding structure of a graded algebra
on $\mathbb C[S_{n}]$. Let
$Y(\mathbb S)$ be the graded branching graph of the chain
$$
\mathbb S=< \mathbb C=  \mathbb C[S_{1}]\subset\mathbb C[S_{2}]\subset\dots \subset \mathbb C[S_{n}]>
$$
of the group algebras of the symmetric groups with this
$\Bbb Z_{2}$-grading, and let $Y(|\mathbb S^+|)$
be the nongraded branching graph of the chain
$$
\mathbb S^+=<\mathbb C=  \mathbb C[S^{+}_{1}]\subset\mathbb C[S^{+}_{2}]\subset\dots \subset \mathbb C[S_{n}^+]>
$$
 of the group algebras of the alternating groups.
Then it follows from Lemma~\ref{eqv1} that the graph  $Y(\mathbb S)$
coincides with the graph
$Y(|\mathbb{S^+}|)$ at all levels except the first one.}
\end{example}

Note that an irreducible graded representation of the
symmetric group coincides with the ordinary representation corresponding
to a diagram $\lambda$ if
$\lambda=\lambda^{\prime}$, and is equal to the direct sum of the ordinary
representations corresponding to
the diagrams $\lambda,\lambda^{\prime}$ if $\lambda\ne\lambda^{\prime}$.

Consider the involutive automorphism
$\theta$ of the group algebra
$\mathbb C[S^+_{n}]$  of the alternating group given by the formula
$\theta(f)=s_{12}fs_{12}$, where $s_{12}$ is the transposition of the symbols
$1,2$, and the corresponding structure of a superalgebra on
$\mathbb C[S^+_{n}]$; then Lemma~\ref{epsilon} implies the following theorem.

\begin{thm}
{\rm1)} An ordinary irreducible representation of the group
$S_{n}$ remains irreducible regarded as a graded representation of the group
$A_{n}$. It is of type $Q $ if the corresponding Young diagram
$\lambda$ is self-conjugate, i.e., $\lambda=\lambda^{\prime}$; it
is of type $ M$ if the corresponding Young diagram
$\lambda$ is not self-conjugate, i.e., $\lambda\ne\lambda^{\prime}$.

{\rm2)} Let $Y(\mathbb S^+)$ be the graded branching graph of the chain
$$
\mathbb S^+=< \mathbb C=  \mathbb C[S^+_{1}]\subset\mathbb C[S^+_{2}]\subset\dots \subset \mathbb C[S^+_{n}]>
$$
of the group algebras of the alternating groups with the
$\Bbb Z_{2}$-grading introduced above,
and let $Y(|\mathbb S|)$ be the nongraded branching graph of the chain
$$
\mathbb S=<\mathbb C=  \mathbb C[S_{1}]\subset\mathbb C[S_{2}]\subset\dots \subset \mathbb C[S_{n}]>$$
 of the group algebras of the symmetric groups (i.e., the Young graph).
Then
$Y(\mathbb S^+)$ coincides with $Y(|\mathbb{S}|)$
at all levels except the first one.
\end{thm}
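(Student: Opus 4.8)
The plan is to derive both statements from Lemma~\ref{epsilon} together with the proposition identifying $\Mod(A[\varepsilon])$ with $\GMod(A)$. First I would observe that the transposition $s_{12}$ is an odd element of the sign-graded algebra $\mathbb{C}[S_n]$ satisfying $s_{12}^2=1$. Applying Lemma~\ref{epsilon} with $p=s_{12}$ then produces a graded isomorphism
\[
\mathbb{C}[S_n]\cong \mathbb{C}[S_n^+][\varepsilon],
\]
where $\varepsilon$ corresponds to $s_{12}$ and the grading induced on the even part $\mathbb{C}[S_n^+]$ is precisely the conjugation grading $\theta(f)=s_{12}fs_{12}$ introduced just before the theorem. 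Here one checks that $\mathbb{C}[S_n^+]+\mathbb{C}[S_n^+]s_{12}=\mathbb{C}[S_n]$, since every odd permutation is an even permutation times $s_{12}$.

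Granting this, the proposition identifying $\Mod(A[\varepsilon])$ with $\GMod(A)$, applied to $A=\mathbb{C}[S_n^+]$, gives an isomorphism between the category of ordinary $S_n$-modules and the category of graded $S_n^+$-modules. Since an isomorphism of categories carries simple objects to simple objects, an ordinary irreducible $S_n$-module $V_\lambda$ is sent to an irreducible graded $S_n^+$-module; this is the first assertion of part~1). To read off its type I would identify the automorphism $\varphi$ of $A[\varepsilon]$ (fixing $A$ and sending $\varepsilon\mapsto-\varepsilon$) with the sign automorphism of $\mathbb{C}[S_n]$: under $\varepsilon\mapsto s_{12}$ it fixes the even permutations and negates the odd ones, so $\varphi(\sigma)=\sgn(\sigma)\sigma$ and hence $V_\lambda^{\varphi}\cong V_\lambda\otimes\sgn\cong V_{\lambda'}$. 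The same proposition asserts that the graded module is of type $Q$ exactly when $V_\lambda\cong V_\lambda^{\varphi}$ and of type $M$ otherwise; since $V_\lambda\cong V_{\lambda'}$ if and only if $\lambda=\lambda'$, this is the stated dichotomy.

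For part~2) I would note that $\varepsilon=s_{12}$ lies in $\mathbb{C}[S_k]$ for every $k\ge 2$, so the isomorphisms $\mathbb{C}[S_k]\cong\mathbb{C}[S_k^+][\varepsilon]$ are compatible with the inclusions of the chain. Because the functors realizing the categorical isomorphism act as the identity on underlying spaces and on morphisms, they intertwine restriction along $\mathbb{C}[S_k^+]\subset\mathbb{C}[S_{k+1}^+]$ with restriction along $\mathbb{C}[S_k]\subset\mathbb{C}[S_{k+1}]$. Consequently the multiplicity spaces $\GHom$ defining the edges of $Y(\mathbb{S}^+)$ coincide with those defining the edges of the Young graph $Y(|\mathbb{S}|)$ at every level $k\ge 2$. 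The discrepancy at the first level is forced by the absence of $s_{12}$ in $S_1$: the bottom algebra $\mathbb{C}[S_1^+]=\mathbb{C}$ is purely even and carries two graded simple modules $V$ and $P(V)$, in agreement with the two bottom vertices that any graded branching graph must possess, whereas the Young graph has a single vertex there.

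The module-theoretic bookkeeping is routine; the step that deserves genuine care, and which I expect to be the only real point, is the identification of $\varphi$ with the sign automorphism together with the verification that the presentation $\mathbb{C}[S_k]\cong\mathbb{C}[S_k^+][\varepsilon]$ is compatible along the whole chain, since this is exactly what pins down both the $M/Q$ labelling and the ``all levels except the first'' clause.
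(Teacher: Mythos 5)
Your proposal is correct and follows exactly the route the paper intends: the paper's entire proof consists of setting up the conjugation grading $\theta(f)=s_{12}fs_{12}$ and invoking Lemma~\ref{epsilon}, and you have simply filled in the details of that derivation — the identification $\mathbb{C}[S_n]\cong\mathbb{C}[S_n^+][\varepsilon]$ with $\varepsilon=s_{12}$, the transfer through the categorical isomorphism $\Mod(A[\varepsilon])\cong\GMod(A)$, the recognition of $\varphi$ as the sign automorphism so that $V_\lambda^\varphi\cong V_{\lambda'}$, and the compatibility along the chain for part~2).
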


\section{Gelfand--Tsetlin algebras}

Consider a chain of $\Bbb Z_{2}$-graded algebras
 $$
\frak Y=< \mathbb C= A(1)\subset A(2)\subset\dots \subset A(n)>.
 $$

\begin{definition} Assume that the branching graph of the chain
$\frak Y$ is simple. The Gelfand--Tsetlin algebra
$GZ(\frak Y)$ is the algebra generated by the graded centers
$Z(A_{i})$, $i=1,\dots,n$.
\end{definition}

\begin{thm}
Consider a chain of $\Bbb Z_{2}$-graded algebras
 $$
\frak Y=< \mathbb C= A(1)\subset A(2)\subset\dots \subset A(n)>
 $$
with simple branching. Then the Gelfand--Tsetlin algebra
$GZ(\frak Y)$ coincides with the ordinary Gelfand--Tsetlin algebra $GZ(\frak Y_{0})$
of the chain of even subalgebras. It is a maximal commutative subalgebra among
the even commutative subalgebras in
$A(n)$. Every irreducible graded representation of the algebra
$A(n)$ has a homogeneous basis that consists of eigenvectors of this subalgebra.
\end{thm}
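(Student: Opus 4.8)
The statement has three parts: (i) $GZ(\frak Y)=GZ(\frak Y_0)$, (ii) maximality among even commutative subalgebras of $A(n)$, and (iii) the existence of a homogeneous eigenbasis. My strategy is to reduce everything to the already-established nongraded theory of \cite{OV,VO} via two bridges: the identification $Z(A_i)=Z((A_i)_0)$ (noted after the definition of the center) and Lemma~\ref{eqv1}/Proposition~\ref{eqv}, which identify the graded branching of $\frak Y$ with the nongraded branching of $\frak Y_0$. The key technical fact I will lean on throughout is that $Z(A_i)=Z(|A_i|)_0 = Z((A_i)_0)$ for semisimple $A_i$, so the generators of $GZ(\frak Y)$ are literally the same elements as the generators of the classical Gelfand--Tsetlin algebra of the even chain.

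\smallskip
\noindent\textbf{Step 1 (Part (i)).} I would first observe that by the remark following the definition of the center, each graded center $Z(A_i)$ equals the center of the even part $(A_i)_0$. Hence the generating sets of $GZ(\frak Y)$ and $GZ(\frak Y_0)$ coincide elementwise, and since both algebras are defined as the (associative) algebra generated by these same centers $Z(A_1),\dots,Z(A_n)$ sitting inside the common ambient algebra $(A(n))_0$, they are equal as subalgebras. This part is essentially formal once the center identification is in hand; the only thing to check is that the ambient multiplication agrees, which it does because $GZ(\frak Y)$ is by construction an even subalgebra and multiplication of even elements is ungraded.

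\smallskip
\noindent\textbf{Step 2 (Part (ii)).} Now I transport the maximality statement across the bridge. By Lemma~\ref{eqv1}, when the odd parts are nontrivial the graded branching graph $Y(\frak Y)$ coincides with the nongraded branching graph $Y(|\frak Y_0|)$ of the even chain, so \emph{simplicity of branching for $\frak Y$ is equivalent to simplicity of branching for the even chain $\frak Y_0$}. By the classical result of \cite{OV,VO}, simple branching of $\frak Y_0$ forces $GZ(\frak Y_0)$ to be a maximal commutative subalgebra of $(A(n))_0=A(n)_0$. Combining with Part (i), $GZ(\frak Y)$ is maximal commutative in $A(n)_0$, which is exactly the claim that it is maximal \emph{among the even commutative subalgebras} of $A(n)$. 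Here I should be slightly careful to note that essentiality of the grading (needed to apply Proposition~\ref{eqv}) follows from the hypothesis that odd parts are nontrivial, so the reduction is legitimate.

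\smallskip
\noindent\textbf{Step 3 (Part (iii)) and the main obstacle.} For the eigenbasis, let $V$ be an irreducible graded $A(n)$-module. By Proposition~\ref{eqv} (applicable since the grading is essential), $V=A(n)\otimes_{A(n)_0}V_0$ and $V_0$ is an irreducible $A(n)_0$-module; by the classical theory $V_0$ has a basis of joint eigenvectors for $GZ(\frak Y_0)=GZ(\frak Y)$, precisely because simple branching makes the spectrum of the Gelfand--Tsetlin algebra separate the Gelfand--Tsetlin paths. These eigenvectors live in $V_0$, hence are homogeneous of even parity; applying the change-of-parity functor $P$ and the odd part $V_1=A(n)_1\cdot V_0$ produces the odd eigenvectors. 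The point I expect to require the most care is that the odd vectors so obtained are still \emph{eigenvectors} of $GZ(\frak Y)$: since $GZ(\frak Y)$ is even and commutes appropriately, and $A(n)_1$ is a $GZ(\frak Y)$-module under conjugation, one checks that multiplying an even eigenvector by a suitable odd element from $A(n)_1$ again yields an eigenvector, possibly for a shifted character. I would make this precise by using the essential-grading decomposition to see that $V_1\cong V_0$ (or $P(V_0)$) as an $A(n)_0$-module and transporting the eigenbasis along that isomorphism, so that the full homogeneous basis $V_0\oplus V_1$ consists of eigenvectors of the commutative algebra $GZ(\frak Y)$. Verifying this eigenvector-transport compatibility — rather than the equality of algebras, which is formal — is the real content of the proof.
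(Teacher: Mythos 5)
Your Step~1 is correct and is exactly the fact the paper records after defining the graded center ($Z(A)=Z(A_{0})$ for finite-dimensional semisimple $A$); that part is unobjectionable. The genuine gap is in Steps~2 and~3: both are justified by citing Lemma~\ref{eqv1} and Proposition~\ref{eqv}, whose hypotheses (nontrivial odd parts, respectively essential grading) are \emph{not} hypotheses of the theorem you are proving. Indeed they can never hold for the whole chain, since $A(1)=\mathbb{C}$ is purely even; and, more seriously, nothing in the theorem prevents non-essential gradings at higher levels. You assert that essentiality ``follows from the hypothesis that odd parts are nontrivial,'' but no such hypothesis exists in the statement. The extreme case is a chain with trivial grading --- e.g.\ the Young chain $\mathbb{C}[S_{1}]\subset\dots\subset\mathbb{C}[S_{n}]$ with all elements even --- which has simple graded branching and for which the theorem must specialize to the classical result of \cite{OV,VO}; there your bridges are unavailable everywhere. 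Less degenerate examples also occur (a component of type $M(k,0)$, as in $\mathbb{C}\subset\mathbb{C}\oplus\mathbb{C}$ purely even, is compatible with simple branching), and for such components the conclusions of Lemma~\ref{eqv1} and Proposition~\ref{eqv} are actually \emph{false}, not merely unproved: $M(k,0)$ has two graded irreducibles $V$, $P(V)$ but its even part has only one irreducible, so graded modules are not equivalent to $A_{0}$-modules and the two branching graphs have different vertex sets.

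The reduction you want is still true in full generality, but it needs a different justification, obtained directly from the classification into types $M(n,m)$ and $Q(n)$: (a) for any graded irreducible $V$, the even and odd parts $V_{0}$, $V_{1}$ are irreducible or zero over $A_{0}$; and (b) $\dim {\rm GHom}_{B}(U,V)=[V_{0}:U_{0}]$ whenever $U_{0}\neq0$, so the multiset of graded branching multiplicities coincides with the multiset of even branching multiplicities, and simple graded branching is equivalent to simple even branching with no essentiality assumption. With (a) and (b) your Steps~2 and~3 go through; moreover Step~3 then simplifies, since the ``eigenvector transport'' you single out as the real content is unnecessary: $V_{1}=P(V)_{0}$ is itself an irreducible (or zero) $A_{0}$-module, so the classical theorem gives it its own eigenbasis, and the union of the bases of $V_{0}$ and $V_{1}$ is the required homogeneous eigenbasis. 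Note finally that the paper's own proof sidesteps all of this by a different route: it passes to the nongraded chain $\mathbb{C}\subset A(1)[\varepsilon]\subset\dots\subset A(n)[\varepsilon]$, where the isomorphism between graded $A$-modules and nongraded $A[\varepsilon]$-modules holds unconditionally, applies the classical theory there, and then pushes down by the homomorphism $A(n)_{0}[\varepsilon]\to A(n)_{0}$, $\varepsilon\mapsto1$, under which each $Z(A(i)[\varepsilon])$ maps onto the graded center $Z(A(i))$; that is why the paper needs no essentiality hypothesis at all.
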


\begin{proof}  Consider the chain of algebras
  $$
\mathbb C\subset \mathbb C[\varepsilon]= A(1)[\varepsilon]\subset A(2)[\varepsilon]\subset\dots \subset A(n)[\varepsilon].
 $$
Since the branching is simple, the algebra generated by
$Z(A(i+1)[\varepsilon], A(i)[\varepsilon])$,
$i=1,\dots, n-1$, is a maximal commutative subalgebra in
$A(n)[\varepsilon]$. Further, consider the homomorphism
$A(n)_{0}[\varepsilon] \rightarrow A(n)_{0} $ that sends
$\varepsilon$ to the identity. The image of every maximal
commutative subalgebra in
$A(n)_{0}[\varepsilon]$ is a maximal
commutative subalgebra in $A(n)_{0}$.
 \end{proof}

Thus, to a certain extent, the representation theory of a chain of
$\Bbb Z_{2}$-graded algebras reduces to studying representations of the
chain of the even parts of these algebras.
However, it is not always possible to describe the even part of a graded
algebra in a transparent way, so that it is useful
to introduce another version of the notion
of Gelfand--Tsetlin algebra, which is related to the notion
of supercentralizer (see Section~2) used in the theory of Lie superalgebras.

\begin{definition}Consider a chain of $\Bbb Z_{2}$-graded algebras
 $$
\frak Y=< \mathbb C= A(1)\subset A(2)\subset\dots \subset A(n)>.
 $$

{\rm1)} The algebra
 $$
SGZ(\frak Y)=<SZ(A(2),A(1)),\dots,SZ(A(n),A(n-1))>
$$
generated by the successive supercentralizers  is called the Gelfand--Tsetlin superalgebra.

{\rm 2)} Denote by $SZ(\frak Y)$ the commutative algebra generated by the
supercenters $SZ(A(1)),\dots,SZ(A(n))$.
\end{definition}

As will be shown below, the algebra
$SZ(\frak Y)$ is the supercenter of the algebra $SGZ(\frak Y)$.

In the nongraded case,
for chains with simple branching
the algebra generated by the successive centralizers coincides with the algebra
generated by the centers: $SZ(\frak Y)= SGZ(\frak Y)$.  If the
spectrum is not simple,
$SGZ(\frak Y) $ is the so-called ``big Gelfand--Tsetlin algebra.''
In the graded case, we have the following (in general, strict) inclusions
$$
SZ(\frak Y)\subset GZ(\frak Y)\subset SGZ(\frak Y),
$$
the first two algebras being commutative. The role of the Gelfand--Tsetlin
superalgebra is that the decompositions of irreducible
$A(n) $-modules into irreducible
$SGZ(\frak Y) $-modules are disjoint.
In the case of trivial grading
and simple spectrum, the algebra
$SGZ(\frak Y) $ coincides with $SZ(\frak Y)$.
The main result of this section is the following theorem.

\begin{thm}\label{SC2}
Consider a chain of $\Bbb Z_{2}$-graded algebras
 $$
 \frak Y=<\mathbb C= A(1)\subset A(2)\subset\dots \subset A(n)>.
 $$
Then

{\rm1)} The restriction of an irreducible
$A(n)$-module to the Gelfand--Tsetlin superalgebra has a simple spectrum.

{\rm2)} The decompositions of different irreducible  $A(n)$-modules into
irreducible $SGZ(\frak Y)$-modules have no common components. The irreducibility
type of a nonzero
$SGZ(\frak Y)$-module coincides with the irreducibility type of the
$A(n)$-module that contains it.
\end{thm}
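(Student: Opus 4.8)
The plan is to argue by induction on the length $n$ of the chain, peeling off the top supercentralizer $C_n:=SZ(A(n),A(n-1))$ and combining a single-step analysis with the inductive hypothesis applied to the shorter chain $\frak Y'=\langle A(1)\subset\dots\subset A(n-1)\rangle$. The structural fact that drives the induction is that $C_n$ super-commutes with all of $A(n-1)$, hence in particular with the subalgebra $SGZ(\frak Y')\subset A(n-1)$; thus $SGZ(\frak Y)=\langle C_n,SGZ(\frak Y')\rangle$ is a quotient of the graded tensor product $C_n\otimes SGZ(\frak Y')$, and its modules can be studied through the joint action of two super-commuting subalgebras. I also record the inclusions $SZ(A(k))\subset SZ(A(k),A(k-1))\subset SGZ(\frak Y)$, valid because an element super-commuting with all of $A(k)$ super-commutes in particular with $A(k-1)$; in particular $SZ(A(n))\subset SGZ(\frak Y)$, which will be the source of the separation in part~2.

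First I would treat the single step $A(n-1)\subset A(n)$. Restricting the irreducible graded module $V$ to the semisimple graded algebra $A(n-1)$ and grouping the simple summands into isotypic blocks, I would use the super-analog of Schur's lemma — the graded endomorphism algebra of a simple module is $\mathbb C$ for type $M$ and the Clifford superalgebra $Q(1)$ for type $Q$ (cf.\ the Examples of Section~3) — together with the super double-centralizer theorem for the semisimple pair $A(n-1)\subset A(n)$. This identifies the image of $C_n$ in $\End(V)$ with a product of simple graded algebras $M(a_W,b_W)$ and $Q(c_W)$ indexed by the simple $A(n-1)$-constituents $W$ of $V$, and exhibits $V$ as a multiplicity-free module over the super-commuting pair $A(n-1),C_n$: each simple graded $A(n-1)\text{-}C_n$-bimodule occurs exactly once.

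With this in hand I would run the induction. By the inductive hypothesis $SGZ(\frak Y')$ acts on each simple $A(n-1)$-constituent $W$ with simple spectrum, while $C_n$ resolves the multiplicity spaces; since $C_n$ and $SGZ(\frak Y')$ super-commute and $V$ is multiplicity-free as a bimodule, the joint action of $SGZ(\frak Y)=\langle C_n,SGZ(\frak Y')\rangle$ has simple spectrum, which is part~1. The delicate point, and the step I expect to be the main obstacle, is the passage from the two super-commuting factors to their product: simple modules over a graded tensor product are not outer tensor products of simple modules of the factors, because two type $Q$ factors fuse (the rule $Q(n)\otimes Q(m)=M(nm,nm)$, and likewise $M\otimes Q=Q$ and $M\otimes M=M$, from the Lemma of Section~2). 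One must track these fusion rules carefully both to confirm that the joint simple constituents still occur with multiplicity one and, simultaneously, to determine their type.

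Finally, for part~2 I would use the inclusion $SZ(A(n))=Z(|A(n)|)_{0}\subset SGZ(\frak Y)$. Distinct simple graded components of $A(n)$ carry distinct even central characters, so $SGZ$-constituents coming from different components cannot coincide. Within a single component of type $M$ one must still separate $V$ from its antipode $P(V)$; here the type assertion does the work, for if every simple $SGZ(\frak Y)$-constituent $N$ of $V$ is of type $M$, then $N\not\cong P(N)$, and since the $SGZ$-decomposition of $P(V)$ is obtained by applying the parity functor $P$ to that of $V$, the two share no constituent. For a component of type $Q$ one has $V\cong P(V)$, so nothing further need be separated, and the matching of types — established together with part~1 through the fusion analysis above — completes the proof.
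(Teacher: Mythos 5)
Your overall skeleton (a one-step analysis of a pair of algebras plus an induction along the chain, driven by the super double-centralizer theorem) is parallel to the paper's, which peels the chain from the bottom rather than the top; that difference is inessential. The genuine gap is that the step you yourself flag as ``the main obstacle'' --- controlling multiplicities and types through the fusion of type $Q$ factors --- is exactly where the theorem lives, and your framework cannot simply ``track'' it, because the outer-product picture on which your induction rests is false at precisely that point. If an $A(n-1)$-constituent $W$ of $V$ and its multiplicity module $M_W$ over $C_n=SZ(A(n),A(n-1))$ are both of type $Q$, the $W$-isotypic component of $V$ is \emph{not} $W\boxtimes M_W$: that outer product is reducible, isomorphic to $X\oplus P(X)$ with $X$ of type $M$, and the isotypic component is only ``half'' of it. Already for $B=Q(1)\subset A=M(1,1)$, with $SZ(A,B)\cong Q(1)$, the irreducible $A$-module $V=\mathbb C^{1|1}$ is simple over the image of $B\otimes SZ(A,B)\cong M(1,1)$ but has half the dimension of the outer product of the standard modules of the two factors. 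So ``each simple bimodule occurs exactly once'' cannot be combined with ``$SGZ(\frak Y')$ resolves $W$, $C_n$ resolves the multiplicity space'' in the way you propose. The paper's proof contains the device your proposal lacks: Lemma~\ref{chain} works with the canonical super-isotypic subspace $I_U(V)$ (the sum of all $B$-submodules isomorphic to $U$ or $P(U)$) and proves it is irreducible over $C=\langle SZ(A,B),B\rangle$ by computing the supercommutant of $C$ in $\mathrm{End}(I_U(V))$, using the super bicommutant Lemma~\ref{dct} and the quotient Lemma~\ref{quotient}; the answer is $\mathbb C$ or $\mathbb C[\pi]$, and irreducibility, multiplicity one, and the type assertion all come out together with no fusion case analysis. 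Without this (or an honest substitute), your induction does not close.

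There is a second, independent hole in your part~2, in the separation of $V$ from $P(V)$ inside a type $M$ component. You argue: every constituent $N_i$ of $V$ is of type $M$, hence $N_i\not\cong P(N_i)$, hence $V$ and $P(V)$ share no constituent. The last inference is invalid: the constituents of $P(V)$ are the $P(N_j)$, so a common constituent means $N_i\cong P(N_j)$ for some $i,j$, and your type argument excludes only $i=j$. Nothing in ``simple spectrum'' plus the type assertion forbids a configuration $V\supset X\oplus P(X)$ with $X$ of type $M$ (the two summands are non-isomorphic and both of type $M$), and exactly this configuration would make $V$ and $P(V)$ share constituents. What is needed is the stronger uniqueness statement that the paper extracts from Lemma~\ref{chain} and its corollary: each constituent is of the form $I(V_1,\dots,V_{n-1},V_n)$ and its isomorphism class determines the entire tuple, \emph{including} the last entry, which distinguishes $V$ from $P(V)$; this disjointness is obtained there from the algebraic identity $SZ(A,C)=SZ(C)$ via Lemma~\ref{simple}, not from the type of the constituents. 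In your top-down induction the analogous strengthening would have to be built into the inductive hypothesis and reproved at each step, which again runs into the fusion problem above.
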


 \begin{proof} The proof is based on a series of lemmas.

\begin{lemma}\label{dct} Let
$V$ be a finite-dimensional graded vector space,
$A\subset \End(V)$ be a semisimple graded subalgebra of $\End(V)$,
and
$A^{\prime}$ be its supercentralizer. Then
$(A^{\prime})^{\prime}=A$.
\end{lemma}

The proof of the lemma
is contained in \cite{NS}. It is similar to von Neumann's theorem
on the bicommutant of subalgebras in simple algebras, and can be
proved analogously.

\begin{lemma}\label{simple}
Let $A $ be a $\Bbb Z_{2} $-graded algebra and $ B$ be a
$\Bbb Z_{2}$-graded subalgebra of $A$. Then the equality
$SZ(A,B)=SZ(B)$  holds if and only if every irreducible $A$-module
decomposes into a multiplicity-free sum of simple $B$-modules
and different irreducible $A$-modules have no common irreducible $B$-components.
Moreover, every irreducible $B$-component has the same irreducibility
type as the irreducible $A$-module that contains it.
\end{lemma}

\begin{proof}
Assume that every irreducible $A$-module decomposes
into a multiplicity-free sum of simple $B$-modules
and different irreducible $A$-modules have no common irreducible $B$-components.
Let
$V_{1},\dots,V_{q}$ be the complete set of pairwise nonisomorphic
irreducible $A$-modules,
$U_{1},\dots, U_{p}$  be all different $B$-modules, and
$b_{1},\dots, b_{p}$ be the corresponding central idempotents. Let
$a\in SZ(A,B)$.  By the assumptions of the lemma, $a$ is an even element.
Since every irreducible $A$-module decomposes
into a multiplicity-free sum of simple $B$-modules, the element
$a$ preserves each module $U_{j}$  and acts in it as a scalar
$c_{j}$.  Since different irreducible $A$-modules have no common irreducible $B$-components,
the difference $a-c_{1}b_{1}-\dots-c_{p}b_{p}$ vanishes in every irreducible
$A$-module. This proves that
$SZ(A,B)=SZ(B)$.

Conversely, assume that $SZ(A,B)=SZ(B)$. Therefore
$SZ(A)\subset SZ(B)$. Let $e_{1},e_{2}$
be two different central idempotents in
$A$ corresponding to simple $A $-modules $V_{1},V_{2}$; then
$$
e_{1}=c_{1}b_{1}\dots+c_{p}b_{p},\quad e_{2}=d_{1}b_{1}+\dots+d_{p}b_{p}.
$$
The condition $e_{1}e_{2}=0$ implies that $c_{i}d_{i}=0$ for $i=1,\dots,p$.
It follows that the modules
$V_{1},V_{2}$ have no common irreducible $B$-components. Moreover,
the equalities
$e_{1}^2=e_{1}$, $e_{2}^2=e_{2}$ imply that
$c_{i},d_{i}=0,1$. This proves that
every irreducible $A$-module decomposes
into a multiplicity-free sum of simple $B$-modules.
The claim concerning the irreducibility type follows from the fact that
$SZ(A,B)$ contains only even elements.
\end{proof}

 \begin{lemma}\label{quotient} Let  $f: A\longrightarrow B$
 be a homomorphism of semisimple superalgebras and $C$
be a simple subsuperalgebra in $A$. Then
$$
f(SZ(A,C))=SZ(f(A),f(C)).
$$
\end{lemma}

\begin{proof} Clearly, $f(SZ(A,C))\subset SZ(f(A),f(C))$.
Let us prove the converse inclusion.
Let $b=f(a)\in SZ(f(A),f(C))$. Since $A $
is a semisimple superalgebra,
$\ker f=eA$, where
$e$ is a central idempotent. We have
$ ac-(-1)^{p(a)p(c)}ca\in eA$ for every $c\in C$. Hence
$a(1-e)c=(-1)^{p(a)p(b)}c(1-e)a$, also for every $c\in C$. Therefore
$a(1-e)\in SZ(A,C)$. Hence
 $f(a)=f(a(1-e))$.
\end{proof}

\begin{lemma}\label{chain}
Let $A$ be a semisimple graded algebra, $B$ be a semisimple graded
subalgebra of $A$, and $C$ be a graded subalgebra in $A$ generated by
$SZ(A,B)$ and $B$. Given an irreducible $A$-module $V$ and an irreducible
$B$-module $U$, denote by $I_{U}(V)$ the sum of all $B$-submodules in $V$
isomorphic to $U$ or $P(U)$. Then
$I_{U}(V)$ is an irreducible $C$-module, and every irreducible $C$-module
is of this form. Besides, two nonzero modules of this form
are isomorphic if and only if
$U=U^{\prime}$, $V=V^{\prime}$.
\end{lemma}

\begin{proof}
By Lemma~\ref{quotient}, we can replace the algebras $ A$ and
$ B$ by their images in $\End(V)$.
Clearly,
$I_{U}(V)$ is a $C$-module. Besides, the algebras
$SZ(A,B)$ and $ B$ are semisimple. Hence their tensor product is also
a semisimple algebra, and $C$, being the
homomorphic image of this tensor product,
is also semisimple. Further, we have a canonical decomposition
$V=W\oplus I_{U}(V)$, where $W$ is the sum of all $B$-submodules in
$V$ that are not isomorphic to $U$ or $P(U)$. Thus we have a canonical
homomorphism $\End(V)\rightarrow \End( I_{U}(V))$. Hence, in order to prove the
irreducibility of  $I_{U}(V)$, it suffices to compute the supercentralizer of $C$
in $I_{U}(V)$. Further, $V$ is an irreducible $A$-module, hence there are
two possibilities: $\End(V)=A$ and $\End(V)=A[\pi]$,
where $\pi$ is an odd element from $\End(V)$ that supercommutes with
$A$. In the first case, by Lemma~\ref{dct}, the supercentralizer of
$C$ in $\End(V)$  coincides with the center of $B$. Hence, by Lemma~\ref{quotient},
the centralizer of $C$ in $I_{U}(V)$ coincides with $\mathbb{C}$. In the second case,
the same arguments show that the supercentralizer of $C$ in
$I_{U}(V)$ coincides with $\mathbb{C}[\pi]$.

Conversely, let us prove that every
irreducible module is of this form. Let $W$ be an irreducible $C$-module and
$U$ be an irreducible $B$-submodule of $W$. Consider the induced module
$A\otimes_{C}W$ and the irreducible component $V$ of this module that contains $W$.
Then, by the above, the module $I_{U}(V) $ is irreducible and hence coincides
with $W $.

It remains to prove only that the condition
 $I_{U}(V)=I_{U^{\prime}}(V^{\prime})$ implies
$(U,V)=(U^{\prime}, V^{\prime})$. This assertion is equivalent to the fact
that different irreducible $A$-modules have no common irreducible $C$-components.
By Lemma~\ref{simple}, this is equivalent to the condition
$SZ(A,C)=SZ(C)$.  But it is easy to see from the definition that
$SZ(A,C)=SZ(B^{\prime})$, where $B^{\prime}=SZ(A,B)$. Hence it suffices to show that
$SZ(B^{\prime})=SZ(C)$. But the algebra $C$ is semisimple, and it is a homomorphic
image of the algebra $B^{\prime}\otimes B$, which is also semisimple
with supercenter equal to  $SZ(B)^{\prime}\otimes SZ(B)$. Hence the supercenter
of $C$ is equal to the product of the supercenters of  $B^{\prime}$ and $B$.
But it is easy to see that $SZ(B)\subset SZ(B^{\prime})$, whence $SZ(C)=SZ(B^{\prime})$.
\end{proof}

\begin{corollary}Consider a chain of
$\Bbb Z_{2}$-graded semisimple finite-dimensional algebras
$$
 \frak Y=<A(1)\subset\dots\subset A(n-1) \subset A(n)>
$$
(we do not assume that $A(1)=\mathbb {C}$).
Also let $V_{1},\dots,V_{n-1},V_{n}$ be irreducible modules over the algebras
$A(1),\dots,A(n-1),A(n)$, respectively, and
$ SGZ(\frak Y)$ be the algebra generated by the supercentralizers
$SZ(A(i+1),A(i))$, $i=1,\dots, n-1$, and the algebra
$A(1)$. Then the algebra
$ SGZ(\frak Y)$
is semisimple.

Denote by $I(V_{1},V_{2},\dots, V_{n})$
the subspace $I_{V_{1}}(\dots( I_{V_{n-1}}(V_{n})\dots) )$.
 Then the nonzero subspaces
$$
I(V_{1},V_{2},\dots, V_{n})
$$
are irreducible $ SGZ(\frak Y)$-modules, and every irreducible
$ SGZ(\frak Y)$-module is of this form. Moreover, if
 $$
  I(V_{1},\dots ,V_{n-1},V_{n} )= I(V^{\prime}_{1},\dots,V^{\prime}_{n-1},V^{\prime}_{n} ),
 $$
then $V_{1}=V_{1}^{\prime},\dots ,V_{n}=V_{n}^{\prime}$.
\end{corollary}

\begin{proof}
Induction on $n$. If $n=2$, then this is the assertion of the previous lemma.
Let $n>2$. Denote by $C$ the subalgebra in
$A(n) $ generated by  $SZ(A(i+1),A(i))$, $i=2,\dots, n-1$, and by $B$
the algebra generated by $A(2)$ and $C $. Consider the chain
$A(1)\subset B$. By the induction hypothesis, the algebra $C$ is semisimple,
hence the tensor product $A(2)\otimes C $ is also semisimple. Further, it is
easy to check that the supercentralizer of the algebra
$A(1)$ in the algebra $A(2)\otimes C
$ equals $SZ(A(2), A(1))\otimes C$. Since the algebra $B$
is a homomorphic image of the algebra
$A(2)\otimes C $, it follows from Lemma~\ref{quotient} that
$SZ(B,A(1))$ equals $ SGZ(\frak Y)$. Now we can use Lemma~\ref {chain}.
\end{proof}

Now let us prove the theorem. To this end, we first prove that the supercenter
of the algebra $ SGZ(\frak Y)$ is generated by the supercenters of the algebras
$A(1),\dots, A(n)$. Let $W$ be an irreducible $ SGZ(\frak Y)$-module and $z_{W}$
be the
element of the supercenter that acts as
$1$ in the modules $W,\: P(W)$ and acts as $0$ in all the other irreducible modules.
By the previous lemma,
$W= I_{V_{1}}(\dots( I_{V_{n-1}}(V_{n})\dots) )$. Let
$z_{V_{i}}$, $i=1,\dots, n$, be the elements similar to $z_{W}$.
Then the difference $z_W-z_{V_1}{\ldots} z_{V_n}$
acts as $0$ in every irreducible
$ SGZ(\frak Y)$-module and hence vanishes. This proves that the supercenter
of $ SGZ(\frak Y)$ is generated by the supercenters of $A(1),\dots,A(n)$.
Since irreducible representations of
$ SGZ(\frak Y)$, regarded  up to
the functor $P$,
are in a bijection with homomorphisms of
the supercenter of $ SGZ(\frak Y)$,
the theorem follows.
 \end{proof}

In the case of a simple branching,
a description of subspaces
of irreducible modules over the Gelfand--Tsetlin superalgebra
can be given in terms of the corresponding branching graph.

\begin{definition} Assume that the branching graph of a chain
$\frak A$ is simple. In this case, for every path
$$
T=\lambda_{1}\nearrow\dots\nearrow\lambda_{i}\nearrow\lambda_{i+1}
\nearrow\dots\nearrow\lambda_{n}
$$
in the branching graph there is a vector
$v_{T}$, unique up to a nonzero constant, in the corresponding
irreducible representation. This vector is called the Gelfand--Tsetlin vector.
The set of Gelfand--Tsetlin vectors forms a basis, which is called
the Gelfand--Tsetlin basis.
\end{definition}

\begin{definition} Two vectors $v,w$ of the Gelfand--Tsetlin basis
are called equivalent if for the corresponding paths
$$
v\subset A(1)v\subset A(2)v\subset\dots\subset A(n)v,\quad w\subset
A(1)w\subset A(2)w\subset\dots\subset A(n)w
$$
in the branching graph, the following equations hold:
$ A(n)w= A(n)v$, and for every $1\le i\le n-1$,
either $A(i)w=A(i)v$ or $P(A(i)w)=A(i)v$.
\end{definition}

\begin{lemma} Equivalent vectors of the Gelfand--Tsetlin basis
determine the same homomorphism $\chi : SZ(\frak A)\rightarrow\mathbb{C}$
of the supercenter of the algebra
$SGZ(\frak A)$. The linear span of these equivalent vectors
coincides with $V_{\chi}$.
\end{lemma}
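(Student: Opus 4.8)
The plan is to reduce everything to the way the supercenter $SZ(\frak A)$ acts on the Gelfand--Tsetlin basis, and to translate the combinatorial equivalence relation into equality of the resulting characters. First I would record the structure of the generators of $SZ(\frak A)$. Since each $A(i)$ is semisimple, its supercenter is the even part of the ordinary center, $SZ(A(i))=Z(|A(i)|)_{0}$; this space is spanned by the unit idempotents $e^{(i)}_{t}$ of the simple graded components $t\in\tilde Y_{i}$ of $A(i)$, one idempotent per $\omega$-orbit of vertices at level $i$ of the branching graph. Consequently $SZ(\frak A)$ is generated as an algebra by the family $\{e^{(i)}_{t}\}$, and crucially $e^{(i)}_{t}$ does not separate a vertex from its parity-flip $P$: it is the identity of the whole $\omega$-orbit $t$.

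Next I would compute the action of these generators on a Gelfand--Tsetlin vector $v_{T}$, where $T=\lambda_{1}\nearrow\dots\nearrow\lambda_{n}$. Because the branching is simple, $A(i)v_{T}$ is the simple $A(i)$-module $\lambda_{i}$, so $e^{(i)}_{t}v_{T}=\delta_{t,[\lambda_{i}]}v_{T}$, where $[\lambda_{i}]$ denotes the $\omega$-orbit of $\lambda_{i}$. Since $SZ(\frak A)\subset GZ(\frak A)$ and the Gelfand--Tsetlin basis consists of eigenvectors of $GZ(\frak A)$, the vector $v_{T}$ is a simultaneous eigenvector of $SZ(\frak A)$, and the character $\chi_{v_{T}}$ defined by $z\,v_{T}=\chi_{v_{T}}(z)v_{T}$ depends only on the orbit sequence $([\lambda_{1}],\dots,[\lambda_{n}])$. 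Comparing with the definition of equivalence, at the top level $A(n)w=A(n)v$ forces the same vertex and hence the same orbit $[\mu_{n}]=[\lambda_{n}]$, while at each intermediate level the alternative $A(i)w=A(i)v$ or $P(A(i)w)=A(i)v$ says exactly $[\mu_{i}]=[\lambda_{i}]$. Thus equivalent vectors have identical orbit sequences and therefore the same character $\chi$, which proves the first assertion.

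For the second assertion I would work inside the fixed irreducible module $V=A(n)v$. Since $SZ(\frak A)$ is commutative and semisimple, it acts diagonally on the Gelfand--Tsetlin basis of $V$, so the $\chi$-eigenspace $V_{\chi}$ is spanned by those $v_{T}\in V$ with $\chi_{v_{T}}=\chi$, i.e.\ with orbit sequence equal to that of $v$; by the previous paragraph these are precisely the vectors equivalent to $v$. To identify this eigenspace with the irreducible $SGZ(\frak A)$-module of the corollary to Lemma~\ref{chain}, I would use that $SZ(\frak A)$ is the supercenter of $SGZ(\frak A)$ and that, by Theorem~\ref{SC2} and Lemma~\ref{simple}, distinct irreducible $SGZ(\frak A)$-modules carry distinct supercentral characters. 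Hence $V_{\chi}=I(V_{1},\dots,V_{n})$, and reading off its Gelfand--Tsetlin content level by level through the inductive definition $I_{V_{i}}(\dots)$ (each step selecting the submodules isomorphic to $V_{i}$ or $P(V_{i})$, i.e.\ the $v_{T}$ whose level-$i$ vertex lies in the orbit $[V_{i}]$) recovers exactly the equivalence class of $v$. Therefore the span of the equivalence class equals $V_{\chi}$.

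The step I expect to demand the most care is the bookkeeping of the $\omega$-orbits: matching the two alternatives $A(i)w=A(i)v$ and $P(A(i)w)=A(i)v$ of the equivalence relation to the single condition $[\mu_{i}]=[\lambda_{i}]$ that the idempotents can detect, and checking that $e^{(i)}_{t}$ is genuinely blind to the functor $P$ (this is exactly the fact that $SZ(A(i))$ is the span of the even central idempotents, one per orbit). One must also keep straight that the top level uses honest equality of modules whereas the intermediate levels only use orbit equality; since we have fixed $V=A(n)v$, the top vertex is constant and this distinction is harmless, but it is the natural place to slip.
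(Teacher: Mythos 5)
Your proof is correct and follows essentially the same route as the paper: your orbit idempotents $e^{(i)}_{[\lambda_i]}$ are exactly the elements $z_{i}$ constructed in the paper's proof, and your observation that the character $\chi$ detects the orbit sequence level by level (hence separates equivalence classes) is precisely the paper's argument that the product $z_{1}\cdots z_{n}$ realizes the projection onto the equivalence class as an element of $SZ(\frak Y)$. The further identification of $V_{\chi}$ with $I(V_{1},\dots,V_{n})$ via Theorem~\ref{SC2} is a harmless addition the paper does not need.
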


\begin{proof}
 Let $v,w$ be equivalent Gelfand--Tsetlin vectors
 and  $z\in Z(A(i))$ for $1\le i\le n$. Since the irreducible modules
$A(i)v$ and $A(i)w$ differ only by the parity, $z$ acts on $v$ and $w$ by the
same scalar. Therefore the vectors $v,w$ determine the same homomorphism
$\chi : SZ(\mathfrak Y)\rightarrow \mathbb C$. Now let us prove the
converse.
It is enough to prove the  following claim: if $a$ acts as $1$ on every
vector of the Gelfand--Tsetlin basis that is equivalent to $v$ and acts as $0$
on every nonequivalent vector, then $a\in SZ(\mathfrak Y)$.  Let
$$
v\subset A(1)v\subset A(2)v\subset\dots\subset A(n)v
$$
be the path corresponding to the vector $v$. Since the branching is simple,
we can find $z_{1},\dots,z_{i},\dots,z_{n}$ such that $z_{i}$ acts as $1$ in
$A(i)v,\:P(A(i)v)$ and acts as $0$ in every irreducible module nonisomorphic to $A(i)v,\: P(A(i)v)$. Then the product $z_{1}\dots z_{n}$ acts in the same way as $a$ in every irreducible $A(n)$ module. Therefore $a=z_{1}\dots z_{n}$.\end{proof}

\begin{corollary} If the branching of a chain of graded algebras is simple,
then the corresponding Gelfand--Tsetlin algebra is the direct sum
of Clifford algebras.
\end{corollary}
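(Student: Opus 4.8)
The plan is to exploit that $SGZ(\frak Y)$ is semisimple (this is the content of the corollary following Lemma~\ref{chain}), so by Theorem~\ref{semisimple} it is a direct sum of simple graded algebras, which by Theorem~\ref{structure} are of type $M(a,b)$ or $Q(c)$. Among these the Clifford algebras are exactly $C_{2k}\cong M(2^{k-1},2^{k-1})$ and $C_{2k-1}\cong Q(2^{k-1})$, as one reads off from the multiplication rules of the tensor-product lemma of Section~2 (for instance $Q(1)\otimes Q(1)=M(1,1)$). Hence it suffices to prove that every simple component $B_\chi$ is of one of these two forms, i.e. that its simple module has dimension a power of $2$ and, in the type-$M$ case, balanced graded dimension $(2^{k-1},2^{k-1})$.

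First I would pin down the components. By the preceding lemma and Theorem~\ref{SC2}, for a simple branching the simple components of $SGZ(\frak Y)$ are indexed (up to the parity functor $P$) by the characters $\chi$ of the supercenter $SZ(\frak Y)$; the component $B_\chi$ acts irreducibly on $V_\chi$, the linear span of the Gelfand--Tsetlin vectors equivalent to a fixed path $v=(\lambda_1\nearrow\dots\nearrow\lambda_n)$, and by Theorem~\ref{SC2}(2) the type of $B_\chi$ equals the type of the top vertex $\lambda_n$. Thus $\dim V_\chi$ is the number of paths $w$ that agree with $v$ at level $n$ and agree with $v$ up to the involution $\omega$ at the intermediate levels. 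I would next argue that the admissible patterns of flips $\lambda_i\mapsto\omega\lambda_i$ at the type-$M$ intermediate vertices form a coset of a subgroup of $(\mathbb Z_2)^{m}$, where $m$ is the number of such vertices, using the $\omega$-invariance of the edge multiplicities; this makes $\dim V_\chi$ a power of $2$.

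It remains to force balance when $\lambda_n$ is of type $M$. Here I would use the parity operator: by Remark~\ref{chet} the element $J_{A(n)}\in Z(A(n))\subset GZ(\frak Y)\subset SGZ(\frak Y)$ acts on $V_\chi$ as the grading operator, and any odd element anticommutes with it. Since $A(1)=\mathbb C$ gives $SZ(A(2),A(1))=A(2)\subset SGZ(\frak Y)$, the relevant simple summand of $A(2)$, which acts on the level-$2$ constituent of $V_\chi$, is forced by the simple branching over $\mathbb C$ to be $Q(1)$ or $M(1,1)$, so it contains an odd element $u$ acting invertibly on $V_\chi$; then $uJ_{A(n)}=-J_{A(n)}u$ shows that $u$ carries $(V_\chi)_0$ isomorphically onto $(V_\chi)_1$, whence $\dim(V_\chi)_0=\dim(V_\chi)_1$ and $B_\chi\cong M(2^{k-1},2^{k-1})=C_{2k}$. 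Type-$Q$ modules are automatically balanced, giving $B_\chi\cong Q(2^{k-1})=C_{2k-1}$. Summing over $\chi$ then yields the asserted decomposition.

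The step I expect to be the real obstacle is the dimension bookkeeping of the second paragraph, namely establishing that $\dim V_\chi$ is exactly a power of $2$. Because the flips at consecutive type-$M$ levels are coupled, flipping one vertex by $\omega$ may force an adjacent flip in order to remain a genuine path, one cannot treat the levels independently, and the power-of-two count has to be extracted from the group structure of the $\omega$-symmetry of the branching graph; locating the invertible odd element used for the balance step requires the same careful analysis. Everything else reduces either to semisimplicity, already in hand, or to the short parity computation above.
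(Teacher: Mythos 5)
Your reduction (components of $SGZ(\frak Y)$ indexed by characters of $SZ(\frak Y)$ acting on the spans $V_\chi$ of equivalent Gelfand--Tsetlin vectors; type of the component equal to the type of the top vertex; then a power-of-two count plus a balance argument) is a legitimate route, and your second step is essentially correct: by $\omega$-invariance of the graph, the condition for a flip pattern $(\epsilon_i)$ to define a genuine path is a conjunction of conditions each depending only on $\epsilon_{i-1}+\epsilon_i\in\mathbb Z_2$, so the admissible patterns form a subgroup of $(\mathbb Z_2)^m$ (not merely a coset --- the zero pattern is admissible), cut out by linear equations over $\mathbb F_2$; hence $\dim V_\chi$ is a power of $2$. (A small edge case: a one-dimensional type-$M$ component is $M(1,0)\cong\mathbb C=C_0$, which your formula $M(2^{k-1},2^{k-1})$ does not cover.)

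The genuine gap is the balance step. Simple branching over $A(1)=\mathbb C$ does \emph{not} force the simple summands of $A(2)$ to be $Q(1)$ or $M(1,1)$: it only forces their graded dimensions to lie in $\{(1,0),(0,1),(1,1)\}$, and purely even summands $M(1,0)$, $M(0,1)$ are unavoidable whenever the grading of $A(2)$ is trivial. This is not a vacuous worry: take $A(2)=\mathbb C\oplus\mathbb C$ with the trivial grading, embedded as the even diagonal matrices in $A(3)=Q(2)$, and $A(4)=M(2,2)\supset Q(2)$. The branching is simple, the top vertex is of type $M$, and each class $V_\chi$ is two-dimensional (the two equivalent paths differ by a flip at levels $1$ and $2$), so balance genuinely has to be proved; yet $A(2)$ contains no odd element at all, so the element $u$ your argument requires does not live where you look for it. The repair is to extract the odd element at a level where a flip actually occurs: if two equivalent paths pass through $U$ and $P(U)\ne U$ at level $i$, the grading involution of $U$ is an \emph{odd} isomorphism $U\to P(U)$ supercommuting with $A(i)$ (super Schur lemma), and it gives an odd element of the supercentralizer of the image of $A(i)$ in $\End(V)$, hence by Lemma~\ref{quotient} the image of an odd element of $SZ(A(i+1),A(i))\subset SGZ(\frak Y)$ whose square acts on $V_\chi$ as a nonzero scalar; being odd and invertible on $V_\chi$, it maps $(V_\chi)_0$ isomorphically onto $(V_\chi)_1$ (your $J_{A(n)}$ computation, or simply the definition of an odd operator). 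In fact, running this construction at every level yields the cleaner argument that makes both of your counting steps unnecessary: the image of $SGZ(\frak Y)$ in $\End(V_\chi)$ is generated by scalars together with finitely many odd elements having scalar squares, and these odd elements pairwise anticommute automatically, since an odd element of $SZ(A(j+1),A(j))$ supercommutes with $A(j)\supset A(i+1)$ for $j>i$; a semisimple superalgebra acting irreducibly and generated by such elements is precisely a Clifford algebra, which gives the power-of-two dimension and the balance simultaneously. This structural argument, rather than the combinatorial count, is what the paper's statement of the corollary (immediately after the lemma on equivalent vectors, with Lemmas~\ref{quotient} and~\ref{chain} in hand) is implicitly relying on.
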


\section{Projective representations of the symmetric groups}

I.~Schur \cite{Shc} proved that the symmetric group $S_n$ has only one
nontrivial central extension and suggested a method for finding projective
representations of $S_n$. This central extension,
which we will denote by  $\tilde{S_n}$, is  a
$\Bbb Z_2$-graded group in the sense defined above. However,
the corresponding grading of the group algebra of $\tilde{S_n}$
is more complicated than in the
case of ${\Bbb C}[S_n]$ considered above.
Proper
projective
representations of the symmetric group coincide with representations
of some ${\Bbb Z}_2$-graded algebra $\frak A_{n}$, which is described below and
which is the ``half'' of the group algebra of
$\tilde{S_n}$, more exactly, the quotient of ${\Bbb C}[\tilde{S_n}]$
modulo the ``half'' ideal.
The second ideal, complementary to the first one,
is the image of the group algebra
${\Bbb C}[S_n]$ under the (non-identity-preserving) embedding,
and the quotient of ${\Bbb C}[\tilde{S_n}]$ modulo this ideal coincides with
${\Bbb C}[S_n]$. Here both quotient algebras, ${\Bbb C}[S_n]$ and $\frak A_{n}$,
inherit the $\Bbb Z_2$-grading of the algebra ${\Bbb C}[\tilde{S_n}]$.
It follows from the above considerations that representations of every
${\Bbb Z}_2$-graded algebra with an essential grading are in a bijective
correspondence with representation of its even part. However, the even part
of $\frak A_{n}$ still has no convenient model. Hence, in order
to describe its representations, we use the notions
of graded representations of the Gelfand--Tsetlin superalgebra $SGZ(\frak Y)$
introduced above.
We reproduce the method of constructing the representation theory
of the symmetric groups suggested in \cite{OV,VO}. Namely, we construct analogs
of the YJM-elements, spectra, etc., and finally obtain the known list
of projective representations of the symmetric groups.

We consider two algebras related to projective representations
of the symmetric group.
The first one is the algebra $\frak A_{n}$ described below,
and the second one is the (graded) tensor product
of the Clifford algebra $\mathcal{C}_{n}$ with $n$ generators and
the algebra $\frak{A}_{n}$. For $\frak A_{n}$, Young's orthogonal form
was constructed in \cite{N1}; and for
$\mathcal{C}_{n}\otimes\frak{A}_{n}$, Young's orthogonal and seminormal
forms were constructed in \cite{N2}. We construct seminormal forms
for both algebras. It turns out that for
$\mathcal{C}_{n}\otimes\frak{A}_{n}$ Young's formulas look simpler and
coincide with the formulas obtained by M.~Nazarov in
 \cite{N2} by other methods.

Thus graded representations of $\frak{A}_{n}$ are in a one-to-one
correspondence with nongraded proper projective representations
of the alternating group, or, which is the same,
with representations of a central extension of the alternating group
in which the central element acts in a nontrivial way.

Let us proceed to a more detailed description of the groups and algebras
under consideration.
The group $\tilde S_n$ is given by generators $\tilde \sigma_k$,
$k=0,1,\dots ,n-1$, satisfying the  relations
$$\tilde{\sigma_k}^2=1,\quad \tilde\sigma_k \tilde\sigma_0=\tilde \sigma_0\tilde\sigma_k, \quad
 (\tilde \sigma_k\tilde \sigma_{k+1})^3=1, \quad k=0,1,\dots ,n-1,$$
$$\tilde\sigma_i\tilde\sigma_k=\tilde\sigma_0\tilde\sigma_k\tilde\sigma_i,
\quad i,k=0,1,\dots ,n-1.$$

The grading is defined as follows: the identity element is assumed to be even,
and all the elements $\tilde\sigma_k$, $k>0$, are assumed to be odd. Note that
$\tilde\sigma_0$ is even. The quotient
of the group $\tilde S_n$ over its center
${\Bbb Z}_2=\{\tilde \sigma_0\}$ is the symmetric group, so that
$\tilde{S_n}$ is a ${\Bbb Z}_2$-extension of $S_n$,
but this extension is not trivial. The group algebra
${\Bbb C}[{\tilde S}_n]$ decomposes into the sum
of two ideals. The first one is generated by the relation
$\tilde\sigma_0-1=0$, and the quotient modulo this ideal is the group algebra
of the symmetric group with the ordinary parity grading. The second
one is generated by the relation $\tilde\sigma_0+1=0$
and does not corresponds to any normal subgroup
of $\tilde{S_n}$; the quotient modulo this ideal is, by definition,
a graded algebra $\frak{A}_{n}$, which is not a group algebra.
Representations of this algebra are exactly proper projective representations
of the symmetric group. Note also that the normal subgroup that is the
even part of $\tilde{S_n}$ is the restriction of the
central extension of
$S_n$ to the alternating group $S_n^+$, i.e., a ${\Bbb Z}_2$-extension of the
alternating group; this extension is also nontrivial.

Thus the algebra $\frak{A}_{n}$ is generated by elements
$\tau_{1},\dots,\tau_{n}$ that are the images of the generators
$\tilde\sigma_k$, $k>0$, of   $\tilde{S_n}$ and satisfy the relations
\begin{equation}\label{relation}
\tau_{k}^2=1,\quad(\tau_{k}\tau_{k+1})^3=1,\quad(\tau_{k}\tau_{l})^2=-1\;
\text{ if }\;\mid k-l\mid >1.
\end{equation}
The algebra  $\frak{A}_{n}$ has a natural grading inherited from
${\Bbb C}[{\tilde
S}_n]$: $p(\tau_{i})=1$, $i=1,\dots, n-1$.
We can rewrite the defining relations in the form
\begin{equation}\label{relation1}
 \tau_{k}^2=1,\quad
 \tau_{k}\tau_{k+1}\tau_{k}=\tau_{k+1}\tau_{k}\tau_{k+1},\quad
 \tau_{k}\tau_{l}=-\tau_{l}\tau_{k}\;\text{ if }\;\mid k-l\mid >1.
\end{equation}
In fact, it is more convenient to describe the algebra
 $\frak A_{n}$ in a slightly different way. Let us define elements
$\tau_{ij}$ for $i<j$ by induction as follows:
$$
  \tau_{ii+1}=\tau_{i},\quad
  \tau_{ij}=-\tau_{ij-1}\tau_{j}\tau_{ij-1},\quad
  \tau_{ji}=-\tau_{ij},
$$
where $i=1,\dots,n-1$.  It is easy to check the following relations for
$i,j\in\{1,\dots,n\}$ and $i\ne j$:
$$
  \tau_{ij}=-\tau_{ji},\quad\tau_{ij}^2=1,\quad
  \tau_{ij}\tau_{kl}=-\tau_{kl}\tau_{ij}\;\text{ if }\;\{i,j\}\cap\{k,l\}=\emptyset,
$$
$$
\tau_{ij}\tau_{jk}\tau_{ij}=\tau_{jk}\tau_{ij}\tau_{jk}=-\tau_{ij},
$$
where  $i,j,k$ are pairwise distinct.
We will regard
$\frak A_{n}$ as a superalgebra, setting $p(\tau_{ij})=1$.
Note that we can obtain a similar description of the algebra
 $\frak A^{+}_{n}$. To this end, recall the following result from
\cite{VV}. The group $S_{n}^+$ is generated by elements
$x_{i}=(i,i+1, i+2)$, $i=1,\dots,n-2$, satisfying the relations
$$
x_{i}^3=1,\quad i=1,\dots, n-2,\:
$$
$$
(x_{i}x_{i+1})^2=1,\quad i=1,\dots, n-3,
$$
$$
x_{i}x_{j}=x_{j}x_{i},\quad |i-j|>2,\: i,j=i=1,\dots, n-2,
$$
$$
x_{i}x_{i+1}^{-1}x_{i+2}=x_{i+2}x_{i},\quad i=1,\dots, n-4.
$$
This easily implies the following proposition.

\begin{proposition} The algebra $\frak A^{+}_{n}$ is generated by elements
$y_{i}$, $i=1,\dots,n-2$, satisfying the relations
$$
y_{i}^3=1,\quad i=1,\dots, n-2,
$$
$$
(y_{i}y_{i+1})^2=-1,\quad i=1,\dots, n-3,
$$
$$
y_{i}y_{j}=y_{j}y_{i},\quad |i-j|>2,\: i,j=i=1,\dots, n-2,
$$
$$
y_{i}y_{i+1}^{-1}y_{i+2}=-y_{i+2}y_{i},\quad i=1,\dots, n-4.
$$
\end{proposition}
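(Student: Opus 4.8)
The plan is to realize the generators $y_i$ explicitly inside $\frak A_n$, verify the four families of relations by direct manipulation of the $\tau$-relations, prove that the $y_i$ generate the even part, and finally match dimensions to conclude that the relations are defining. Since the three-cycle $x_i=(i,i+1,i+2)$ equals the product $s_is_{i+1}$ of adjacent transpositions in $S_n$, and $\tau_i$ lifts $s_i$, the natural candidate is $y_i=\tau_i\tau_{i+1}$ for $i=1,\dots,n-2$. Each $y_i$ is a product of two odd elements, hence even, so $y_i\in\frak A_n^+$.

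First I would verify the relations, reading off the signs from the anticommutation relation $\tau_k\tau_l=-\tau_l\tau_k$ ($|k-l|>1$) in (\ref{relation1}). The relation $y_i^3=1$ is immediate from $(\tau_i\tau_{i+1})^3=1$ in (\ref{relation}). Using $\tau_{i+1}^2=1$ one gets $y_iy_{i+1}=\tau_i\tau_{i+2}$, and since $|i-(i+2)|=2>1$ the two factors anticommute, giving $(y_iy_{i+1})^2=\tau_i\tau_{i+2}\tau_i\tau_{i+2}=-\tau_i^2\tau_{i+2}^2=-1$; the sign $-1$ here is exactly the cocycle contribution. For $|i-j|>2$ the four factors of $y_iy_j$ pairwise anticommute in an even number of swaps, so $y_iy_j=y_jy_i$ (the $+1$ relation). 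For the last relation I would use $y_{i+1}^{-1}=\tau_{i+2}\tau_{i+1}$ together with the braid relation $\tau_{i+1}\tau_{i+2}\tau_{i+1}=\tau_{i+2}\tau_{i+1}\tau_{i+2}$ to obtain $y_iy_{i+1}^{-1}y_{i+2}=\tau_i\tau_{i+2}\tau_{i+1}\tau_{i+3}$, and then a short sequence of anticommutations (one of which, $\tau_i\tau_{i+2}=-\tau_{i+2}\tau_i$, contributes the sign) rearranges this into $-\tau_{i+2}\tau_{i+3}\tau_i\tau_{i+1}=-y_{i+2}y_i$.

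Next I would establish generation. The telescoping identity $y_iy_{i+1}\cdots y_{i+k-1}=\tau_i\tau_{i+k}$ (repeatedly collapsing $\tau_{j+1}^2=1$) shows that every product $\tau_a\tau_b$ with $a<b$ lies in the subalgebra $\langle y_1,\dots,y_{n-2}\rangle$; the opposite orders $\tau_b\tau_a$ follow from $\tau_b\tau_a=-\tau_a\tau_b$ when $|a-b|>1$ and from $\tau_{a+1}\tau_a=y_a^{-1}$ when $b=a+1$, while $\tau_a^2=1$. Since $\frak A_n^+$ is spanned by words of even length in the odd generators $\tau_i$, grouping any such word into consecutive pairs exhibits it as a product of elements of $\langle y\rangle$. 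Hence the $y_i$ generate $\frak A_n^+$.

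It remains to see that these relations are defining, and this is the step I expect to be the main obstacle. Let $B$ be the abstract algebra presented by the four relation families; the previous steps give a surjection $B\twoheadrightarrow\frak A_n^+$, so it suffices to prove $\dim B\le\dim\frak A_n^+=|S_n^+|=n!/2$. I would adjoin a central square root of unity $z$ and let $\tilde G$ be the group generated by $y_1,\dots,y_{n-2},z$ with $z$ central, $z^2=1$, $y_i^3=1$, $(y_iy_{i+1})^2=z$, $y_iy_j=y_jy_i$ for $|i-j|>2$, and $y_iy_{i+1}^{-1}y_{i+2}=zy_{i+2}y_i$; then $B$ is the quotient of $\mathbb C[\tilde G]$ by $(z+1)$. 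Setting $z=1$ turns these into exactly the presentation of $S_n^+$ recalled from \cite{VV}, so $\tilde G/\langle z\rangle\cong S_n^+$ and $|\tilde G|\le 2|S_n^+|$; since $(y_iy_{i+1})^2=-1\ne1$ in $\frak A_n^+$ we have $z\ne1$, whence $|\tilde G|=n!$ and $\dim B\le\tfrac12|\tilde G|=n!/2$. Equality then forces the surjection to be an isomorphism. The delicate point throughout is that completeness rests on \cite{VV} supplying a complete presentation of $S_n^+$ and on the central $\mathbb Z_2$ not collapsing, i.e., on identifying $\frak A_n^+$ with the twisted group algebra $\mathbb C_\alpha[S_n^+]$ whose cocycle $\alpha$ is precisely the collection of signs computed above.
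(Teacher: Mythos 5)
Your proposal is correct and takes essentially the same route as the paper: the paper's entire ``proof'' is to recall the presentation of $S_{n}^{+}$ from \cite{VV} and assert that it ``easily implies'' the proposition, and your argument --- lifting $x_{i}$ to $y_{i}=\tau_{i}\tau_{i+1}$, checking the sign-twisted relations, proving generation, and using the \cite{VV} presentation of $S_n^+$ together with a dimension count over the central $\mathbb{Z}_2$-extension to see that the relations are defining --- is exactly that derivation carried out in detail.
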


Now let us describe the supercenter (see Definition~\ref{super}) of the algebra $\frak A_{n}$.

\begin{thm} Let $\alpha \vdash n$ be a partition of $n$ such that all
parts of $\alpha$ are odd. The elements
$C_{n}^{\alpha}$ with such $\alpha$
form a basis of the supercenter of the algebra $\frak A_{n}$.
\end{thm}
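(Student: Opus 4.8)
\emph{The plan.} Since $\frak A_{n}$ is a quotient of the semisimple group algebra $\mathbb C[\tilde S_{n}]$, it is itself semisimple, so as noted after Definition~\ref{super} its supercenter is the even part of its ordinary center, $SZ(\frak A_{n})=Z(|\frak A_{n}|)_{0}$. Consequently an element lies in $SZ(\frak A_{n})$ exactly when it is even and commutes with every generator $\tau_{k}$; because $\tau_{k}^{2}=1$, this last condition is the invariance of the element under each conjugation $a\mapsto\tau_{k}a\tau_{k}$. I would work in the basis $\{\tau_{w}:w\in S_{n}\}$ of $\frak A_{n}$, where $\tau_{w}$ is the product of the $\tau_{ij}$ along a reduced word for $w$ (well defined up to sign), so that $\dim\frak A_{n}=n!$ and $C_{n}^{\alpha}$ is a suitably signed sum of those $\tau_{w}$ whose cycle type is $\alpha$.

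First I would identify the dimension of the target. By the corollary relating the number of simple components of a graded semisimple algebra to $\dim Z(|A|)_{0}$, we have that $\dim SZ(\frak A_{n})$ equals the number of simple graded components of $\frak A_{n}$. By Schur's classification of the projective representations these are indexed by the strict partitions of $n$ (one simple component for each strict partition, of type $M$ or $Q$), and by Euler's identity the number of strict partitions of $n$ equals the number of partitions of $n$ into odd parts. Since this is precisely the number of admissible $\alpha$, it will suffice to prove that the $C_{n}^{\alpha}$ are supercentral and linearly independent; alternatively, one can bypass Schur's count and obtain spanning directly by analyzing which coefficient patterns are forced on a general central element (see below).

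Linear independence is immediate, since distinct cycle types have disjoint supports in $\{\tau_{w}\}$. For evenness, a permutation of cycle type $\alpha$ is a product of $n-\ell(\alpha)=\sum_{i}(\alpha_{i}-1)$ generators $\tau_{ij}$, so $p(\tau_{w})$ is congruent mod $2$ to the number of even parts of $\alpha$; when all parts are odd this is $0$, so $C_{n}^{\alpha}$ is even. The heart of the argument is the conjugation computation: $a\mapsto\tau_{k}a\tau_{k}$ sends $\tau_{w}$ to $\pm\tau_{s_{k}ws_{k}}$, and since conjugation by $s_{k}$ preserves cycle type it permutes the support of $C_{n}^{\alpha}$. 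Using the super-relations $\tau_{ij}\tau_{kl}=-\tau_{kl}\tau_{ij}$ for disjoint index pairs and $\tau_{ij}\tau_{jk}\tau_{ij}=-\tau_{ik}$, I would push $\tau_{k}$ through a product representing a single cycle, record the accumulated signs, and show that for a cycle of \emph{odd} length these signs cancel; summed over the class this makes $C_{n}^{\alpha}$ invariant under every $\tau_{k}(\cdot)\tau_{k}$, hence supercentral.

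The main obstacle is precisely this sign bookkeeping. One must choose the signs in the definition of $C_{n}^{\alpha}$ so that conjugating any cycle by a neighbouring transposition produces no net sign, and verify that the odd-length condition is exactly what forces the factors of $-1$ to cancel; indeed the computation already fails for a single transposition ($\sum_{i<j}\tau_{ij}$ is \emph{not} central, because disjoint super-transpositions anticommute), which is the infinitesimal reason even parts must be excluded. The same analysis, applied to an arbitrary even central element $a=\sum_{w}c_{w}\tau_{w}$, shows that the invariance constraints force $c_{w}=0$ whenever the cycle type of $w$ has an even part and force the $c_{w}$ within each odd class to agree up to the prescribed signs; this yields spanning independently of the dimension count. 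Together with linear independence and either dimension argument, this establishes that the $C_{n}^{\alpha}$ with $\alpha$ into odd parts form a basis of $SZ(\frak A_{n})$.
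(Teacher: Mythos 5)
Your strategy is, in outline, the paper's own: identify $SZ(\frak A_{n})$ with the even part of the ordinary center, realize it as invariants of conjugation by the $\tau_{ij}$, exhibit the class sums over odd cycle types as those invariants, and finish with linear independence plus either a dimension count or a spanning argument. But there is a genuine gap exactly at the place you yourself flag as ``the heart of the argument'': the sign analysis is never carried out, and the signs in your definition of $C_{n}^{\alpha}$ are never actually specified --- you only promise that they can be ``chosen'' so that conjugation produces no net sign. In your coordinates (a basis $\{\tau_{w}\}$ built from reduced words) this is a genuinely awkward computation, and the missing idea is the device that makes every sign disappear. Namely, the parity-twisted conjugation $\sigma_{ij}(x)=(-1)^{p(x)}\tau_{ij}x\tau_{ij}$ is an algebra automorphism, and by the relations $\tau_{ij}\tau_{kl}\tau_{ij}=-\tau_{kl}$ (disjoint pairs) and $\tau_{ij}\tau_{jk}\tau_{ij}=-\tau_{ik}$ it acts on the generators by \emph{pure index permutation}, $\sigma_{ij}(\tau_{kl})=\tau_{\sigma(k)\sigma(l)}$ with $\sigma=(ij)$ and no sign whatsoever; on even elements it coincides with ordinary conjugation, and invariance under all the $\sigma_{ij}$ is exactly supercentrality. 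Hence, if one defines $C_{n}^{\alpha}$ as the paper does --- the \emph{unsigned} sum over all tuples of pairwise distinct indices of the products $\tau_{i_{1}i_{2}}\tau_{i_{2}i_{3}}\cdots\tau_{i_{p-1}i_{p}}$, one factor per part --- then supercentrality is immediate: the $S_{n}$-action merely permutes the summands. Nothing has to cancel along a cycle.

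The odd-parts hypothesis then enters in a different place than where you put it: not in the invariance computation, but in the nonvanishing of $C_{n}^{\alpha}$. The cyclic-rotation identity $\tau_{i_{1}i_{2}\dots i_{p}}=(-1)^{p-1}\tau_{i_{2}\dots i_{p}i_{1}}$ shows that if some part is even, the unsigned sum pairs each summand with its negative, so $C_{n}^{\alpha}=0$ identically (note that your test element $\sum_{i<j}\tau_{ij}$ is a signed object; the relevant unsigned sum $\sum_{i\ne j}\tau_{ij}$ is literally zero because $\tau_{ji}=-\tau_{ij}$), whereas for all parts odd the rotations are consistent, $C_{n}^{\alpha}\ne0$, and distinct $\alpha$ give elements with disjoint supports. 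The same identity is what closes your spanning sketch: for an even central $a=\sum_{w}c_{w}\tau_{w}$ and $w$ containing an even cycle $c$, applying $\sigma_{c}$ (which on even elements is conjugation by a lift of $c$) fixes the permutation $w$ but sends $\tau_{w}$ to $-\tau_{w}$, forcing $c_{w}=0$; on a class with all parts odd the action is transitive on summands, producing a multiple of $C_{n}^{\alpha}$. Finally, a caveat on your first dimension route: invoking Schur's classification to get $\dim SZ(\frak A_{n})=$ (number of strict partitions) is circular inside this paper, since the paper derives the count of irreducible projective representations \emph{from} this theorem (number of simple components $=\dim Z(|\frak A_{n}|)_{0}=\dim SZ(\frak A_{n})$, then Euler's identity); your alternative direct spanning argument is the one that matches the paper's logic and should be the one you execute.
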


\begin{proof}
Let $i_{1},\dots,i_{p}$ be a sequence of pairwise distinct elements from $\{1,2,\dots,n\}$. Set
$$
\tau_{i_{1}i_{2}\dots i_{p}}=\tau_{i_{1}i_{2}}\tau_{i_{2}i_{3}}\dots\tau_{i_{p-1}i_{p}}.
$$
Then it is easy to check that
$\tau_{i_{1}i_{2}\dots i_{p}}=-\tau_{i_{2}i_{3}\dots i_{p}i_{1}}$
for an even $p$, and
$\tau_{i_{1}i_{2}\dots i_{p}}=\tau_{i_{2}i_{3}\dots i_{p}i_{1}}$ for an odd $p$.

Let $\alpha=(\alpha_{1},\alpha_{2},\dots,\alpha_{s})$
be a partition of $m$, where $m\le n$ and
$\alpha_{1}\ge\alpha_{2}\ge\dots\ge\alpha_{s}\ge2$. Set
$$
C_{n}^{\alpha}=\sum\tau_{i^{(1)}_{1}\dots
i^{(1)}_{\alpha_{1}}}\tau_{i^{(2)}_{1}\dots
i^{(2)}_{\alpha_{2}}}\dots\tau_{i^{(s)}_{1}\dots
i^{(s)}_{\alpha_{s}}},
$$
where the sum is taken over all  pairwise distinct $i^{(r)}_{j}$ from
$\{1,2,\dots, n\}$.

Define the following action of the symmetric group
  $S_{n}$ on $\frak A_{n}$:
$$
\sigma_{ij}(\tau_{kl})=-\tau_{ij}\tau_{kl}\tau_{ij},
$$
where $\sigma_{ij}\in S_{n}$ is a transposition. We see that in order to
describe the supercenter, we must describe invariant elements with
respect to this action.
The supercenter is the linear span of the elements
$C_{n}^{\alpha}$ with $\alpha$ an arbitrary partition of
 $n$. Let us show that if $\alpha_{j}$ is even for some $j$, then
$C_{n}^{\alpha}$ vanishes. Indeed,
$$
C_{n}^{\alpha}=\sum\tau_{i^{(1)}_{1}\dots
i^{(1)}_{\alpha_{1}}}\dots\tau_{i^{(j)}_{1}\dots
i^{(j)}_{\alpha_{j}}}\dots\tau_{i^{(s)}_{1}\dots
i^{(s)}_{\alpha_{s}}}
$$
$$
=\sum\tau_{i^{(1)}_{1}\dots
i^{(1)}_{\alpha_{1}}}\dots\tau_{i^{(j)}_{2}\dots
i^{(j)}_{\alpha_{j}}i^{(j)}_{1}}\dots\tau_{i^{(s)}_{1}\dots
i^{(s)}_{\alpha_{s}}}=-C_{n}^{\alpha}.
$$
Therefore  $C_{n}^{\alpha}=0$. If all parts of
$\alpha$ are odd, then $C_{n}^{\alpha}\ne0$. Note that for different
$\alpha^{(1)},\dots,\alpha^{(q)}$ the corresponding elements
$C_{n}^{\alpha^{(1)}},\dots,C_{n}^{\alpha^{(q)}}$
have no common summands and hence are linearly independent.
\end{proof}

Now let us describe the supercentralizer
$SZ(\frak A_{n},\frak A_{n-1})$. To this end, we introduce
the following analogs of the YJM-elements.

\begin{definition} \cite{Se}
The Young--Jucys--Murphy (YJM) element is the following element
of the algebra
$\frak A_{n}$:
$$
\pi_{n}=\tau_{1n}+\tau_{2n}+\dots+\tau_{n-1n}.
$$
\end{definition}

Note that $\pi_{1}=0$ by definition. Note also that analogs of the YJM-elements
for the algebra $\mathcal{C}_{n}\otimes\frak{A}_{n}$
were introduced by M.~Nazarov in \cite{N2} and denoted by $x_n$;
they coincide, up to an element of the Clifford algebra, with
$\pi_{n}$, namely,
$x_{n}=\frac{1}{\sqrt{2}}p_{n}\pi_{n}$.
It is known that the elements
$x_{i}$, $i=1,\dots, n$, pairwise commute. It follows that the elements
$\pi_{i}$, $i=1,\dots, n$, pairwise anti-commute. Below we will give
an independent proof of this fact.

\begin{thm}\label{SC} The supercentralizer
$SZ(\frak A_{n},\frak A_{n-1})$ is generated by the supercenter
$SZ(\frak A_{n-1})$ and the element $\pi_{n}$.
The Gelfand--Tsetlin superalgebra coincides with the algebra $\mathbb C
[\pi_{1},\pi_{2},\dots,\pi_{n}]$, and the algebra $SZ(\frak Y)$
generated by the supercenters
coincides with
$ \mathbb C [\pi_{1}^2,\pi_{2}^2,\dots,\pi_{n}^2]$.
\end{thm}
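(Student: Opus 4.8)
The plan is to prove the three assertions in the stated order. The supercentralizer statement is handled directly for each $n$ (an easy inclusion plus one genuinely hard inclusion), and then the two assertions about $SGZ(\frak Y)$ and $SZ(\frak Y)$ follow by induction on $n$. Throughout I use the $S_n$-action $\sigma_{ab}(\tau_{cd})=-\tau_{ab}\tau_{cd}\tau_{ab}=\tau_{\sigma_{ab}(c)\,\sigma_{ab}(d)}$ from the proof of the supercenter theorem, together with the relations for the $\tau_{ij}$.

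First I would settle the easy inclusion $\langle SZ(\frak A_{n-1}),\pi_n\rangle\subseteq SZ(\frak A_n,\frak A_{n-1})$. The inclusion $SZ(\frak A_{n-1})\subseteq SZ(\frak A_n,\frak A_{n-1})$ is immediate from the definitions. To check that $\pi_n$ supercommutes with $\frak A_{n-1}=\langle\tau_1,\ldots,\tau_{n-2}\rangle$, note that $\pi_n$ and each $\tau_k$ are odd, so it suffices to verify $\tau_k\pi_n\tau_k=-\pi_n$ for $1\le k\le n-2$. Writing $\tau_k=\tau_{k,k+1}$ and applying the action formula termwise gives $\tau_k\tau_{in}\tau_k=-\tau_{\sigma_k(i),n}$, where $\sigma_k=\sigma_{k,k+1}$ fixes $n$ and permutes $\{1,\ldots,n-1\}$; summing over $i<n$ and reindexing yields $\tau_k\pi_n\tau_k=-\sum_{i<n}\tau_{\sigma_k(i),n}=-\pi_n$, so $\pi_n\in SZ(\frak A_n,\frak A_{n-1})$. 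The anticommutation of the YJM elements comes out for free: for $i<j$ the odd element $\pi_i$ lies in $\frak A_{j-1}$, while $\pi_j$ supercommutes with $\frak A_{j-1}$, hence $\pi_i\pi_j=-\pi_j\pi_i$; in particular each $\pi_i^2$ is even and commutes with every $\pi_j$.

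The crux is the reverse inclusion $SZ(\frak A_n,\frak A_{n-1})\subseteq\langle SZ(\frak A_{n-1}),\pi_n\rangle$. Set $E=SZ(\frak A_n,\frak A_{n-1})$ and $D=\langle SZ(\frak A_{n-1}),\pi_n\rangle\subseteq E$; I would compare the two block by block. By Lemma~\ref{dct} and the supercentralizer analysis culminating in Lemma~\ref{chain}, the algebra $E$ is semisimple and its simple blocks are indexed by the edges $U\nearrow V$ of the branching graph of $\frak A_{n-1}\subset\frak A_n$, each block being a Clifford algebra whose type ($M$ or $Q$) is dictated by the types of $U$ and $V$. Inside $E$ the even commutative subalgebra generated by $SZ(\frak A_{n-1})$ and $SZ(\frak A_n)$ separates these blocks, since the central characters of $SZ(\frak A_{n-1})$ and of $SZ(\frak A_n)$ detect the source $U$ and the target $V$ of an edge. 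As $\pi_n$ is odd with $\pi_n^2$ central, on each block it is either zero or the odd Clifford generator; hence once one knows that $\pi_n$ acts nontrivially on the $Q$-type blocks and that $\pi_n^2$ takes the distinct scalar values separating the two modules $U,P(U)$ fused along an edge, it follows that $D$ surjects onto every block and separates blocks, so $D=E$. This spectral input is exactly where the real work lies, and is the only place where circularity with the simplicity of the branching must be avoided. I would extract it from the explicit identity $\pi_n^2=(n-1)+\sum_{i\ne j}\tau_{ij}\tau_{in}$, obtained from $\tau_{in}\tau_{jn}=\tau_{ij}\tau_{in}$, which exhibits $\pi_n^2$ as a scalar plus an element in the span of the supercentral generators $C_n^{\alpha}$, so that its eigenvalues are governed by the supercenter rather than assumed from the branching.

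Granting the supercentralizer statement, the remaining two assertions follow by induction on $n$. By definition $SGZ(\frak Y)=\langle SZ(\frak A_i,\frak A_{i-1}):2\le i\le n\rangle$, which by the first part equals $\langle\pi_2,\ldots,\pi_n,\ SZ(\frak A_1),\ldots,SZ(\frak A_{n-1})\rangle$. For each $i<n$ the inductive hypothesis applied to the truncated chain $\frak Y^{(i)}=\langle\mathbb C=\frak A_1\subset\cdots\subset\frak A_i\rangle$ gives $SZ(\frak A_i)\subseteq SZ(\frak Y^{(i)})=\mathbb C[\pi_1^2,\ldots,\pi_i^2]\subseteq\mathbb C[\pi_1,\ldots,\pi_n]$, whence $SGZ(\frak Y)=\mathbb C[\pi_1,\ldots,\pi_n]$ (recall $\pi_1=0$). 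For the last assertion, $SZ(\frak Y)$ was identified in the proof of Theorem~\ref{SC2} with the supercenter of $SGZ(\frak Y)$; and a direct computation in the explicit algebra $\mathbb C[\pi_1,\ldots,\pi_n]$ with anticommuting odd generators shows that an even element supercommutes with every $\pi_j$ precisely when each generator occurs to an even power, so that this supercenter is exactly $\mathbb C[\pi_1^2,\ldots,\pi_n^2]$. This establishes all three claims.
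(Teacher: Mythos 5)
Your first inclusion ($\langle SZ(\frak A_{n-1}),\pi_n\rangle\subseteq SZ(\frak A_n,\frak A_{n-1})$, with the anticommutation of the $\pi_i$ as a byproduct) is fine, but the crux of your argument --- the reverse inclusion --- has a genuine gap, and it sits exactly where you yourself locate ``the real work.'' Your block-by-block comparison needs three facts: that the blocks of $E=SZ(\frak A_n,\frak A_{n-1})$ are Clifford algebras indexed by the edges of the branching graph (this already presupposes that the branching of $\frak A_{n-1}\subset\frak A_n$ is multiplicity free --- with multiplicity $k>1$ a block is a matrix-type algebra of size $k$, not a Clifford algebra; note also that Lemma~\ref{chain} describes the irreducible modules of the algebra generated by $SZ(A,B)$ \emph{and} $B$, not the block structure of $SZ(A,B)$ itself); that $\pi_n$ acts as the odd Clifford generator on the $Q$-type blocks; and that $\pi_n^2$ takes distinct values on distinct blocks with the same source $U$. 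The identity $\pi_n^2=(n-1)+\sum\tau_{ij}\tau_{in}$, which you offer as the source of this ``spectral input,'' does not deliver it: it exhibits $\pi_n^2$ as a combination of supercentral elements of $\frak A_n$ and $\frak A_{n-1}$, hence shows only that $\pi_n^2$ is central in $E$ and acts by a scalar on each block. It says nothing about those scalars being distinct for different targets $V\ne V'$ containing $U$, nor about $\pi_n$ being nonzero on a $Q$-block. In the paper these spectral facts (the eigenvalues $\frac{1}{2}b_i(b_i+1)$, their distinctness, the simplicity of the branching) are deduced \emph{after} and \emph{from} Theorem~\ref{SC} together with Theorem~\ref{SC2}, so importing them here is circular, and you have given no independent proof of them.

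The paper's own proof avoids representation theory entirely at this point: $SZ(\frak A_n,\frak A_{n-1})$ is the space of invariants of the conjugation action of $S_{n-1}$ on $\frak A_n$, hence is spanned by the explicitly averaged elements $C_n^{\alpha,p}$ (all parts of $\alpha$ odd, $p\ge0$), and one proves by induction on $|\alpha|+p$ that each $C_n^{\alpha,p}$ lies in $\langle SZ(\frak A_{n-1}),\pi_n\rangle$: expanding the product $\pi_n C_n^{\alpha,p-1}$ gives $C_n^{\alpha,p}$ plus terms $C_n^{\beta,q}$ with $|\beta|+q$ strictly smaller. That combinatorial induction is the step your proposal is missing, and without it (or a genuinely independent spectral argument) the proof does not close. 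A secondary, lesser point: your final step identifies the supercenter of $\mathbb C[\pi_1,\dots,\pi_n]$ by a computation valid in a free superalgebra on anticommuting generators with central squares; inside $\frak A_n$ the monomials $\pi_{i_1}\cdots\pi_{i_k}$ may satisfy extra linear relations, which could a priori produce additional supercentral elements, so this too should be replaced by the paper's route: the inclusion $SZ(\frak A_i)\subseteq\mathbb C[SZ(\frak A_{i-1}),\pi_i^2]$ extracted from the first assertion, together with the explicit formula showing $\pi_i^2\in SZ(\frak Y)$.
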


\begin{proof} As in the previous theorem, we must average elements of
$\frak A_{n}$ with respect to the action of
$S_{n-1}$. Set
$$
C_{n}^{\alpha,p}=\sum\tau_{i_{1}\dots
i_{p}n}\tau_{i^{(1)}_{1}\dots
i^{(1)}_{\alpha_{1}}}\dots\tau_{i^{(j)}_{1}\dots
i^{(j)}_{\alpha_{j}}}\dots\tau_{i^{(s)}_{1}\dots
i^{(s)}_{\alpha_{s}}},
$$
where the sum is taken over all pairwise distinct
$i_{l},i^{j}_{r}$ from  $\{1,2,\dots,n-1\}$. It is not difficult to check that
$SZ(\frak A_{n},\frak A_{n-1})$ is the linear span of $C_{n}^{\alpha,p}$ with
$\alpha_{1}+\dots+\alpha_{s}+p\le n$. As in the proof of the previous theorem, we
see that all parts of $\alpha$ are odd and $p$ is arbitrary. Let us use
the induction on
 $\alpha_{1}+\dots+\alpha_{s}+p$. If $p=0$, then
 $C_{n}^{\alpha,0}\in SZ(\frak A_{n-1})$. Hence we may assume that
  $p>0$. Consider the product
$$
\pi_{n}C_{n}^{\alpha,p-1}=\sum\tau_{in}\tau_{i_{1}\dots
i_{p-1}n}\tau_{i^{(1)}_{1}\dots
i^{(1)}_{\alpha_{1}}}\dots\tau_{i^{(s)}_{1}\dots
i^{(s)}_{\alpha_{s}}}
$$
$$
=C_{n}^{\alpha,p}+\sum_{i=i_{1},\dots,i_{p-1}}\tau_{in}\tau_{i_{1}\dots
i_{p-1}n}\tau_{i^{(1)}_{1}\dots
i^{(1)}_{\alpha_{1}}}\dots\tau_{i^{(s)}_{1}\dots
i^{(s)}_{\alpha_{s}}}
$$
$$
+\sum_{j=1}^s\sum_{i=i^{(j)}_{1},\dots,i^{(j)}_{\alpha_{j}}}
\tau_{in}\tau_{i_{1}\dots i_{p-1}n}\tau_{i^{(1)}_{1}\dots
i^{(1)}_{\alpha_{1}}}\dots\tau_{i^{(s)}_{1}\dots
i^{(s)}_{\alpha_{s}}}.
$$
It is easy to check that
$$
\tau_{i_{q}n}\tau_{i_{1}\dots i_{p-1}n}= (-1)^p\tau_{i_{1}\dots
i_{q-1}n}\tau_{i_{q}\dots i_{p-1}n}
$$
and
$$
\tau_{i^{(j)}_{1}n}\tau_{i_{1}\dots
i_{p-1}n}\tau_{i^{(j)}_{1}\dots
i^{(j)}_{\alpha_{j}}}=\tau_{i_{1}\dots i_{p-1}i^{(j)}_{2}\dots
i^{(j)}_{\alpha_{j}}i^{(j)}_{1}n}.
$$
Hence all the summands except
$C_{n}^{\alpha,p}$ are linear combinations of the elements
$C_{n}^{\beta,q}$ with
 $|\beta|+q=|\alpha|+p-1<|\alpha|+p$.
This immediately implies the equality
$SGZ(\frak Y)=\mathbb{C}[\pi_{1},\pi_{2},\dots,\pi_{n}]$.
Hence it suffices to show that
$SZ(\frak A)$ coincides with
$ \mathbb C [\pi_{1}^2,\pi_{2}^2,\dots,\pi_{n}^2]$. We have
$$
\pi_{i}^{2}=i-1-\sum_{k, l< i, k\ne l}\tau_{kli}=\sum_{1\le i,j,k\le i-1}\tau_{ijk}-\sum_{1\le i,j,k\le i}\tau_{ijk},
$$
where the indices
$i,j,k$ in the last sums are pairwise distinct. Therefore $\pi_{i}^{2}\in SZ(\frak Y)$.
On the other hand, we have already proved  that
  $SZ(A_{i})\subset \mathbb{C}[SZ(A_{i-1}),\pi_{i}^2]$.
\end{proof}

In a similar way we can prove the following theorem.

\begin{thm}\label{SC1} The supercentralizer
$SZ(\frak A_{n},\frak A_{n-2})$ is generated by the supercenter
$SZ(\frak A_{n-2})$ and the elements $\pi_{n},\pi_{n-1},\tau_{n-1}$.
\end{thm}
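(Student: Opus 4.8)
The plan is to follow the scheme of the proof of Theorem~\ref{SC}, replacing the averaging over $S_{n-1}$ by averaging over the subgroup $S_{n-2}$ of permutations of $\{1,\dots,n-2\}$. First I would record that all three proposed generators already lie in the supercentralizer: $\pi_n\in SZ(\frak A_n,\frak A_{n-1})\subset SZ(\frak A_n,\frak A_{n-2})$ and $\pi_{n-1}\in SZ(\frak A_{n-1},\frak A_{n-2})\subset SZ(\frak A_n,\frak A_{n-2})$ by Theorem~\ref{SC} applied to the chains ending at $n$ and at $n-1$, while $\tau_{n-1}=\tau_{n-1\,n}$ is odd and anticommutes with each $\tau_{k\,k+1}$, $k\le n-3$, because the index sets are disjoint, so it supercommutes with $\frak A_{n-2}$. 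Hence the subalgebra generated by these elements together with $SZ(\frak A_{n-2})$ is contained in $SZ(\frak A_n,\frak A_{n-2})$, and only the reverse inclusion needs proof.

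Next I would exhibit a spanning set of $SZ(\frak A_n,\frak A_{n-2})$ by averaging products of path-elements over $S_{n-2}$, exactly as the $C_n^{\alpha,p}$ were obtained in Theorem~\ref{SC}. Now two symbols, $n-1$ and $n$, are distinguished rather than one, so the spanning elements fall into two families: those in which $n-1$ and $n$ terminate two separate open paths,
$$
\sum \tau_{i_1\dots i_p\,n}\,\tau_{j_1\dots j_q\,(n-1)}\,C_{n-2}^{\alpha},
$$
and those in which a single open path runs through both distinguished symbols, the shortest such path being $\tau_{n-1\,n}=\tau_{n-1}$ itself. As in Theorem~\ref{SC}, the summations run over pairwise distinct indices in $\{1,\dots,n-2\}$, the factor $C_{n-2}^{\alpha}$ is a product of odd cycles and hence lies in $SZ(\frak A_{n-2})$, and all parts of $\alpha$ must again be odd while the path lengths $p,q$ are arbitrary.

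The heart of the argument is an induction on the total size $|\alpha|+p+q$. Multiplying a spanning element of smaller size on the left by $\pi_n$ produces the corresponding element with $p$ increased by one as its leading term, every remaining summand being a spanning element of strictly smaller total size; this is the direct analog of the identity $\tau_{i_q n}\tau_{i_1\dots i_{p-1}n}=(-1)^p\tau_{i_1\dots i_{q-1}n}\tau_{i_q\dots i_{p-1}n}$ used before. Multiplication by $\pi_{n-1}$ does the same for the path ending at $n-1$, and multiplication by $\tau_{n-1}=\tau_{n-1\,n}$ bridges a path ending at $n-1$ to the symbol $n$, producing the elements of the second family. Applying these three operations repeatedly, starting from $SZ(\frak A_{n-2})$, generates every spanning element modulo terms of lower size, which are absorbed by the induction hypothesis, giving the reverse inclusion.

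I expect the main obstacle to be the bookkeeping caused by the presence of two distinguished symbols: one must verify that the collision summands arising in $\pi_n\cdot(\dots)$ and $\pi_{n-1}\cdot(\dots)$ — where the new transposition meets an index already occurring in one of the paths or in a cycle — genuinely reduce the total size, and, for the joint-path family, that the order in which $\pi_n$, $\pi_{n-1}$ and $\tau_{n-1}$ are applied can be rearranged using the anticommutation of the $\pi_i$ and the braid-type relations among the $\tau_{ij}$. Once these reductions are checked the induction closes and the theorem follows.
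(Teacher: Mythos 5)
Your proposal is correct and is essentially the paper's own argument: the paper proves this theorem by simply invoking the same scheme as Theorem~\ref{SC} (``in a similar way''), namely averaging over $S_{n-2}$ to get a spanning set of invariants and inducting on total size, which is exactly what you carry out, including the verification (omitted in the paper) that $\pi_n$, $\pi_{n-1}$, $\tau_{n-1}$ lie in $SZ(\frak A_{n},\frak A_{n-2})$. The bookkeeping issues you flag (collision terms dropping in size, bridging via $\tau_{n-1}$ to produce the single-path family) resolve just as in the proof of Theorem~\ref{SC}, so your outline closes.
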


\smallskip\noindent
{\bf Remark}. According to Theorem~\ref{SC2} and the previous corollary,
every irreducible $\frak A_{n}$-module is the direct sum of pairwise nonisomorphic
irreducible $SGZ(\frak Y)$-modules, each being of the form
$V(a_{1},\dots,a_{n})$, where $a_{1},\dots,a_{n}$ are eigenvalues of the
elements $\pi_{1}^2,\pi_{2}^2,\dots,\pi_{n}^2$.
\smallskip

\begin{corollary} Consider the chain of $\Bbb Z_{2}$-graded algebras
$$\mathbb{C}\subset \mathcal{C}_{1}\otimes\frak A_{1}\subset
\mathcal{C}_{2}\otimes\frak A_{2}\subset\dots\subset\mathcal{C}_{n}\otimes\frak A_{n}.
$$
Then the algebra generated by the supercentralizers of this chain
coincides with the algebra
$\mathcal{C}_{n}\otimes\mathbb C [\pi_{1},\pi_{2},\dots,\pi_{n}]$,
and the algebra $SZ(\frak Y)$ generated by the supercenters
coincides with  $ \mathbb C
[\pi_{1}^2,\pi_{2}^2,\dots,\pi_{n}^2]$.
\end{corollary}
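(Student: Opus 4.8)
The plan is to reduce the whole statement to Theorem~\ref{SC} by exploiting the way supercentralizers factor through a graded tensor product. The crucial ingredient is the identity
$$
SZ(A\otimes C,\,B\otimes D)=SZ(A,B)\otimes SZ(C,D)
$$
for graded semisimple algebras $B\subseteq A$, $D\subseteq C$, all tensor products being graded. First I would prove the inclusion $\supseteq$ by a direct computation with the Koszul signs: for homogeneous $a\in SZ(A,B)$, $c\in SZ(C,D)$ and $b\in B$, $d\in D$ one rewrites $(a\otimes c)(b\otimes d)$ using $ab=(-1)^{p(a)p(b)}ba$ and $cd=(-1)^{p(c)p(d)}dc$, and checks that the accumulated sign equals $(-1)^{(p(a)+p(c))(p(b)+p(d))}$, so that $a\otimes c$ supercommutes with $b\otimes d$. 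The reverse inclusion is the genuine content; I would obtain it from the double-supercentralizer theorem (Lemma~\ref{dct}) together with the semisimplicity of all algebras involved, passing to $\End$ of a faithful graded module and comparing dimensions. This is the step I expect to be the main obstacle, since it is precisely the super-analogue of the commutation theorem for tensor products of algebras and requires careful bookkeeping of signs.

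With this identity in hand the two tensor factors are handled separately. For the Clifford factor, write $p_1,\dots,p_n$ for the odd generators of $\mathcal C_n$, so that $p_i^2=1$ and $p_ip_j=-p_jp_i$ for $i\ne j$; a monomial $p_S=\prod_{i\in S}p_i$ supercommutes with $p_j$ automatically when $j\notin S$ and never when $j\in S$. Hence $SZ(\mathcal C_k,\mathcal C_{k-1})=\mathbb C\oplus\mathbb C p_k$ and $SZ(\mathcal C_k)=\mathbb C$, the latter reflecting the fact that $\mathcal C_k$ is simple as a superalgebra. Applying the tensor identity to the successive pair $\mathcal C_{k-1}\otimes\frak A_{k-1}\subset\mathcal C_k\otimes\frak A_k$ gives
$$
SZ(\mathcal C_k\otimes\frak A_k,\ \mathcal C_{k-1}\otimes\frak A_{k-1})=(\mathbb C\oplus\mathbb C p_k)\otimes SZ(\frak A_k,\frak A_{k-1}),
$$
and by Theorem~\ref{SC} the second factor is generated by $\pi_k$ and the supercenter $SZ(\frak A_{k-1})$.

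Finally I would assemble the two claims. The algebra generated by all successive supercentralizers of the Clifford chain then contains every $p_k$, hence all of $\mathcal C_n$ in the first tensor slot, and every $\pi_k$ together with the supercenters $SZ(\frak A_{k-1})$, hence, by Theorem~\ref{SC}, all of $\mathbb C[\pi_1,\dots,\pi_n]$ in the second slot; conversely each successive supercentralizer is visibly contained in $\mathcal C_n\otimes\mathbb C[\pi_1,\dots,\pi_n]$, since $\mathbb C\oplus\mathbb C p_k\subset\mathcal C_n$ and $SZ(\frak A_k,\frak A_{k-1})\subset\mathbb C[\pi_1,\dots,\pi_n]$ by Theorem~\ref{SC}. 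This proves the first assertion. For the second, the tensor identity with $B=A$ and $D=C$ yields $SZ(\mathcal C_k\otimes\frak A_k)=SZ(\mathcal C_k)\otimes SZ(\frak A_k)=\mathbb C\otimes SZ(\frak A_k)=SZ(\frak A_k)$, so the algebra generated by the supercenters of the Clifford chain equals the one generated by the $SZ(\frak A_k)$, which by Theorem~\ref{SC} is $\mathbb C[\pi_1^2,\dots,\pi_n^2]$.
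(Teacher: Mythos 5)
Your proof is correct; note that the paper states this result as a corollary with no proof at all, so what you have done is fill in an argument the authors leave implicit. Your route --- factor each successive supercentralizer through the graded tensor product via the identity $SZ(A\otimes C,\,B\otimes D)=SZ(A,B)\otimes SZ(C,D)$, compute the Clifford factor by hand, and quote Theorem~\ref{SC} for the $\frak A$-factor --- is exactly the reduction the placement of the corollary suggests, and the special case $SZ(A\otimes C,\,B\otimes 1)=SZ(A,B)\otimes C$ of your identity is already invoked by the authors themselves in the proof of the corollary to Lemma~\ref{chain}. One genuine simplification is available: the reverse inclusion, which you flag as the main obstacle and propose to extract from Lemma~\ref{dct} plus semisimplicity, needs neither; it is elementary linear algebra. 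Write a homogeneous $x\in A\otimes C$ as $x=\sum_i a_i\otimes c_i$ with the $c_i$ homogeneous and linearly independent and the $a_i$ homogeneous with $p(a_i)+p(c_i)=p(x)$; imposing supercommutation of $x$ with every $b\otimes 1$ and comparing coefficients of the $c_i$ gives $a_ib=(-1)^{p(a_i)p(b)}ba_i$ (the sign works out because $p(x)+p(c_i)\equiv p(a_i)$), so $SZ(A\otimes C,\,B\otimes 1)=SZ(A,B)\otimes C$, and symmetrically $SZ(A\otimes C,\,1\otimes D)=A\otimes SZ(C,D)$; since $B\otimes D$ is generated by $B\otimes 1$ and $1\otimes D$, intersecting the two subspaces, namely $\bigl(SZ(A,B)\otimes C\bigr)\cap\bigl(A\otimes SZ(C,D)\bigr)=SZ(A,B)\otimes SZ(C,D)$, yields the identity. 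With that in hand, your Clifford computation $SZ(\mathcal C_k,\mathcal C_{k-1})=\mathbb C\oplus\mathbb C p_k$ and $SZ(\mathcal C_k)=\mathbb C$ is right (sums of monomials cause no trouble, since the supercommutators with $p_j$ of distinct monomials containing $p_j$ are linearly independent), and both assertions of the corollary follow exactly as you assemble them, the only point worth making explicit being that the containment $SZ(\frak A_{k-1})\subseteq\mathbb C[\pi_1^2,\dots,\pi_{k-1}^2]$ used in the converse inclusion is Theorem~\ref{SC} applied to the chain of length $k-1$.
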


The following lemma describes some relations in the algebra
$SZ(\frak A_{n},\frak A_{n-2})$.
Consider the following elements of the algebra $\frak A_{n}$:
$$
F_{i}=\tau_{i}(\pi_{i}^2-\pi_{i+1}^2)+(\pi_{i+1}-\pi_{i}), \quad i=1,2,\dots, n-1.
$$

\begin{lemma} For $i=1,2,\dots,n-1$, the following relations hold:
$$
\tau_{i}\pi_{i}+\pi_{i+1}\tau_{i}=1,\quad(\pi_{i}-\pi_{i+1})\tau_{i}=\tau_{i}(\pi_{i}-\pi_{i+1}),
$$
$$
(\pi_{i}^2-\pi_{i+1}^2)\tau_{i}+\tau_{i}(\pi_{i}^2-\pi_{i+1}^2)=2(\pi_{i}-\pi_{i+1}),
$$
$$
F_{i}\pi_{i}+\pi_{i+1}F_{i}=0,\quad
F_{i}\pi_{i+1}+\pi_{i}F_{i}=0,\quad
F_{i}^2=\pi_{i}^2+\pi_{i+1}^2-(\pi_{i}^2-\pi_{i+1}^2)^2.
$$
\end{lemma}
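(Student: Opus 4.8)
The plan is to reduce everything to two ``exchange identities'' relating $\tau_i$ to the pair $\pi_i,\pi_{i+1}$, and then to obtain the remaining relations by purely formal manipulation. First I would record the consequence of the transposition-type relations among the $\tau_{ij}$ that, for $k<i$,
$$
\tau_i\tau_{ki}=-\tau_{k,i+1}\tau_i,\qquad \tau_i\tau_{k,i+1}=-\tau_{ki}\tau_i .
$$
Both follow from the braid-type relation $\tau_{ab}\tau_{bc}\tau_{ab}=-\tau_{ac}$ together with $\tau_i^2=1$ and $\tau_{ab}=-\tau_{ba}$. Summing the first identity over $k=1,\dots,i-1$ and using $\pi_{i+1}=\sum_{k<i}\tau_{k,i+1}+\tau_i$ gives $\tau_i\pi_i=-(\pi_{i+1}-\tau_i)\tau_i=1-\pi_{i+1}\tau_i$; summing the second identity likewise gives $\tau_i\pi_{i+1}=1-\pi_i\tau_i$. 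These are the key identities
$$
\tau_i\pi_i+\pi_{i+1}\tau_i=1,\qquad \tau_i\pi_{i+1}+\pi_i\tau_i=1,
$$
the first of which is the first assertion of the lemma.

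The second and third relations then follow by elementary algebra. Subtracting the two key identities yields $\tau_i(\pi_i-\pi_{i+1})=(\pi_i-\pi_{i+1})\tau_i$, which is the second relation. For the third, I would multiply the first key identity on the right by $\pi_i$ and substitute it again to get $\tau_i\pi_i^2=\pi_i-\pi_{i+1}+\pi_{i+1}^2\tau_i$, and symmetrically $\tau_i\pi_{i+1}^2=\pi_{i+1}-\pi_i+\pi_i^2\tau_i$. Subtracting these produces exactly $\tau_i(\pi_i^2-\pi_{i+1}^2)+(\pi_i^2-\pi_{i+1}^2)\tau_i=2(\pi_i-\pi_{i+1})$, i.e.\ the third assertion.

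For the last three relations I would write $F_i=\tau_i D+E$ with $D=\pi_i^2-\pi_{i+1}^2$ and $E=\pi_{i+1}-\pi_i$, and use two structural inputs. By Theorem~\ref{SC} the squares $\pi_j^2$ lie in $SZ(\frak Y)$, the supercenter of $SGZ(\frak Y)=\mathbb C[\pi_1,\dots,\pi_n]$; being even they are central, so in particular they commute with $\pi_i$ and $\pi_{i+1}$, whence $D$ commutes with $\pi_i$, $\pi_{i+1}$ and with $E$. Expanding $F_i\pi_i+\pi_{i+1}F_i$ with the help of $\tau_i\pi_i=1-\pi_{i+1}\tau_i$ and the centrality of $D$, the two terms containing $\tau_i D$ cancel and the remainder collapses to $D-\pi_i^2+\pi_{i+1}^2=0$; the computation of $F_i\pi_{i+1}+\pi_i F_i$ is the mirror image, using $\tau_i\pi_{i+1}=1-\pi_i\tau_i$. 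This gives the fourth and fifth relations.

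Finally, for $F_i^2=(\tau_i D+E)^2$ I would expand into $\tau_i D\tau_i D+(\tau_i DE+E\tau_i D)+E^2$. Here the second relation gives $[\tau_i,E]=0$ and the third gives the anticommutator $\tau_i D+D\tau_i=-2E$; using these, $\tau_i D\tau_i D$ simplifies to $4E^2+2ED\tau_i-D^2$ while the cross terms contribute $-4E^2-2ED\tau_i$, so that the $\tau_i$-dependent parts cancel and $F_i^2=E^2-D^2$. Since $\pi_i$ and $\pi_{i+1}$ anticommute (the anticommutativity of consecutive YJM elements recorded above), $E^2=\pi_i^2+\pi_{i+1}^2$, which yields $F_i^2=\pi_i^2+\pi_{i+1}^2-(\pi_i^2-\pi_{i+1}^2)^2$. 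The only genuinely delicate point is getting the signs right in the exchange identities at the start; once the two key identities are in hand the rest is bookkeeping, the single external input being the anticommutation $\pi_i\pi_{i+1}+\pi_{i+1}\pi_i=0$ needed to evaluate $E^2$.
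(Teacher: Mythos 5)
Your proof is correct, and its core is the same computation as the paper's: the paper proves the first identity by expanding $\tau_{i}\pi_{i}+\pi_{i+1}\tau_{i}$ into chains $\tau_{i+1\,i\,j}$ and $\tau_{j\,i+1\,i}$ and cancelling them via the cyclic-shift property of odd chains, which is term-for-term equivalent to your exchange identities $\tau_i\tau_{ki}=-\tau_{k,i+1}\tau_i$ and $\tau_i\tau_{k,i+1}=-\tau_{ki}\tau_i$ (note that the paper's displayed relation $\tau_{ij}\tau_{jk}\tau_{ij}=-\tau_{ij}$ is a misprint for $-\tau_{ik}$; you implicitly work with the corrected form). Where you genuinely add something is after that point: the paper dismisses the remaining five relations with ``the remaining relations easily follow from the first one,'' and your derivation makes visible that this is not literally true. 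The second and third relations do follow formally from the first identity together with its $\tau_i$-conjugate $\tau_i\pi_{i+1}+\pi_i\tau_i=1$ (which you derive independently, but which also follows at once by conjugating the first identity by $\tau_i$, since $\tau_i^2=1$). However, the relations involving $F_i$, and especially $F_i^2=\pi_i^2+\pi_{i+1}^2-(\pi_i^2-\pi_{i+1}^2)^2$, need two further inputs: that $\pi_i^2,\pi_{i+1}^2$ commute with $\pi_i,\pi_{i+1}$, and that $\pi_i\pi_{i+1}+\pi_{i+1}\pi_i=0$ (to evaluate $E^2$); neither is a formal consequence of the first identity alone. Both are legitimately available before the lemma: the anticommutativity of the $\pi_i$ is recorded in the paper just before Theorem~\ref{SC}, and it already implies the needed commutations $\pi_{i+1}^2\pi_i=\pi_i\pi_{i+1}^2$, so your appeal to the supercenter $SZ(\frak Y)$ via Theorem~\ref{SC} is correct but could be dropped in favour of that single input. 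Your bookkeeping with $D$ and $E$ checks out (the cancellations in $F_i\pi_i+\pi_{i+1}F_i$ and in $F_i^2=E^2-D^2$ are as you state), so in sum: the same route to the key identity, plus a correct and more careful completion of the part the paper leaves to the reader.
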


\begin{proof}We have
$$
\tau_{i}\pi_{i}+\pi_{i+1}\tau_{i}=\tau_{ii+1}\sum_{j<i}\tau_{ji}+
\left(\tau_{ii+1}+\sum_{j<i}\tau_{ji+1}\right)\tau_{ii+1}
$$
$$
=1+\sum_{j<i}\tau_{i+1ij}-\sum_{j<i}\tau_{ji+1i}=1,
$$
and the first relation is proved. The remaining relations easily
follow from the first one.
\end{proof}

The following lemma is needed for obtaining analogs of Young's formulas;
it is a simple exercise in representation theory. Consider the algebra $H$
generated by a semisimple commutative algebra $A$ and elements
$\tau, p,q $ that commute with $A$ and satisfy the relations
$\tau^{2}=1$, $pq+qp=0$, $\tau p+q\tau=1$. It is easy to check that the
element $\Delta=p^{2}+q^2-(p^2-q^2)^2$
belongs to the center of   $H$.

\begin{lemma}\label{IR}
Let $V$ be an irreducible module over $H$ that is semisimple as an
$A[p,q]$-module. Then

$1)$ If $\Delta=0$ in  $V$, then $V$ is irreducible as an $A[p,q]$-module and
$p^2, q^2$ act in $V$
by multiplications by $a,b$, where
$a\ne b$, $a+b=(a-b)^2$, and $\tau$ acts as the operator $
\frac{p-q}{a-b}$.

$2)$ If $\Delta\ne0$ in $V$, then $V$ is the direct sum of two irreducible
$A[p,q]$-modules, in one of which (say, $U$) the elements
$p^2, q^2$ act by multiplications by $a,b$, where $a\ne
b,$ $a+b\ne(a-b)^2$, and
$$
V=H\otimes_{A[p,q]}U.
$$
\end{lemma}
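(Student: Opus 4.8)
The plan is to reduce the whole analysis to the single element $F=\tau(p^{2}-q^{2})+(q-p)$, the abstract analog of the $F_{i}$ from the preceding lemma. By the same elementary manipulation of the relations $\tau^{2}=1$, $pq+qp=0$, $\tau p+q\tau=1$ (which also yields $\tau q+p\tau=1$), I would record the identities $F^{2}=\Delta$, $Fp=-qF$, $Fq=-pF$, and consequently $F(p^{2}-q^{2})=-(p^{2}-q^{2})F$, together with the reduction formula $\tau(p^{2}-q^{2})=F+(p-q)$. I would also note that $p^{2}+q^{2}$ is central (it commutes with $p,q$, and a one-line computation gives $\tau(p^{2}+q^{2})=(p^{2}+q^{2})\tau$), so that, $\Delta$ being central, $(p^{2}-q^{2})^{2}$ is central as well. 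Since $A$ is central and $V$ is $H$-irreducible, $A$ acts by scalars; likewise $s:=p^{2}+q^{2}$, $\rho:=(p^{2}-q^{2})^{2}$ and $\delta:=\Delta=s-\rho$ act by scalars on $V$, with $F^{2}=\delta$.

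Next I would restrict $V$ to $A[p,q]$. On any irreducible $A[p,q]$-submodule the central elements $p^{2},q^{2}$ act by scalars $a,b$ with $a+b=s$ and $(a-b)^{2}=\rho$; since $p^{2}-q^{2}$ acts as the scalar $a-b$ on each isotypic component, it acts semisimply on $V$. The first key point is that $a\neq b$: if $\rho=0$ then $p^{2}-q^{2}=0$ on $V$, whence $2(p-q)=\tau(p^{2}-q^{2})+(p^{2}-q^{2})\tau=0$, so $p=q$; then $2p^{2}=pq+qp=0$, and as $V$ is semisimple over $A[p,q]=A[p]$ this forces $p=q=0$, contradicting $\tau p+q\tau=1$. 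Hence $\rho\neq 0$ and $a\neq b$ on every component. Now fix an irreducible $A[p,q]$-submodule $U$ with $p^{2}=a$, $q^{2}=b$. Since $Fp^{2}=q^{2}F$, the element $F$ carries $U$ into the $(p^{2}=b)$-isotypic component, disjoint from $U$; denote its image by $FU$. Using $\tau(p^{2}-q^{2})=F+(p-q)$ and the fact that $p^{2}-q^{2}$ is the nonzero scalar $a-b$ on $U$ and $b-a$ on $FU$, I obtain $\tau u=\tfrac{1}{a-b}\bigl(Fu+(p-q)u\bigr)$ and $\tau(Fu)=\tfrac{1}{b-a}\bigl(\delta u+(p-q)Fu\bigr)$, so $U+FU$ is stable under $\tau$ and hence under $H$; by irreducibility $V=U+FU$.

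I would then split according to $\Delta$. If $\Delta=0$ in $V$, i.e. $\delta=0$, the formula for $\tau(Fu)$ shows $\tau(FU)\subseteq FU$, so $FU$ is an $H$-submodule; as $U\neq 0$ meets $FU$ only in $0$, irreducibility forces $FU=0$. Thus $V=U$ is irreducible over $A[p,q]$, the scalars satisfy $a\neq b$ and $a+b=s=\rho=(a-b)^{2}$, and $\tau u=\tfrac{1}{a-b}(p-q)u$; one verifies a posteriori that $\tau^{2}=1$ and $\tau p+q\tau=1$ reduce exactly to $s=\rho$ and $p^{2}-q^{2}=a-b$, confirming consistency. If $\Delta\neq 0$, then $F^{2}=\delta\neq 0$ makes $F$ invertible, so $U$ and $FU$ are two distinct irreducible $A[p,q]$-modules (distinguished by the $p^{2}$-eigenvalue) and $V=U\oplus FU$, with $a+b=s\neq\rho=(a-b)^{2}$. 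Finally, since $H=A[p,q]+A[p,q]\tau$, the canonical $H$-linear surjection $H\otimes_{A[p,q]}U\to V$ has source of dimension at most $2\dim U=\dim V$, hence is an isomorphism, giving $V=H\otimes_{A[p,q]}U$.

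The technical heart, and the step I expect to be the main obstacle, is the uniform argument that $a\neq b$ — that the degenerate situation $p^{2}=q^{2}$ cannot occur — which is precisely where semisimplicity of $V$ over $A[p,q]$ is indispensable; together with the observation that when $\Delta=0$ the space $FU$ suddenly becomes $H$-stable, so that $H$-irreducibility collapses $V$ onto a single $A[p,q]$-irreducible. Everything else is bookkeeping around the identity $\tau(p^{2}-q^{2})=F+(p-q)$ and the intertwining relations $Fp=-qF$, $Fq=-pF$, $F^{2}=\Delta$.
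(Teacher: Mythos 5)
Your proof is correct, and it follows exactly the route the paper intends: the paper gives no written proof of this lemma (it is dismissed as ``a simple exercise in representation theory''), but it introduces the elements $F_{i}$ and their relations immediately beforehand, and your argument is built on precisely the abstract analog $F=\tau(p^{2}-q^{2})+(q-p)$ with $F^{2}=\Delta$, $Fp=-qF$, $Fq=-pF$. Your two additions beyond that intended skeleton --- the use of semisimplicity over $A[p,q]$ to rule out the degenerate case $p^{2}=q^{2}$ (hence $a\ne b$), and the dimension count $H=A[p,q]+A[p,q]\tau$ giving $V=H\otimes_{A[p,q]}U$ --- correctly supply the details the paper omits.
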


\begin{corollary}\label{Young}
Let $V$ be an irreducible module over
$\mathcal C_{k}\otimes\frak A_{n}$ and  $ V(a_{1},\dots,a_{n})$
be the subspace of common eigenvectors for
 $\pi_{1}^2,\pi_{2}^2,\dots,\pi_{n}^2$ with eigenvalues
$a_{1},\dots,a_{n}$. Then

$1)$ $a_{i}\ne a_{i+1}$ for $i=1,\dots,n-1$.

$2)$ If $a_{i}+a_{i+1}=(a_{i}-a_{i+1})^2$, then
$ \tau_{i}$ acts in the subspace
$V(a_{1},\dots,a_{n})$ as the operator $\frac{\pi_{i}-\pi_{i+1}}{a_{i}-a_{i+1}}$.

 $3)$  If $a_{i}+a_{i+1}\ne(a_{i}-a_{i+1})^2 $, then
 $$ V(a_{1},\dots,a_{i+1},a_{i},\dots,a_{n})\ne0.$$
\end{corollary}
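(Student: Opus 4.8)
The plan is to deduce all three parts from the abstract Lemma~\ref{IR}, applied to the concrete data $p=\pi_i$, $q=\pi_{i+1}$, $\tau=\tau_i$, with $A$ the commutative algebra generated by the squares $\pi_j^2$ for $j\neq i,i+1$ (and, if one wishes, the even part of $\mathcal C_k$). The three defining relations of the abstract algebra $H$ hold verbatim: $\tau^2=1$ is \eqref{relation}, the identity $pq+qp=0$ is the pairwise anticommutation of distinct YJM elements noted after the definition of $\pi_n$, and $\tau p+q\tau=1$ is the relation $\tau_i\pi_i+\pi_{i+1}\tau_i=1$ proved in the lemma above. The central element $\Delta=p^2+q^2-(p^2-q^2)^2$ of that lemma is realized here by $F_i$, since $F_i^2$ equals exactly this expression.

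The one step that is not pure bookkeeping is verifying that $A$ commutes with $\tau_i,\pi_i,\pi_{i+1}$, which is what legitimizes this choice of $A$. Commutation with $\pi_i,\pi_{i+1}$ is clear, since $\pi_j^2$ commutes with any $\pi$ (distinct $\pi$'s anticommute, so their squares commute). Commutation of $\tau_i$ with $\pi_j^2$ splits into two cases. For $j<i$ it is immediate: $\tau_i$ anticommutes with every $\tau_{kj}$ in $\pi_j$ because the index sets are disjoint, so $\tau_i\pi_j=-\pi_j\tau_i$ and hence $\tau_i\pi_j^2=\pi_j^2\tau_i$. For $j>i+1$ the check is the genuine obstacle: here $\pi_j$ also contains the summands $\tau_{ij}$ and $\tau_{i+1,j}$, which share an index with $\{i,i+1\}$. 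Using the $S_n$-action $\sigma_{ij}(\tau_{kl})=-\tau_{ij}\tau_{kl}\tau_{ij}$ one finds $\tau_i\tau_{ij}\tau_i=-\tau_{i+1,j}$ and $\tau_i\tau_{i+1,j}\tau_i=-\tau_{ij}$, so these two summands are interchanged up to sign, and combining with the anticommutation of the remaining summands gives again $\tau_i\pi_j=-\pi_j\tau_i$, hence $\tau_i\pi_j^2=\pi_j^2\tau_i$. Thus $A$ is a central subalgebra of $H$, and $V$ is semisimple over $A[p,q]$ because the $\pi_j^2$ act diagonalizably (they lie in the commutative semisimple algebra $SZ(\frak Y)$) and $\pi_i,\pi_{i+1}$ act semisimply on each of their joint eigenspaces; we may therefore restrict $V$ to $H$, decompose into irreducible $H$-submodules, and apply Lemma~\ref{IR} to each.

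On an irreducible $H$-submodule $W$ the central algebra $A$ acts by a character, fixing $\pi_j^2=a_j$ for $j\neq i,i+1$, while $p^2=\pi_i^2$ and $q^2=\pi_{i+1}^2$ (being central in $A[p,q]$) act by scalars on each $A[p,q]$-constituent; so $V(a_1,\dots,a_n)$ is assembled from those $W$ with matching scalars $a_i,a_{i+1}$. Part (1) now follows because in both cases of Lemma~\ref{IR} the $p^2$- and $q^2$-eigenvalues on any irreducible $A[p,q]$-constituent are distinct (in the second case the complementary constituent $F_iU$ carries the swapped, still distinct, pair), whence $a_i\neq a_{i+1}$. For part (2), the hypothesis $a_i+a_{i+1}=(a_i-a_{i+1})^2$ says precisely that $\Delta=0$ on the relevant $W$, placing us in case 1 of Lemma~\ref{IR}, so $\tau_i$ acts as $\frac{p-q}{a-b}=\frac{\pi_i-\pi_{i+1}}{a_i-a_{i+1}}$ there and hence on all of $V(a_1,\dots,a_n)$.

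For part (3) I would bypass the induced-module description of case 2 and argue directly with $F_i$. From $F_i\pi_i=-\pi_{i+1}F_i$ and $F_i\pi_{i+1}=-\pi_iF_i$ one gets $F_i\pi_i^2=\pi_{i+1}^2F_i$ and $F_i\pi_{i+1}^2=\pi_i^2F_i$, so $F_i$ carries $V(a_1,\dots,a_i,a_{i+1},\dots,a_n)$ into $V(a_1,\dots,a_{i+1},a_i,\dots,a_n)$. Since $F_i^2=\Delta$ acts on $V(a_1,\dots,a_n)$ by the scalar $a_i+a_{i+1}-(a_i-a_{i+1})^2$, which is nonzero by hypothesis, $F_i$ is injective on this eigenspace, so its image is nonzero and the swapped eigenspace is nontrivial. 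As noted, the only real work is the distant-index commutation check that justifies reducing everything to Lemma~\ref{IR}; the three conclusions are then transcriptions of its two cases.
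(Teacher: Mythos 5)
Your proposal follows the same route as the paper for parts 1) and 2): both hinge on Lemma~\ref{IR} applied with $p=\pi_{i}$, $q=\pi_{i+1}$, $\tau=\tau_{i}$, and your verification of the relations of $H$ is correct, including the one genuinely nontrivial commutation check $\tau_{i}\pi_{j}=-\pi_{j}\tau_{i}$ for $j>i+1$ (the summands $\tau_{ij}$ and $\tau_{i+1,j}$ are indeed interchanged up to sign). Where you genuinely depart from the paper is part 3): the paper invokes case $2)$ of Lemma~\ref{IR} (the induced-module description $V=H\otimes_{A[p,q]}U$), whereas you argue directly that $F_{i}$ maps $V(a_{1},\dots,a_{i},a_{i+1},\dots,a_{n})$ into $V(a_{1},\dots,a_{i+1},a_{i},\dots,a_{n})$ and is injective there because $F_{i}^{2}=\Delta$ acts by the nonzero scalar $a_{i}+a_{i+1}-(a_{i}-a_{i+1})^{2}$. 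That is a real simplification: it uses only the relations of the lemma on $F_{i}$ and requires no semisimplicity whatsoever.

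The gap is in the hypothesis of Lemma~\ref{IR} that you must supply for parts 1) and 2): semisimplicity of the module over $A[p,q]$. You justify it by saying that the $\pi_{j}^{2}$ act diagonalizably and that ``$\pi_{i},\pi_{i+1}$ act semisimply on each of their joint eigenspaces.'' The second clause is exactly what needs proof, and it does not follow from the first: on a joint eigenspace with $a_{i}=0$ the operator $\pi_{i}$ satisfies only $\pi_{i}^{2}=0$, so diagonalizability of $\pi_{i}^{2}$ leaves open the possibility that $\pi_{i}$ acts as a nonzero nilpotent; then the image of $A[p,q]$ is not semisimple and Lemma~\ref{IR} does not apply. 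This degenerate case is not marginal: $a_{1}=0$ always (since $\pi_{1}=0$), so already part 2) with $i=1$ and eigenvalues $(a_{1},a_{2})=(0,1)$ falls into it, and the seminormal-form theorems later in the section single out $a_{i}a_{i+1}=0$ as a separate case for precisely this reason. This is what the paper's seemingly bureaucratic first step buys: restricting $V$ to the supercentralizer $SZ(\frak A_{i+1},\frak A_{i-1})$, which is a semisimple algebra (Theorem~\ref{SC1} identifies its generators, and the semisimplicity of supercentralizers of semisimple subalgebras is part of the Section~5 machinery), makes the restriction automatically a semisimple module, so the hypothesis of Lemma~\ref{IR} is structural rather than computational. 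Your argument can be repaired with the same kind of input: for instance, $\pi_{i}$ lies in $SGZ(\frak Y)=\mathbb C[\pi_{1},\dots,\pi_{n}]$, which is semisimple by the corollary to Lemma~\ref{chain} together with Theorem~\ref{SC}; on any simple component of this algebra where $\pi_{i}^{2}=0$, the odd element $\pi_{i}$ supercommutes with all generators, hence lies in the supercenter, which for a semisimple graded algebra is the even part of the center, forcing $\pi_{i}=0$ there. But some such structural fact is indispensable; contrary to your closing remark, the ``only real work'' is not the distant-index commutation but this semisimplicity. (A further small slip: the even part of $\mathcal C_{k}$ is noncommutative for $k\ge3$, so it cannot be adjoined to the commutative algebra $A$.)
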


\begin{proof}
Consider $V$ as a module over the subalgebra
$SZ(\frak A_{i+1}, \frak A_{i-1})$, which is semisimple by Lemma~\ref{SC1}.
Consider the corresponding action of the algebra
$H(p,q,\tau)$, which is also semisimple. Decompose the module $V$ into
isotypic components with respect to $H(p,q,\tau)$; then the subspace
$V(a_{1},\dots,a_{n})$ is contained in the
isotypic component in which
the element $\Delta=p^{2}+q^2-(p^2-q^2)^2$ acts as $0$. Therefore,
by Lemma~\ref{IR}, $a_{i}\ne
a_{i+1}$ and $\tau_{i}$ acts in $V(a_{1},\dots,a_{n})$ as the operator
$\frac{\pi_{i}-\pi_{i+1}}{a_{i}-a_{i+1}}$. This proves $2)$. Let us prove
$3)$. Similarly to the above, we consider the
isotypic component with a fixed
value $\Delta\ne0$. By Lemma~\ref{IR}, $ V(a_{1},\dots,a_{i+1},a_{i},\dots,a_{n})\ne0 $.
\end{proof}

\def\SSpec{{\rm SSpec}}
\def\Spec{{\rm Spec}}

Denote by $\SSpec(n)$  the set of all possible sequences of eigenvalues of the
elements $\pi_{1}^2,\pi_{2}^2,\dots,\pi_{n}^2$, and by
$\Spec(n)$  the set of all possible sequences of
eigenvalues of the ordinary YJM-elements.
Also denote by $\Spec^+(n)$ the subset in
$\Spec(n)$ consisting of $(c_{1},c_{2},\dots,c_{n})$ such that
$c_{i}\ge0$ for $i=1,\dots,n$.

 \begin{thm} Let  $(a_{1},a_{2},\dots,a_{n})\in \SSpec(n)$.
Then

{\rm (i)}   $a_{i}\ge 0$ for every $i=1,\dots,n$, and there exists a unique
nonnegative integer
$b_{i}$ such that  $a_{i}=\frac{1}{2}b_{i}(b_{i}+1)$;

{\rm (ii)} the map
$$
(a_{1},a_{2},\dots,a_{n})\longrightarrow (b_{1},b_{2},\dots,b_{n})
$$
is a bijection of  $\SSpec(n)$ onto  $\Spec^+(n)$.
\end{thm}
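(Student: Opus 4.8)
The plan is to reduce the characterization of $\SSpec(n)$ to the Okounkov--Vershik description of $\Spec(n)$ via the substitution $a=\tfrac12 b(b+1)$, exploiting that the local relations governing the spin spectrum (Corollary~\ref{Young}) become formally identical to those governing the ordinary spectrum once this substitution is made. The first thing I would record is the elementary but decisive identity: for nonnegative integers $s\ne t$, writing $a=\tfrac12 s(s+1)$ and $a'=\tfrac12 t(t+1)$, one has $a+a'=(a-a')^2$ if and only if $|s-t|=1$. Indeed, setting $d=s-t$ and $m=s+t+1$, a direct computation reduces $a+a'=(a-a')^2$ to $(1-d^2)(m^2-1)=0$, and since $s+t\ge 1$ forces $m\ge 2$, this gives $d=\pm1$. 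Thus the degenerate alternative of Corollary~\ref{Young} (the case $a_i+a_{i+1}=(a_i-a_{i+1})^2$, in which $\tau_i$ acts by $\frac{\pi_i-\pi_{i+1}}{a_i-a_{i+1}}$ and the pair cannot be transposed) corresponds exactly to consecutive $b$-values, while the nondegenerate alternative (in which $V(\dots,a_{i+1},a_i,\dots)\ne 0$) corresponds to $b$-values differing by more than $1$; these are precisely the two cases of the Okounkov--Vershik analysis for $S_n$.

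For part (i), I would first prove $a_i\ge 0$. Since $\frak A_n$ is a quotient of the group algebra $\mathbb{C}[\tilde S_n]$, its finite-dimensional modules are unitarizable for the anti-involution determined by $\tau_k^*=\tau_k$ (the $\tau_k$ being images of the involutions $\tilde\sigma_k$); one checks by induction on the defining formula for $\tau_{ij}$ that $\tau_{ij}^*=\tau_{ij}$, so that this anti-involution is well defined on the relations \eqref{relation1}. In a unitary model each $\pi_i=\sum_{j<i}\tau_{ji}$ is self-adjoint, whence $\pi_i^2\ge 0$ and every eigenvalue $a_i$ is nonnegative. Given $a_i\ge 0$ there is a unique real $b_i\ge 0$ with $a_i=\tfrac12 b_i(b_i+1)$, and the remaining content of (i) is that $b_i\in\mathbb{Z}$. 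This integrality I would obtain by induction along the chain: the base case is $\pi_1=0$, giving $a_1=0$ and $b_1=0$. For the inductive step one uses Corollary~\ref{Young} together with the intertwiner $F_i$ (satisfying $F_i\pi_i^2=\pi_{i+1}^2F_i$ and $F_i^2=\pi_i^2+\pi_{i+1}^2-(\pi_i^2-\pi_{i+1}^2)^2$) to propagate eigenvalues: reordering by admissible $F$-swaps preserves the multiset $\{a_1,\dots,a_q\}$, so each $a_q$ can be brought adjacent to some earlier $a_p$, where the degenerate relation and the identity above force $b_q=b_p\pm1\in\mathbb{Z}$.

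For part (ii), injectivity is immediate, since $b\mapsto\tfrac12 b(b+1)$ is strictly increasing on $\mathbb{Z}_{\ge0}$, so $(a_i)\mapsto(b_i)$ is a bijection onto its image. To identify this image with $\Spec^+(n)$ I would compare recursive descriptions. By Theorem~\ref{SC2} and the corollary on the Gelfand--Tsetlin superalgebra, a sequence lies in $\SSpec(n)$ precisely when it labels an admissible path, the admissibility rules being those extracted from Corollary~\ref{Young}; under $a=\tfrac12 b(b+1)$ these rules become verbatim the Okounkov--Vershik rules of \cite{OV} characterizing $\Spec(n)$, while the extra input $b_i\ge 0$ from part (i) cuts out exactly $\Spec^+(n)$. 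Hence the map is onto $\Spec^+(n)$. I expect the main obstacle to be the integrality step, specifically the spin analog of the Okounkov--Vershik neighbor-existence lemma: ruling out an eigenvalue $a_q$ that is non-adjacent (in the $b$-sense) to every earlier one. This is where one must exploit the explicit form $\pi_q^2=(q-1)-\sum_{k,l<q,\,k\ne l}\tau_{klq}$ and its action on the Gelfand--Tsetlin vectors, rather than the purely local rank-one analysis of Lemma~\ref{IR}.
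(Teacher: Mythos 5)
Your overall strategy --- reducing to the Okounkov--Vershik description of $\mathrm{Spec}(n)$ through the substitution $a=\tfrac12 b(b+1)$ --- is exactly the paper's, and two of your ingredients are sound: the identity showing that for triangular numbers $a+a'=(a-a')^2$ holds precisely when $|s-t|=1$ is the same computation underlying the paper's quadratic $a_{i+1}^2-(2a_i+1)a_{i+1}+a_i^2-a_i=0$, and your unitarity argument for $a_i\ge 0$ (self-adjointness of the $\tau_{ij}$ in a unitarizable module of the finite group $\tilde S_n$) is a legitimate shortcut the paper does not need, since there nonnegativity falls out of the induction itself. But your integrality step is left open, and the obstacle you flag at the end is not where the difficulty lies. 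No ``neighbor-existence lemma'' and no appeal to the expansion of $\pi_q^2$ is required: the paper runs a strong induction on the position $i$, quantified over \emph{all} sequences in $\mathrm{SSpec}(n)$ simultaneously. In the degenerate case $a_i+a_{i+1}=(a_i-a_{i+1})^2$ the quadratic forces $b_{i+1}=b_i\pm 1$; in the nondegenerate case, part 3) of Corollary~\ref{Young} says the swapped sequence is again in $\mathrm{SSpec}(n)$, with $a_{i+1}$ now in position $i$, so the induction hypothesis applies to it directly. Termination is automatic because the values in positions $1$ and $2$ are forced: $\pi_1=0$ and $\pi_2^2=\tau_1^2=1$, so an eigenvalue can never ``swap past everything.'' Your sketch instead asserts that each $a_q$ can be brought adjacent to an earlier entry forming a \emph{degenerate} pair, which is false as stated (it may instead reach position $2$) and is precisely the assertion you concede you cannot prove.

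The more serious gaps are in part (ii). First, your claim that the rules extracted from Corollary~\ref{Young} ``become verbatim'' the Okounkov--Vershik rules omits rule 3), the exclusion of patterns $(b_i,b_{i+1},b_{i+2})=(d,d+1,d)$: nothing in Corollary~\ref{Young} forbids these, and the paper rules them out by a separate argument --- both adjacent pairs would then be degenerate, forcing $\tau_i=\frac{\pi_i-\pi_{i+1}}{a_i-a_{i+1}}$ and $\tau_{i+1}=\frac{\pi_{i+1}-\pi_{i+2}}{a_{i+1}-a_{i+2}}$ on the weight space, which contradicts the braid relation $\tau_i\tau_{i+1}\tau_i=\tau_{i+1}\tau_i\tau_{i+1}$. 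Second, and decisively, your surjectivity argument fails. Corollary~\ref{Young} yields only \emph{necessary} conditions on elements of $\mathrm{SSpec}(n)$, so what you actually obtain is that the map lands inside $\mathrm{Spec}^+(n)$, not that it is onto; the sentence ``a sequence lies in $\mathrm{SSpec}(n)$ precisely when it labels an admissible path'' assumes the sufficiency of these conditions, which is the whole point at issue. Note that conditions 1)--4) cannot pin down the set by themselves: any union of complete ``shape classes'' (content vectors of the standard tableaux of some fixed collection of strict shapes) satisfies all of them, since admissible transpositions never leave a shape class. The paper closes this gap globally: it shows that admissible transpositions connect every strict standard tableau to the row-reading tableau of its shape, so the image is a union of entire shape classes, and then counts --- the number of irreducible $\frak A_{n}$-modules equals the number of strict partitions of $n$ --- to conclude that the image is all of $\mathrm{Spec}^+(n)$. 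Nothing in your proposal plays the role of this connectivity-plus-counting step.
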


\begin{proof} Let us prove the first assertion by induction on $i$. The
uniqueness is obvious. Let us prove the existence. If
$i=1$, then $\pi_{1}=0$ and $a_{1}=0$, whence $b_{1}=0$. If
$i=2$, then $a_{2}=\pi_{2}^2=1$ and $b_{2}=1$.
Assume that $i>1$ and
$a_{i}=\frac{1}{2}b_{i}(b_{i}+1)$ where
$b_{i}$ is a nonnegative integer. Then two cases are possible:

(a) $a_{i}+a_{i+1}= (a_{i}-a_{i+1})^2$;

(b) $a_{i}+a_{i+1}\ne (a_{i}-a_{i+1})^2$.

In case (a)  we have the following equation on $a_{i+1}$:
$$
a_{i+1}^2-(2a_{i}+1)a_{i+1}+a_{i}^2-a_{i}=0.
$$
By the induction hypothesis, this equation has two roots,
$ a_{i+1}=\frac{1}{2}(b_{i}+1)(b_{i}+2)$ and
$a_{i+1}=\frac{1}{2}(b_{i}-1)b_{i}$, and
 $b_{i}\ge0$. Therefore  $a_{i+1}\ge0$. In case (b),
 $(a_{1},\dots ,a_{i+1},a_{i},\dots,a_{n})\in \SSpec(n)$,
and the assertion follows from the induction hypothesis.

Now let us prove the second claim of the theorem. Recall the following characterization of the set
$\Spec(n)$. According to \cite{OV}, $(c_{1},c_{2},\dots,c_{n})\in \Spec(n)$
if and only if the following conditions hold:

$1)$ $c_{i}\in \Bbb Z$,\; $c_{1}=0$;

$2)$ $c_{i}\ne c_{i+1}$;

$3)$ for every $i=1,\dots,n-2$, we have
$(c_{i},c_{i+1},c_{i+2})\ne(d,d+1,d)$ for any
 $d\in\Bbb Z$;

$4)$ if $c_{i+1}\ne c_{i}\pm1$, then
$(c_{1},\dots,c_{i+1},c_{i}\dots,c_{n})\in \Spec(n)$.

Let us prove that  $b_{i}$, for $i=1,\dots,n$, satisfy the same conditions.

Assertions $1)$, $2)$, and $4)$ have already been proved.
Let us prove assertion $3)$. Assume that there exists  $i$ such that
$(b_{i},b_{i+1},b_{i+2})= (b,b+1,b)$. Then
$$
(a_{i}-a_{i+1})^2=(-1-b)^2=a_{i}+a_{i+1},\quad
(a_{i+1}-a_{i})^2=(1+b)^2=a_{i+1}+a_{i+2}.
$$
Hence, according to Corollary~\ref{Young},
$$
\tau_{i}=\frac{\pi_{i}-\pi_{i+1}}{a_{i}-a_{i+1}},\quad
\tau_{i+1}=\frac{\pi_{i+1}-\pi_{i+2}}{a_{i+1}-a_{i+2}}.
$$
But it is not difficult to check that these relations contradict the relation
 $$
\tau_{i}\tau_{i+1}\tau_{i}=\tau_{i+1}\tau_{i}\tau_{i+1},
$$
and assertion $4)$ is proved. Hence the map from assertion
(ii) of the theorem is an injection. The remaining part of the proof
follows the same scheme as for the symmetric group. First we prove that
there exists a sequence of admissible\footnote{A transposition of indices
is called admissible if it does not cause a violation of conditions
1)--4), see \cite{OV}.} transpositions
that sends a given
strict standard tableau to the standard tableau of the same shape
obtained by placing the integers $1,2,{\ldots} ,n$ successively into
the cells of the first row, then the cells of the second row, etc.
This implies that if the weight of a standard tableau has the same shape
as a
weight of some irreducible module, then it is itself a weight
of this module. Computing the number of irreducible representations and
the number of strict partitions completes the proof.
\end{proof}

Let us proceed to the description of analogs of Young's formulas. In contrast to
the ordinary Young's formulas, we do not use the Gelfand--Tsetlin basis,
but write the corresponding formulas in terms of the Gelfand--Tsetlin superalgebra.
Note that here we obtain a complete description of the action of the algebra
$\mathcal C_{k}\otimes\frak
A_{n}$ in irreducible modules.

Fix a strict partition $\alpha$ and choose a
subspace
$V^{\alpha}_{T_{0}}$ in the irreducible module
$V^{\alpha}$ corresponding to the standard tableau obtained
by filling the diagram row-wise. Let
${T}$ be an arbitrary tableau of shape $\alpha$. Then there exists a unique
$s\in S_{n}$ such that $T=sT_{0}$.
Denote by $P_{T}$ the map $V^{\alpha}_{T_{0}}\rightarrow V^{\alpha}_{T}$
that is the composition of $s$ and the projection of $V^{\alpha}$ to
$V^{\alpha}_{T}$ parallel to $\oplus_{T^{\prime}\ne
T}V^{\alpha}_{T^{\prime}}$.

\begin{thm}[Young's seminormal form for the algebra
$\mathcal C_{k}\otimes\frak A_{n}$]
Let $\alpha$ be a strict partition and ${T}$ be an arbitrary tableau of shape
$\alpha$. Then the commutation between
$\tau_{i}\in \frak A_{n}$ and $P_T$
is given by the following formulas:

$(i)$
If $a_{i}+a_{i+1}=(a_{i}-a_{i+1})^2$, then
 $$
 \tau_{i}P_{T}=\frac{\pi_{i}-\pi_{i+1}}{a_{i}-a_{i+1}}P_{T}.
 $$

 $(ii)$ If $a_{i}+a_{i+1}\ne(a_{i}-a_{i+1})^2 $ and $l(s_{i}T)> l(T)$, then
  $$
 \tau_{i}P_{T}=\frac{\pi_{i}-\pi_{i+1}}{a_{i}-a_{i+1}}P_{T}+\frac{1}{\sqrt2}\left(p_{i}-p_{i+1}\right)P_{s_{i}T}.
 $$

$(iii)$  If $a_{i}+a_{i+1}\ne(a_{i}-a_{i+1})^2 $ and $l(s_{i}T)<l(T)$, then
$$
 \tau_{i}P_{T}=\frac{\pi_{i}-\pi_{i+1}}{a_{i}-a_{i+1}}P_{T}+\frac{1}{\sqrt2}\left(p_{i}-p_{i+1}\right)\left(1-\frac{a_{i}+a_{i+1}}{(a_{i}+a_{i+1})^2}\right)P_{s_{i}T}.
$$
\end{thm}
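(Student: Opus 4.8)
The plan is to reduce everything to the action of a single generator $\tau_i$ on the pair of weight spaces $V^{\alpha}_{T}$ and $V^{\alpha}_{s_iT}$. Throughout I identify $V^{\alpha}_{T}$ with the common eigenspace $V(a_1,\dots,a_n)$ of $\pi_1^2,\dots,\pi_n^2$ whose eigenvalue sequence is the one attached to the tableau $T$ by the preceding theorem on $\SSpec(n)$; the tableau $s_iT$ then corresponds to the weight obtained by transposing $a_i$ and $a_{i+1}$. The first step is to observe that, since $\pi_j^2$ commutes with every $\pi_i$ (the $\pi$'s pairwise anticommute), each $\pi_j$ preserves every joint eigenspace $V(a_1,\dots,a_n)$. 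Hence the operator $\frac{\pi_i-\pi_{i+1}}{a_i-a_{i+1}}$ maps $V^{\alpha}_{T}$ into itself and furnishes the diagonal part of $\tau_i$ common to all three formulas.

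Next I would extract the off-diagonal part from the element $F_i=\tau_i(\pi_i^2-\pi_{i+1}^2)+(\pi_{i+1}-\pi_i)$ of the lemma preceding Lemma~\ref{IR}. On $V^{\alpha}_{T}$ the scalar $\pi_i^2-\pi_{i+1}^2$ equals $a_i-a_{i+1}\ne0$ by Corollary~\ref{Young}(1), so rearranging the defining relation gives
$$
\tau_i=\frac{\pi_i-\pi_{i+1}}{a_i-a_{i+1}}+\frac{F_i}{a_i-a_{i+1}}
\qquad\text{on }V^{\alpha}_{T}.
$$
The relations $F_i\pi_i=-\pi_{i+1}F_i$ and $F_i\pi_{i+1}=-\pi_iF_i$ show that $F_i$ interchanges the $\pi_i^2$- and $\pi_{i+1}^2$-eigenvalues, hence carries $V^{\alpha}_{T}$ into $V^{\alpha}_{s_iT}$; this is the only off-diagonal contribution. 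In case~(i), where $a_i+a_{i+1}=(a_i-a_{i+1})^2$, the same lemma gives $F_i^2=\pi_i^2+\pi_{i+1}^2-(\pi_i^2-\pi_{i+1}^2)^2=0$ on $V^{\alpha}_{T}$; since the restriction to $SZ(\frak A_{i+1},\frak A_{i-1})$ is semisimple by Lemma~\ref{SC1} and $F_i$ is a nilpotent off-diagonal map, Lemma~\ref{IR}(1) forces $F_i=0$, which is precisely formula~(i).

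For cases~(ii) and~(iii) one has $a_i+a_{i+1}\ne(a_i-a_{i+1})^2$, so $F_i\colon V^{\alpha}_{T}\to V^{\alpha}_{s_iT}$ is an isomorphism whose square acts as the nonzero scalar $a_i+a_{i+1}-(a_i-a_{i+1})^2$. Because the branching is simple, each weight space is one-dimensional up to the parity functor, so the space of odd maps $V^{\alpha}_{T}\to V^{\alpha}_{s_iT}$ anticommuting with the even diagonal operators in the prescribed way is spanned by a single transition map; by the bridge $x_i=\frac{1}{\sqrt2}p_i\pi_i$ to Nazarov's commuting elements, this transition map is realized in $\mathcal C_k\otimes\frak A_n$ by the Clifford element $\frac{1}{\sqrt2}(p_i-p_{i+1})$, which satisfies $\bigl(\tfrac{1}{\sqrt2}(p_i-p_{i+1})\bigr)^2=1$. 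The off-diagonal part of $\tau_i$ is therefore a scalar multiple of $\frac{1}{\sqrt2}(p_i-p_{i+1})$ composed with the normalized maps $P_T,P_{s_iT}$.

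It remains to pin down that scalar, and this is where the asymmetry between (ii) and (iii) enters. Since $P_T$ is defined through the (non-unitary) $S_n$-action followed by a parallel projection onto $V^{\alpha}_{T}$, the transition from the shorter tableau to the longer one and the transition in the opposite direction are \emph{not} mutually inverse; their product is constrained by $\tau_i^2=1$. Combining this constraint with the diagonal computation $\bigl(\frac{\pi_i-\pi_{i+1}}{a_i-a_{i+1}}\bigr)^2=\frac{a_i+a_{i+1}}{(a_i-a_{i+1})^2}$ and with $\frac{F_i^2}{(a_i-a_{i+1})^2}=\frac{a_i+a_{i+1}}{(a_i-a_{i+1})^2}-1$ distributes the total normalization as the coefficient $1$ in the length-increasing case~(ii) and the extra factor appearing in~(iii). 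This last bookkeeping — tracking the projection normalization and extracting the directional factor from $\tau_i^2=1$ — is the main obstacle; everything else follows directly from the structural results of Corollary~\ref{Young}, Lemma~\ref{IR}, and the relations for $F_i$ already established.
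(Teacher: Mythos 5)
Your case~(i) is sound and is essentially the paper's own argument: the decomposition $\tau_i=\frac{\pi_i-\pi_{i+1}}{a_i-a_{i+1}}+\frac{F_i}{a_i-a_{i+1}}$ on $V^{\alpha}_{T}$, combined with Lemma~\ref{IR} applied through the semisimple algebra $SZ(\frak A_{i+1},\frak A_{i-1})$ of Theorem~\ref{SC1}, is exactly what Corollary~\ref{Young} records. The genuine gap is in cases~(ii) and~(iii), and it is precisely the step you yourself flag as ``the main obstacle'' and then do not carry out: you never establish how $P_{T}$ and $P_{s_iT}$ are normalized relative to one another. The constraint $\tau_i^2=1$ (equivalently $F_i^2=\Delta$) determines only the \emph{product} of the two transition coefficients --- the one for $V^{\alpha}_{T}\to V^{\alpha}_{s_iT}$ and the one for the reverse direction --- and any redistribution of that product between the two directions is equally consistent with it. It therefore cannot tell you that the length-increasing direction carries the coefficient $\frac{1}{\sqrt2}(p_i-p_{i+1})$ with no further scalar, while the length-decreasing direction absorbs the entire extra factor of~(iii). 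That asymmetry is not bookkeeping; it is the actual content of the theorem, and it depends on the concrete definition of $P_{T}$ as the $S_n$-action followed by the parallel projection. (Your appeal to $x_i=\frac{1}{\sqrt2}p_i\pi_i$ does not help here: Nazarov's elements motivate the shape of the answer but do not identify the transition operator, and your Schur-type argument that the space of odd transition maps is one-dimensional also glosses over the type $M$ versus type $Q$ distinction for the weight spaces.)

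The paper supplies exactly the missing ingredient by proving a closed formula for $P_{T}$: along a reduced admissible decomposition $s=s_{i_1}\dots s_{i_l}$ one has $P_{sT_0}=P_{s_{i_1}}\dots P_{s_{i_l}}$, where
$$
P_{s_i}=-\frac{p_i-p_{i+1}}{\sqrt 2}\left(\tau_i-\frac{\pi_i-\pi_{i+1}}{\pi_i^2-\pi_{i+1}^2}\right).
$$
The verification reduces to the case $l=1$: the relations $\pi_i^2P_{s_i}=P_{s_i}\pi_{i+1}^2$ and $\pi_{i+1}^2P_{s_i}=P_{s_i}\pi_{i}^2$ show that $P_{s_i}$ carries $V^{\alpha}_{T}$ into $V^{\alpha}_{s_iT}$, and for $v\in V^{\alpha}_{T}$ one checks that $P_{s_i}v$ is precisely the $V^{\alpha}_{s_iT}$-component of $s_iv$. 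This yields the key identity $P_{s_iT}=P_{s_i}P_{T}$ whenever $l(s_iT)>l(T)$; case~(ii) then follows by solving this identity for $\tau_iP_{T}$, using $\bigl(\frac{p_i-p_{i+1}}{\sqrt2}\bigr)^2=1$, and case~(iii) follows by applying~(ii) to the tableau $s_iT$ and multiplying through by $\tau_i$. To repair your proposal you would need to prove some relation of this kind between $P_{T}$ and $P_{s_iT}$; as written, the decisive normalization --- the very thing that distinguishes (ii) from (iii) --- is asserted rather than proved.
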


\begin{proof}
First let us write an explicit formula for the map $P_{T}$. Let $T=sT_{0}$,
and let $s=s_{i_{1}}\dots s_{i_{l}}$ be the
reduced decomposition of a permutation $s$,
all transpositions in this decomposition being admissible. Then
\begin{equation}\label{explicit}
P_{sT_{0}}=P_{s_{i_{1}}}P_{s_{i_{2}}}\dots P_{s_{i_{l}}},
\end{equation}
where
$$
P_{s_{i}}=-\frac{p_{i}-p_{i+1}}{\sqrt 2}\left(\tau_{i}-\frac{\pi_{i}-\pi_{i+1}}{\pi^2_{i}-\pi^2_{i+1}}\right).
$$
It suffices to consider the case $l=1$ and check that if
$v\in V^{\alpha}_{T}$ and $s_{i}$ is an admissible transposition, then
$s_{i}v-v^{\prime}=P_{s_{i}}v\in
V^{\alpha}_{s_{i}T}$, where $v^{\prime}\in V^{\alpha}_{T}$.
But it is easy to check that
$\pi_{i}^2P_{s_{i}}=P_{s_{i}}\pi_{i+1}^2$,
$\pi_{i}^2P_{s_{i+1}}=P_{s_{i}}\pi_{i}^2$. Hence
$P_{s_{i}}v\in V^{\alpha}_{s_{i}T}$. Further, assertion  $(i)$
follows from Lemma~\ref{IR}.
Assertion $(ii)$ follows from the relation $P_{s_{i}T}=P_{s_{i}}P_{T}$
if $l(s_{i}T)>
l(T)$. Assertion  $(iii)$ follows from $(ii)$. The theorem is proved.
\end{proof}

\begin{remark}{\rm It is easy to check the relations
$P_{sT_{0}}\pi_{i}=\pi_{s(i)}P_{sT_{0}}$,
$P_{sT_{0}}p_{i}=p_{s(i)}P_{sT_{0}}$. Together with the
relations from the previous theorem, they give a complete description of the
action of the algebra $\mathcal
C_{k}\otimes\frak A_{n}$ in irreducible modules and coincide (after simple
transformations) with the formulas obtained in  \cite{N2} by other methods.}
\end{remark}

Let us proceed to the description of analogs of Young's formulas for the algebra
$\frak A_{n}$. For this we need to define analogs of the maps
$P_{T}$. In contrast to the previous case, we cannot do this using the
projection to the corresponding eigenspace. Instead, we will write
an analog of formula~(\ref{explicit}), which gives an explicit decomposition of this operator.

Fix a strict partition $\alpha$, choose subspaces
$V^{\alpha}_{T}$, $V^{\alpha}_{s_{i}T}$, where $s_{i}$
is an admissible transposition, and set
 $$
 Q_{s_{i}}=\left(\tau_{i}-\frac{\pi_{i}-\pi_{i+1}}{\pi^2_{i}-\pi^2_{i+1}}\right)\left(\frac{\pi_{i}}{\sqrt{\pi^2_{i}}}-\frac{\pi_{i+1}}{\sqrt{\pi^2_{i+1}}}\right)
$$if $a_{i}a_{i+1}\ne0$, and
$$
Q_{s_{i}}=\left(\tau_{i}-\frac{\pi_{i}-\pi_{i+1}}{\pi^2_{i}-\pi^2_{i+1}}\right)\frac{\pi_{i}+\pi_{i+1}}{\sqrt{\pi^2_{i}}+\sqrt{\pi^2_{i+1}}}
$$if $a_{i}a_{i+1}=0$.

As above, let $V^{\alpha}_{T_{0}}$ be the subspace in the irreducible module
$V^{\alpha}$ corresponding to the standard tableau filled by rows.
Let ${T}$ be an arbitrary tableau of shape
$\alpha$. Then there exists a unique $s\in S_{n}$ such that
$T=sT_{0}$ and $s=s_{i_{1}}\dots s_{i_{l}}$
is the reduced decomposition of the permutation $s$,
and all transpositions in this decomposition are admissible. Set
$$
Q_{T}=Q_{s_{1}}Q_{s_{2}}\dots Q_{s_{l}}.
$$

\begin{thm}[Young's seminormal form for the algebra $\frak A_{n}$] Let $\alpha$
be a strict partition and ${T}$ be an arbitrary tableau of shape $\alpha$.
Then $Q_{T}$ does not depend on the choice of a reduced decomposition of $s$
into the product of admissible transpositions, and the commutation
between $\tau_{i}\in\frak A_{n}$ and $Q_T$
is given by the following formulas:

$(i)$
 If $a_{i}+a_{i+1}=(a_{i}-a_{i+1})^2$, then
 $$
 \tau_{i}Q_{T}=\frac{\pi_{i}-\pi_{i+1}}{a_{i}-a_{i+1}}Q_{T}.
 $$

 $(ii)$ If $a_{i}+a_{i+1}\ne(a_{i}-a_{i+1})^2$, $a_{i}a_{i+1}\ne0$, and $l(s_{i}T)> l(T)$, then
  $$
 \tau_{i}Q_{T}=\frac{\pi_{i}-\pi_{i+1}}{a_{i}-a_{i+1}}Q_{T}-\frac{1}{2}\left(\frac{\pi_{i}}{\sqrt{a_{i+1}}}-\frac{\pi_{i+1}}{\sqrt{a_{i}}}\right)Q_{s_{i}T}.
 $$

 If  $l(s_{i}T)<l(T)$, then
$$
 \tau_{i}Q_{T}=\frac{\pi_{i}-\pi_{i+1}}{a_{i}-a_{i+1}}Q_{T}-\frac{1}{2}\left(\frac{\pi_{i}}{\sqrt{a_{i+1}}}-\frac{\pi_{i+1}}{\sqrt{a_{i}}}\right)\left(1-\frac{a_{i}+a_{i+1}}{(a_{i}+a_{i+1})^2}\right)Q_{s_{i}T}.
$$

$(iii)$ If $a_{i}+a_{i+1}\ne(a_{i}-a_{i+1})^2$, $a_{i}a_{i+1}=0$, and $l(s_{i}T)> l(T)$, then
$$
\tau_{i}Q_{T}=\frac{\pi_{i}-\pi_{i+1}}{a_{i}-a_{i+1}}Q_{T}-\frac{\pi_{i}+\pi_{i+1}}{\sqrt{a_{i}}+\sqrt{a_{i+1}}}Q_{s_{i}T}.
$$
If  $l(s_{i}T)<l(T)$, then
$$
\tau_{i}Q_{T}=\frac{\pi_{i}-\pi_{i+1}}{a_{i}-a_{i+1}}Q_{T}+\frac{\pi_{i}+\pi_{i+1}}{\sqrt{a_{i}}+\sqrt{a_{i+1}}}\left(1-\frac{a_{i}+a_{i+1}}{(a_{i}+a_{i+1})^2}\right)Q_{s_{i}T}.
$$
\end{thm}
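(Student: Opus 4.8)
The plan is to follow the proof of the preceding theorem almost verbatim, the only change being that the Clifford factor $\tfrac{1}{\sqrt2}(p_i-p_{i+1})$, which is not available inside $\frak A_n$, is replaced by the operator $\tfrac{\pi_i}{\sqrt{\pi_i^2}}-\tfrac{\pi_{i+1}}{\sqrt{\pi_{i+1}^2}}$ (respectively $\tfrac{\pi_i+\pi_{i+1}}{\sqrt{\pi_i^2}+\sqrt{\pi_{i+1}^2}}$ when $a_ia_{i+1}=0$) appearing in the definition of $Q_{s_i}$. On the weight subspace $V(a_1,\dots,a_n)$ the square roots $\sqrt{\pi_j^2}$ become the scalars $\sqrt{a_j}$, well defined and nonnegative by the preceding theorem, and since the $\pi_j$ pairwise anticommute one checks that this factor squares to a nonzero scalar there; thus it plays exactly the role of $p_i-p_{i+1}$ in the Clifford case. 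The two formulas for $Q_{s_i}$ correspond precisely to the possibilities $a_ia_{i+1}\neq0$ and $a_ia_{i+1}=0$ forced by the description of $\SSpec(n)$ (at most one of $a_i,a_{i+1}$ can vanish, since $a_i\neq a_{i+1}$ by Corollary~\ref{Young}).

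First I would record the intertwining relations of $Q_{s_i}$ with the squares $\pi_j^2$. Using that the $\pi_j^2$ pairwise commute, that each $\pi_i$ commutes with $\pi_{i+1}^2$ (a consequence of the pairwise anticommutation of the $\pi_j$), and the relations of the lemma on the elements $F_i$ --- in particular $(\pi_i-\pi_{i+1})\tau_i=\tau_i(\pi_i-\pi_{i+1})$ --- I would verify $\pi_i^2Q_{s_i}=Q_{s_i}\pi_{i+1}^2$, $\pi_{i+1}^2Q_{s_i}=Q_{s_i}\pi_i^2$, and $\pi_j^2Q_{s_i}=Q_{s_i}\pi_j^2$ for $j\neq i,i+1$. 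These show that $Q_{s_i}$ carries $V^\alpha_T$ into $V^\alpha_{s_iT}$, and that it is nondegenerate there exactly when $a_i+a_{i+1}\neq(a_i-a_{i+1})^2$, the normalizing factor being invertible on the relevant subspace while the intertwiner $\tau_i-\tfrac{\pi_i-\pi_{i+1}}{\pi_i^2-\pi_{i+1}^2}$ vanishes on it precisely in the excluded case by Corollary~\ref{Young}. The commutation formulas (i)--(iii) then follow as in the preceding proof: writing $\tau_i=\bigl(\tau_i-\tfrac{\pi_i-\pi_{i+1}}{\pi_i^2-\pi_{i+1}^2}\bigr)+\tfrac{\pi_i-\pi_{i+1}}{\pi_i^2-\pi_{i+1}^2}$ splits $\tau_iQ_T$ into a diagonal term $\tfrac{\pi_i-\pi_{i+1}}{a_i-a_{i+1}}Q_T$ and an off-diagonal term which, using $Q_{s_iT}=Q_{s_i}Q_T$ when $l(s_iT)>l(T)$ and Lemma~\ref{IR}, produces the stated coefficient of $Q_{s_iT}$; the case $l(s_iT)<l(T)$ within each of (ii) and (iii) is obtained from the corresponding case $l(s_iT)>l(T)$ exactly as assertion (iii) was deduced from (ii) before.

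The main obstacle is the independence of $Q_T$ from the chosen reduced decomposition of $s$, which (unlike for the intrinsically defined $P_T$) is a genuine assertion here. This reduces to showing that, on the family of admissible weight subspaces, the operators $Q_{s_i}$ satisfy the braid relation $Q_{s_i}Q_{s_{i+1}}Q_{s_i}=Q_{s_{i+1}}Q_{s_i}Q_{s_{i+1}}$ together with $Q_{s_i}Q_{s_j}=Q_{s_j}Q_{s_i}$ for $|i-j|>1$. The far-commutation case is immediate from $\tau_i\tau_j=-\tau_j\tau_i$ and the intertwining relations above; the braid case is the delicate point. I would verify it inside the subalgebra generated by $\tau_i,\tau_{i+1}$ and the relevant $\pi^2$'s, expanding both triple products on a fixed three-term weight subspace and reducing, after the defining relation $\tau_i\tau_{i+1}\tau_i=\tau_{i+1}\tau_i\tau_{i+1}$ is applied, to an identity among the scalar normalizations $\sqrt{a_i},\sqrt{a_{i+1}},\sqrt{a_{i+2}}$. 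The careful bookkeeping of these square roots --- especially because a braid move can pass between the two case-types $a_ja_{j+1}=0$ and $a_ja_{j+1}\neq0$ --- is where the real work lies; once the scalars are matched, the identity follows from the braid relation of $\frak A_n$.
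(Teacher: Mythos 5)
Your proposal is correct and follows essentially the same route as the paper: the paper likewise reduces the independence of $Q_T$ from the reduced decomposition to the braid relation $Q_{s_i}Q_{s_{i+1}}Q_{s_i}=Q_{s_{i+1}}Q_{s_i}Q_{s_{i+1}}$, verified ``by directly enumerating all possible cases,'' and then declares that the rest follows the scheme of the preceding theorem (intertwining with the $\pi_j^2$, the splitting of $\tau_i$ into diagonal and off-diagonal parts, and Lemma~\ref{IR}). Your additional attention to the far commutation $Q_{s_i}Q_{s_j}=Q_{s_j}Q_{s_i}$ for $|i-j|>1$ and to braid moves crossing between the cases $a_ja_{j+1}=0$ and $a_ja_{j+1}\ne0$ only makes explicit what the paper's case enumeration implicitly covers.
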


\begin{proof} First let us check that $Q_{T}$ does not depend on the choice
of a reduced decomposition. For this it suffices to check the equality
$$
Q_{s_{i}}Q_{s_{i+1}}Q_{s_{i}}=Q_{s_{i+1}}Q_{s_{i}}Q_{s_{i+1}},
$$
which can be done by directly enumerating all possible cases. Then the proof
follows the scheme of the previous theorem.
\end{proof}

Partially supported by  EPSRC (grant EP/E004008/1).

\end{document}